\begin{document}

\makeatletter
\def \le {\leqslant}
\def \ge {\geqslant}
\def \leq {\leqslant}
\def \geq {\geqslant}
\def\e{\alpha}
\def\tend{\rightarrow}
\def\R{\mathbb R}
\def\S{\mathbb S}
\def\Z{\mathbb Z}
\def\N{\mathbb N}
\def\C{\mathcal C}
\def\D{\mathcal D}
\def\T{\mathcal T}
\def\E{\mathcal E}
\def\s{\sigma}
\def\k{\kappa}
\def \a{\beta}
 \def\be{\beta}
\def\t{\theta}
\def\l{\ell}
\def \M {\mathcal{M}}
\def\L{\bm{A}_{\g}}
\def\O{\Omega}
\def\g{\gamma}
\def \ds {\displaystyle}
\def\G{\Gamma}
\def \mM {\mathfrak{m}}
\def\f{\varphi}
\def\o{\omega}
\def \d {\mathrm{d}}
\def \lm {\bm{m}}
\def \lM {\mathds{M}}
\def \lD {\mathds{D}}
\def\U{\Upsilon}
\def\z{\zeta}
\def \Q {\mathcal{Q}}
\def\over{\bm}
\def\b{\backslash}
\def\fet{f_{\ast}}
\def \get{g_{\ast}}
\def\fprim{f^{\prime}}
\def\fprimet{f^{\prime}_\ast }
\def\vet{v_{\ast}}
\def  \pa {\partial}
\def \var {\dd}
\def\vprim{v^{\prime}}
\def\vprimet{v^{\prime}_{\ast}}
\def\grad{\nabla}
\def\Log{\textrm{Log }}
\newtheorem{theo}{Theorem}[section]
\newtheorem{prop}[theo]{Proposition}
\newtheorem{cor}[theo]{Corollary}
\newtheorem{lem}[theo]{Lemma}
\newtheorem{hyp}[theo]{Assumptions}
\newtheorem{defi}[theo]{Definition}
\newtheorem{rmq}[theo]{Remark}
\DeclarePairedDelimiter\lnorm{\llbracket}{\rrbracket}		

\def \dd {\bm{\varepsilon}}

\def \ind {\mathbf{1}}
\numberwithin{equation}{section}

\title[ Landau-Fermi-Dirac equation]{About the use of entropy production for the Landau-Fermi-Dirac equation}

\author{R. Alonso}
%$^{1,2}$}
\address{$^1$ Texas A\&M University at Qatar, Science Department, Education City, Doha, Qatar. \newline
\indent $^2$ Departamento de Matem\'atica, PUC-Rio, Rio de Janeiro, Brasil.} \email{ricardo.alonso@qatar.tamu.edu}

\author{V. Bagland}
\address{Universit\'{e} Clermont Auvergne, LMBP, UMR 6620 - CNRS,  Campus des C\'ezeaux, 3, place Vasarely, TSA 60026, CS 60026, F-63178 Aubi\`ere Cedex, France.}\email{Veronique.Bagland@math.univ-bpclermont.fr}

\author{L. Desvillettes}
\address{Universit\'{e} de Paris and Sorbonne Universit\'e, CNRS, Institut de Math\'{e}matiques de Jussieu-Paris Rive Gauche,  F-75013, Paris, France} \email{desvillettes@math.univ-paris-diderot.fr}

\author{B.  Lods}
\address{Universit\`{a} degli
Studi di Torino \& Collegio Carlo Alberto, Department of Economics, Social Sciences, Applied Mathematics and Statistics ``ESOMAS'', Corso Unione Sovietica, 218/bis, 10134 Torino, Italy.}\email{bertrand.lods@unito.it}

\maketitle

\begin{abstract}
In this paper, we present new estimates for the entropy dissipation of the Landau-Fermi-Dirac equation (with hard or moderately soft potentials) in terms of a weighted relative Fisher information adapted to this equation.  Such estimates are used for studying the large time behaviour of the equation, as well as for providing new a priori estimates (in the soft potential case).  An important feature of such estimates is that they are uniform with respect to the quantum parameter.   Consequently, the same estimations are recovered for the classical limit, that is the Landau equation.

\noindent \textbf{Keywords:} Landau-Fermi-Dirac equation, Landau equation, Quantum kinetic models, Entropy, Fisher Information.
 
\end{abstract}

\tableofcontents

\section{Introduction}

Entropy production estimates have been significantly used in recent years for the study of partial differential equations.  Such estimates arise in many contexts for different purposes: 
\begin{itemize}
\item Entropy estimates contribute to new \emph{a priori estimates}. This is the case in the kinetic theory of gases where the famous $H$-Theorem of Boltzmann is the cornerstone in many studies on the topic and one of the essential bricks in the construction of \textsc{Di Perna-Lions} renormalized solutions \cite{DPL}. That entropy yields new natural \emph{a priori} estimates is also well-understood in the study of other equations,
 and the Boltzmann entropy $\int f\log f$ appeared in this context in the fundamental work \cite{nash} on parabolic equations. We refer the reader to \cite{jungel} for an overview of entropy methods for diffusive equations.
\item Entropy production estimates also arise naturally in the study of long time behaviour of solutions to evolution problems. Indeed, entropy acts in this case as a natural \emph{Lyapunov functional} which brings the system towards its equilibrium state (which is a minimizer of the entropy) through some LaSalle's invariance principle. In this context, functional inequalities linking the entropy to the entropy production are a fundamental tool to \emph{quantify} the rate of convergence. In the study of kinetic theory, this has been understood since the celebrated \emph{Cercignani's conjecture} for Boltzmann equation \cite{cerc, vilcerc, DMouVil} and the pioneering works \cite{carlen,carlen1}. Such ideas have been applied efficiently also in various other frameworks, for the study of reaction-diffusion systems \cite{fellner}, parabolic equations \cite{To}, coagulation and fragmentation processes \cite{aizenman,canizo} and in various problems in mathematical biology \cite{perthame, michel}, just to mention a few. 
\end{itemize}

In the kinetic framework, entropy and entropy production have been thoroughly studied for the Boltzmann equation \cite{vilcerc,DeVi3} as well as for the Landau equation. This latter equation arises in the modelling of plasma and can also be derived from the Boltzmann equation in the so-called grazing collision limit. For this model, the equivalent of Cercignani's conjecture was proven to hold first in the  special case of the so-called Maxwell molecules in \cite{DeVi2}, together with weaker versions of the functional inequality linking the entropy to its entropy production for hard potentials. This kind of functional inequalities has been then extended to cover the  physically relevant case of Coulomb interactions in \cite{CDH} using techniques from \cite{DesvJFA}. 
\medskip

In a recent contribution \cite{ABL}, the entropy method has been applied to the study of the long time behaviour of solutions to the Landau-Fermi-Dirac equation with \emph{hard potentials}. Some non optimal functional inequalities linking the entropy production to the associated entropy have been obtained therein which, when combined with a careful spectral analysis, yield an optimal (exponential) rate of convergence to equilibrium. It is the purpose of this paper to provide a systematic study of the entropy and entropy production functional for the spatially homogeneous Landau-Fermi-Dirac equation, extending  the results obtained in \cite{ABL}  and complementing them with a study of the \emph{soft potentials} case. The results of the present contribution in that context will then be applied to the long-time behaviour of solutions to the Landau-Fermi-Dirac equation with moderately soft potentials in the forthcoming work \cite{ABDL-soft}.

\subsection{The model} 

The Landau-Fermi-Dirac (LFD) equation models the time evolution of a particle gas satisfying Pauli's exclusion principle in the Landau's grazing limit regime and reads, in the homogeneous setting, as
\begin{equation}\label{LFDeq}
\partial_{t} f(t,v) =  \Q_{\dd}(f)(t,v),\qquad (t,v)\in (0,\infty)\times\mathbb{R}^{3}\,, \qquad f(0)=f_{\mathrm{in}}\,,
\end{equation}
where the collision operator is given by a modification of the Landau operator which includes Pauli's exclusion principle:
\begin{equation*}
\Q_{\dd}(f)(v)= {\grad}_v \cdot \int_{\R^3} \Psi(v-\vet) \, \Pi(v-\vet) 
\Big\{ \fet (1- \dd \fet) \grad f - f (1- \dd f) {\grad f}_\ast \Big\}
\, \d\vet\,.
\end{equation*}
We use here the standard shorthand $f=f(t,v)$ and $\fet=f(t,\vet)$.  The matrix-valued function $z \mapsto \Pi(z)$ denotes the 
orthogonal projection on $(\R z)^\perp$, 
$$\Pi_{ij}(z)=\delta_{ij}-\frac{z_i z_j}{|z|^2}, \qquad 1\leq i , j\leq 3\,,$$
and  $\Psi(z)=|z|^{2+\g}$ is the kinetic potential.  The choice $\Psi(z)=|z|^{2+\g}$ corresponds 
to inverse power law potentials.    We point out that the Pauli exclusion 
principle implies that a solution to \eqref{LFDeq} must satisfy the \emph{a priori} bound 
\begin{equation}\label{eq:Linf}
0\leq f(t,v)\leq \dd^{-1},
\end{equation}
where the \textit{quantum parameter} 
$$\dd:= \frac{(2\pi\hslash)^{3}}{m^{3}\beta} >0$$ 
depends on the reduced Planck constant $\hslash \approx 1.054\times10^{-34} \mathrm{m}^{2}\mathrm{kg\,s}^{-1}$, the mass $m$, and the statistical weight $\beta$ of the particles species, see \cite[Chapter 17]{chapman}.  Recall that the statistical weight is the number of independent quantum states in which the particle can have the same internal energy.  For example, for electrons, $\beta=2$ corresponding to the two possible electron spin values.  In the case of electrons $m\approx 9.1\times10^{-31}$ kg, and therefore, $\dd \approx 1.93\times10^{-10}\ll 1$. The parameter $\dd$ encapsulates the quantum effects of the model,  the case $\dd=0$ corresponding to the classical Landau equation as studied in \cite{DeVi2}. 

The above equation shares many properties with the classical Landau equation, corresponding to non quantum particles:
\begin{equation}\label{eq:landau}
\partial_{t}f=\Q_{0}(f)={\grad}_v \cdot \int_{\R^3} \Psi(v-\vet) \, \Pi(v-\vet) 
\Big\{ \fet \grad f - f {\grad f}_\ast \Big\}
\, \d\vet\ ,
\end{equation}
which corresponds to the case $\dd=0.$ As mentioned earlier, Eq. \eqref{eq:landau} is a fundamental model of kinetic theory for plasmas and received considerable attention in the past decades, cf. for example \cite{DeVi,DeVi2,CDH,kleb}.

{Up to our knowledge, the mathematical study of \eqref{LFDeq} has been restricted to the case of \emph{hard potentials} (or Maxwell molecules) only, i.e. for $\gamma \in [0,1]$.} 
We refer to \cite{ABL,bag1} for a  discussion of the model, its physical relevance and the properties of the solutions {for $\g \in [0,1]$}. We point out here that the study of the Cauchy problem for \eqref{LFDeq} has been performed in \cite{bag1}, and the careful study of the steady states is given in \cite{bag2}. In a recent contribution \cite{ABL}, three of the authors of the present paper discussed both the regularity and the long time asymptotics of the solution to \eqref{LFDeq}. {The analysis of \eqref{LFDeq} for moderately soft potentials, corresponding to $-2 < \gamma <0$, has been initiated by the authors of the present paper in \cite{ABDL-soft}}.

\subsection{The role of entropy} In the classical context (corresponding to $\dd=0$), the Boltzmann entropy
$$H(f)=\int_{\R^{3}}f\log f\d v$$
is a Lyapunov functional for \eqref{eq:landau}, i.e. 
$$\dfrac{\d}{\d t}H(f(t)) \leq 0 \qquad {\hbox{ for }} \qquad t \geq0, $$
if $f(t)$ is a suitable solution to \eqref{eq:landau}. Moreover,  given any constant $\bm{u} \in \R^{3}$, under the constraints
\begin{equation}\label{eq:Gibbs}
\int_{\R^{3}}f(v) \d v = \varrho, \qquad \int_{\R^{3}}f(v)\,|v - \bm{u}|^{2}\d v\leq 3\varrho\,E, \qquad f \geq 0,\end{equation}
the functional $H(f)$ reaches its unique minimum if $f$
is the Maxwellian distribution
$$M(v) :=\frac{\varrho}{(2\pi\,E)^{\frac{3}{2}}}\exp\left(-\frac{|v-\bm{u}|^{2}}{2E}\right), \qquad v \in \R^{3}, $$
for which the above constraints are satisfied with equality sign. Of course, such a Maxwellian distribution is the only solution to 
$$\Q_{0}(M)=0,$$
satisfying \eqref{eq:Gibbs} with equality sign. 

In the quantum case, for $\dd >0$, one introduces the Fermi-Dirac entropy:
\begin{equation}\label{eq:FDentro}
\mathcal{S}_{\dd}(f)= - \frac{1}{\dd}\int_{\R^3} \Big[\dd f\log(\dd f)+(1-\dd f)\log (1-\dd f)\Big] \, \d v\,,
\end{equation}
i.e.   $\mathcal{S}_{\dd}(f)= -\dd^{-1}\left(H(\dd\,f)+H(1-\dd f)\right)$, 
well-defined for any $0 \leq f\leq \dd^{-1}.$ One can then show that $-\mathcal{S}_{\dd}(f)$ is a Lyapunov function for \eqref{LFDeq}, i.e.
$$-\dfrac{\d}{\d t}\mathcal{S}_{\dd}(f(t))=:-\mathscr{D}_{\dd}(f(t)) \leq 0$$
for any suitable solution to \eqref{LFDeq}. For $\dd \geq 0$, the entropy production associated to the above Landau-Fermi-Dirac operator is defined as
\begin{equation}\label{eq:product}
\mathscr{D}_{\dd}(g):=\frac{1}{2}\int_{\R^{3}\times\R^{3}}\Psi(v-\vet)\bm{\Xi}_{\dd}[g](v,\vet)\d v\d\vet\,,\qquad \Psi(z)=|z|^{\g+2}\,,
\end{equation}
for any smooth function $0 < g < \dd^{-1}$, with 
\begin{equation}\label{eq:Xidd}\begin{split}
\bm{\Xi}_{\dd}[g](v,\vet)&=\Pi(v-\vet)\big(g_{\ast}(1-\dd g_{\ast})\nabla g - g(1-\dd g)\nabla g_{\ast}\big)\left(\frac{\nabla g}{g(1-\dd g)}-\frac{\nabla g_{\ast}}{g_{\ast}(1-\dd g_{\ast})}\right)\\
&=gg_{\ast}(1-\dd g)(1-\dd g_{\ast})\left|\Pi(v-\vet)\left(\frac{\nabla g}{g(1-\dd g)}-\frac{\nabla g_{\ast}}{g_{\ast}(1-\dd g_{\ast})}\right)\right|^{2} \geq 0\,.
\end{split}
\end{equation}
In contrast to what happens in the classical case, here the steady solutions to \eqref{LFDeq} satisfying \eqref{eq:Gibbs} are of two different kinds: first, $\Q_{\dd}(\M_{\dd})=0$ if $\M_{\dd}$ is the following Fermi-Dirac statistics:
\begin{equation}\label{eq:FDS}
\M_{\dd}(v) :=\frac{a_{\dd}\exp(-b_{\dd}|v-\bm{u}|^{2})}{1+\dd\,a_{\dd}\exp(-b_{\dd}|v-\bm{u}|^{2})}=\frac{M_{\dd}}{1+\dd\,M_{\dd}},
\end{equation}
where the parameters $a_{\dd},b_{\dd}$ are semi-explicit and tuned so that $\M_{\dd}$ satisfies \eqref{eq:Gibbs} with equality sign. Second, the distribution
\begin{equation}\label{eq:dege}
F_{\dd}(v)=\dd^{-1}\ind_{B(\bm{u},R)}(v),
\end{equation}   
where $B(\bm{u},R)$ is the open ball in $\R^{3}$ centered at $\bm{u}$ with explicit radius $R:= \left(\dfrac{3\varrho\dd}{|\S^{2}|} \right)^{\frac{1}{3}}$,
also satisfies $\Q_{\dd}(F_{\dd})=0$ and \eqref{eq:Gibbs} (for $\dd$ small enough), where $|\S^{2}|$ is the volume of the unit sphere.  Such a degenerate stationary state, referred to as \emph{saturated Fermi-Dirac distribution},   can occur for very cold gases, where an explicit condition on the gas temperature can be found. Notice that, under the additional constraints:
$$0 \leq f \leq \dd^{-1},\qquad \left|\{v \in \R^{3}\;;\;0 < f(v) < \dd^{-1}\}\right| \neq 0, $$
the Fermi-Dirac statistics is the \emph{unique} stationary state (see \cite{bag2}). It is also a minimizer of the functional $-\mathcal{S}_{\dd}(f)$.

The coexistence of these two kinds of steady solutions has important consequences on the long time behaviour of solutions to \eqref{LFDeq}. Indeed, as long as finite energy solutions are not saturated, only the Fermi-Dirac statistics \eqref{eq:FDS} are possible limit points for the solution to \eqref{LFDeq}; yet, if the initial datum happens to be very close to the degenerate state \eqref{eq:dege}, one expects the convergence to be slowed down by such equilibrium regimes.  To illustrate this, let us summarize some of the results of our contribution \cite{ABL} dealing with $\g \in (0,1]$.
\begin{theo}\label{theo:conve} Consider {$0 < \g \leq 1$ and} $0\leq f_0\in L^{1}_{s_\gamma}(\R^{3})$  with $s_{\g}=\max\{\tfrac{3\gamma}{2}+2,4-\gamma\}$ satisfying \eqref{hypci}, and let $f=f(t,v)$ be a weak solution to \eqref{LFDeq} {as constructed in \cite{bag1}}. Then, 
\begin{enumerate}
\item $\lim_{t\to\infty}\|f(t)-\M_{\dd}\|_{L^{1}_{2}}=0$
where $\M_{\dd}$ is the Fermi-Dirac statistics with same mass, energy and momentum as $f_{0}$.
\item If moreover $f \in L^{2}_{k}(\R^{3})$ for $k$ large enough (explicit), then there exists  $\dd^{\ddagger} \in (0,1)$ such that for any $\dd \in (0,\dd^{\ddagger})$,
\begin{equation}\label{eq:mainestimate}
\|f(t)-\M_{\dd}\|_{L^{1}_{2}} \leq C\exp(-\lambda_{\dd}t), \qquad \forall\,t \geq 0\,,\end{equation}
where $\lambda_{\dd} >0$ is explicit, and $C>0$.
\end{enumerate}\end{theo}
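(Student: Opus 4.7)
The plan is to use the entropy--entropy production framework. For (1), I would start by integrating the $H$-theorem identity
$$\frac{\d}{\d t}\big(\mathcal{S}_{\dd}(\M_{\dd})-\mathcal{S}_{\dd}(f(t))\big)=-\mathscr{D}_{\dd}(f(t))\le 0,$$
which, together with $\mathcal{S}_{\dd}(\M_{\dd})-\mathcal{S}_{\dd}(f_{0})<\infty$, yields $\int_{0}^{\infty}\mathscr{D}_{\dd}(f(t))\,\d t<\infty$. Combining this with the propagation of $L^{1}_{s_{\gamma}}$ moments (the threshold $s_{\gamma}=\max\{\tfrac{3\gamma}{2}+2,4-\gamma\}$ being exactly what is needed to control the weight $|v-v_{\ast}|^{2+\gamma}$ in $\mathscr{D}_{\dd}$ and in the evolution of the energy), I can extract a sequence $t_{n}\to\infty$ and a limit $f_{\infty}$ such that $f(t_{n})\rightharpoonup f_{\infty}$ and $\mathscr{D}_{\dd}(f(t_{n}))\to 0$. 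The non-negativity and integrand structure of $\bm{\Xi}_{\dd}$ in \eqref{eq:Xidd} force any such $f_{\infty}$ to satisfy $\Q_{\dd}(f_{\infty})=0$; by the characterization of steady states recalled above, $f_{\infty}$ is either a Fermi-Dirac statistics \eqref{eq:FDS} or the saturated distribution \eqref{eq:dege}. The bound \eqref{hypci} on $f_{0}$ is precisely designed to rule out the saturated profile along the flow, and conservation of mass, momentum and energy then pins down $f_{\infty}=\M_{\dd}$. Upgrading weak convergence to $L^{1}_{2}$ is done by uniform integrability, using the propagated moments of order $>2$.

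For (2), the strategy is to combine a non-optimal functional inequality linking $\mathscr{D}_{\dd}$ and the relative entropy with a spectral-gap analysis around $\M_{\dd}$. I would first establish an estimate of the form
\begin{equation*}
\mathscr{D}_{\dd}(f)\ge K(f)\,\big[\mathcal{S}_{\dd}(\M_{\dd})-\mathcal{S}_{\dd}(f)\big]^{\theta}
\end{equation*}
for some $\theta\ge 1$, with $K(f)$ controlled from below by quantities that are propagated uniformly in time (mass, energy, moments, and, crucially, the $L^{2}_{k}$ norm needed to prevent concentration). This yields an algebraic rate of entropy decay, hence $L^{1}$ convergence via a Csisz\'ar--Kullback-type inequality adapted to $\mathcal{S}_{\dd}$. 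This algebraic rate drives $f(t)$ into an arbitrarily small neighborhood of $\M_{\dd}$ in a well-chosen norm.

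Once inside this neighborhood, I would linearize the operator $\Q_{\dd}$ around $\M_{\dd}$. The linearized operator $\mathcal{L}_{\dd}$ is self-adjoint in the weighted Hilbert space dictated by the Fermi-Dirac Hessian of $-\mathcal{S}_{\dd}$, and its kernel is spanned by $\{1,v_{i},|v|^{2}\}\,\M_{\dd}(1-\dd\M_{\dd})$. A Weyl/compactness argument shows a genuine spectral gap $\lambda_{\dd}>0$ on the orthogonal complement, and the threshold $\dd^{\ddagger}$ arises from the condition ensuring $\M_{\dd}$ is non-degenerate and $\lambda_{\dd}$ remains bounded away from $0$. Gluing the two regimes (entropy-based decay before entering the neighborhood, spectral decay after) via a bootstrap yields \eqref{eq:mainestimate} for all $t\ge 0$ at the linear rate $\lambda_{\dd}$.

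The main obstacle, in my view, is twofold: first, proving the functional inequality for $\mathscr{D}_{\dd}$ with a constant that does not degenerate as the solution approaches equilibrium — this is the Landau-Fermi-Dirac analogue of the (non-optimal) Cercignani inequality and requires exploiting the structure in \eqref{eq:Xidd} to compare $\mathscr{D}_{\dd}(f)$ to the classical Fisher-type dissipation despite the Pauli factor $(1-\dd f)$; second, ruling out the saturated state \eqref{eq:dege} uniformly in time, which forces the quantitative constraint \eqref{hypci} and the smallness condition $\dd<\dd^{\ddagger}$ so that $\M_{\dd}$ is strictly bounded away from the ceiling $\dd^{-1}$ and the linearized problem keeps a spectral gap uniform on the relevant range.
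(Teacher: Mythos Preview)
This theorem is not proved in the present paper: it is quoted as a summary of the results of \cite{ABL}, and the paper only describes the strategy used there. What the paper says about that strategy --- a non-optimal entropy/entropy-production inequality for $\mathscr{D}_{\dd}$ yielding an initial (algebraic) decay, combined with a spectral-gap analysis of the linearized operator around $\M_{\dd}$ to upgrade to the exponential rate $\lambda_{\dd}$ --- is exactly the two-stage scheme you propose for part~(2), so your outline is on target and matches the original approach; your sketch for part~(1) (integrability of $\mathscr{D}_{\dd}$, compactness, identification of the limit via the characterization of steady states, tightness from higher moments) is likewise the standard route used in \cite{ABL,bag1}.
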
 

Here above and in all the sequel,
\begin{multline*}
L^{1}_{s}(\R^{3}) :=\{f \in L^{1}(\R^{3})\;;\;\lm_{s}(f) < \infty\}, \\
L^{2}_{s}(\R^{3}):=\left\{f\;;\;\|f\|_{L^{2}_{s}}^{2}:=\int_{\R^{3}}|f(v)|^{2}\langle v\rangle^{2s}\d v< \infty\right\}, \qquad s \in \R\end{multline*}
with
$$\lm_{s}(f) :=\int_{\R^{3}}\langle v\rangle^{s}\,|f(v)|\d v, \qquad \langle v\rangle:=\left(1+|v|^{2}\right)^{\frac{1}{2}}, \qquad s \in \R.$$

The optimal convergence given in \eqref{eq:mainestimate} is based upon a careful spectral analysis of the linearized Landau-Fermi-Dirac operator around $\M_{\dd}$, and  $\lambda_{\dd} >0$ is its spectral gap.  The condition on the quantum parameter $\dd\in (0,\dd^{\ddagger})$ is interpreted as a \textit{non-saturation} condition of the dynamics (and not as a smallness condition).  Indeed, if we call $\dd_{\mathrm{sat}}$ the limiting quantum parameter that saturates the initial data, the results in \cite{ABL} are valid in the range $0<\dd^{\ddagger}<c\,\dd_{\mathrm{sat}}$ for some explicit universal $c<1$.  Although convergence to Fermi-Dirac statistics still happens up to $\dd^{\ddagger}<\dd_{\mathrm{sat}}$, it is an open problem to prove such convergence with explicit rates.
  
The main scope of the present paper is to provide a direct approach to the trend to equilibrium for the evolution of the Landau-Fermi-Dirac dynamic where the convergence is obtained \textit{in terms of the relative entropy only}.  This requires sharp functional estimates.

\smallskip
\noindent
Let us introduce the \emph{relative Fermi-Dirac entropy} defined, for any nonnegative $f,\,g \in L^1_2(\R^3)$ with $0 \leq f \leq \dd^{-1}$ and  $0 \leq g \leq \dd^{-1}$, by
$$\mathcal{H}_{\dd}(f|g)=-\mathcal{S}_{\dd}(f)+\mathcal{S}_{\dd}(g).$$
The definition is similar to the usual \emph{Boltzmann relative entropy}
$$\mathcal{H}_{0}(f|g)=H(f)-H(g)=\int_{\R^{3}}f\log f \d v -\int_{\R^{3}}g\log g\, \d v ,$$
defined for nonnegative $f,g \in L^{1}(\R^{3})$ with $\int_{\R^{3}}f(v)\d v=\int_{\R^{3}}g(v)\d v$, and used in particular in the study of \eqref{eq:landau}. 
\subsection{Main results} Before stating the main results of the present contribution, we introduce the baseline class of functions to which the initial distribution $f_{\mathrm{in}}$ is associated to:
\begin{defi} \label{defi:fin}
Fix $\dd_{0}>0$ and a nonnegative $f_{\mathrm{in}}\in L^{1}_{2}(\R^{3})$ satisfying 
\begin{equation}\label{hypci}
0<\|f_{\mathrm{in}}\|_{\infty} =:\dd_{0}^{-1} <\infty\qquad \text{ and }  \qquad S_{0}:=\mathcal{S}_{\dd_{0}}(f_{\mathrm{in}})>0, \qquad |H(f_{\mathrm{in}})| < \infty .
\end{equation}
For any $\dd \in [0,\dd_{0}]$, we say that $f \in \mathcal{Y}_{\dd}(f_{\mathrm{in}})$ if $f\in L^{1}_{2}(\R^{3})$ satisfies $0\leq f\leq \dd^{-1}$,
  \begin{equation}\label{eq0}
\int_{\R^{3}}f(v)\left(\begin{array}{c}1 \\v \\ |v|^{2}\end{array}\right)\d v=\int_{\R^{3}}f_{\mathrm{in}}(v)\left(\begin{array}{c}1 \\v \\ |v|^{2}\end{array}\right)\d v=:\left(\begin{array}{c}\varrho_{\mathrm{in}} \\ \varrho_{\mathrm{in}}u_{\mathrm{in}} \\ 3\varrho_{\mathrm{in}}E_{\mathrm{in}}+\varrho_{\mathrm{in}}|u_{\mathrm{in}}|^2\end{array}\right),\end{equation}
   and $\mathcal{S}_{\dd}(f) \geq \mathcal{S}_{\dd}(f_{\mathrm{in}}).$
   \end{defi}

 {\begin{rmq}\label{rmq:Y0} The definition above can be naturally extended to cover the case $\dd=0$. Of course for this case we also assume $\dd_{0}=0$ and replace \eqref{hypci} simply with
$$f_{\mathrm{in}} \in L^{1}_{2}(\R^{3}), \qquad  \left|H(f_{\mathrm{in}})\right| < \infty ,$$
and we say that $f \in \mathcal{Y}_{0}(f_{\mathrm{in}})$ if $f \in L^{1}_{2}(\R^{3})$ is nonnegative and satisfies \eqref{eq0} together with $H(f) \le H(f_{\mathrm{in}})$. \end{rmq}
Up to replacing $f$ with 
$$\widetilde{f}(v)=\dfrac{E_{\mathrm{in}}^{\frac{3}{2}}}{\varrho_{\mathrm{in}}}f\left(\sqrt{E_{\mathrm{in}}}v+u_{\mathrm{in}}\right), \qquad v \in \R^{3}$$
  there is no loss in generality in assuming 
  \begin{equation}\label{eq:Mass}
\varrho_{\mathrm{in}}=E_{\mathrm{in}}=1, \qquad u_{\mathrm{in}}=0.\end{equation}
This assumption will be made in \emph{all the sequel} and $\M_{\dd}$ will always denote the Fermi-Dirac statistics corresponding to this normalization.
Solutions to the Landau-Fermi-Dirac (also called LFD) equation constructed in \cite{bag1} are known to belong (for all $t \ge 0$) to the class $\mathcal{Y}_{\dd}(f_{\mathrm{in}})$ if the initial datum $f_{\mathrm{in}}$ satisfies conditions \eqref{hypci}-\eqref{eq0}. This is the reason why we focus on the study of the entropy production functional $\mathscr{D}_{\dd}(g)$ for functions $g$ belonging to such a class.  More specifically, $f_{\mathrm{in}}$ will always be assumed to satisfy \eqref{hypci}-\eqref{eq0}.

The analysis developed in this manuscript allows us to revisit the contribution of \cite{ABL}, devoted to hard potentials, and to improve it in several aspects. More precisely, we provide a new entropy-entropy production estimate based on works of the third author which allows to strengthen and simplify several results given in \cite{ABL}. Namely, we prove the following inequality.
\begin{theo}\label{theo:main-entro} Assume that $g$ belongs to the class $\mathcal{Y}_{\dd}(f_{\mathrm{in}})$ together with the normalization  \eqref{eq:Mass} and assume moreover that
there exists $\kappa_{0} >0$ such that
\begin{equation}\label{infim}
\inf_{v \in \R^{3}}\left(1-\dd g(v)\right) \geq \kappa_{0}.\end{equation}
Then for $\g \geq 0$, 
$$\mathscr{D}_{\dd}(g) \geq 2\lambda(g)\left[b_{\dd}-\frac{12\dd^{2}}{\kappa_{0}^{4}}\max(\|g\|_{\infty}^{2},\|\M_{\dd}\|_{\infty}^{2})\right]
\mathcal{H}_{\dd}(g|\M_{\dd}), $$
where $\lambda(g) >0$ is given by 
{\begin{equation}\label{eq:lambdag}
\frac{1}{\lambda(g)}:= {510}\frac{\bm{e}_{\g}^{3}\,}{\kappa_{0}^{2}}\,\max(1,\bm{B}_{\g})\,\max\left(1,\lm_{2+\gamma}(g)\right)\mathscr{I}_{\gamma}(g)\,,
\end{equation}}
with
$$\mathscr{I}_{\gamma}(g)=\sup_{v \in \R^{3}}\langle v\rangle^{\gamma}\int_{\R^{3}}g(w)|w-v|^{-\gamma}\langle w\rangle^{2}\d w ,$$
and
$$\frac{1}{\bm{B}_{\g}}:=\min_{i\neq j}\inf_{\sigma \in \S^{1}}\int_{\R^{3}}\left|\sigma_{1}\frac{v_{i}}{\langle v\rangle}-\sigma_{2}\frac{v_{j}}{\langle v\rangle}\right|^{2}g(v)\d v, \qquad \frac{1}{\bm{e}_{\g}} :=\min_{i}\frac{1}{3}\int_{\R^{3}} g(v)\,v_{i}^2\,\d v\,. $$
Recall that $\M_{\dd}$ and $b_{\dd}$ were introduced in \eqref{eq:FDS}.
\end{theo}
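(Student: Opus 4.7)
My plan is to split the desired inequality into two independent functional estimates: a coercivity estimate for $\mathscr{D}_\dd(g)$ in terms of a Fermi-Dirac weighted Fisher information, followed by a logarithmic Sobolev type inequality relating that Fisher information to $\mathcal{H}_\dd(g|\M_\dd)$. The key preliminary observation is that $\nabla \log(\M_\dd/(1-\dd\M_\dd)) = \nabla \log M_\dd = -2\, b_\dd\, v$ is parallel to $v$, so $\Pi(v-\vet)$ annihilates the difference $-2\,b_\dd(v-\vet)$. Introducing the ``Fermi-Dirac score''
\begin{equation*}
\bm{X}(v) := \frac{\nabla g(v)}{g(v)(1-\dd g(v))} + 2\, b_\dd\, v,
\end{equation*}
the entropy production may be rewritten without change of value as
\begin{equation*}
\mathscr{D}_\dd(g) = \frac{1}{2}\iint |v-\vet|^{\g+2}\, g\, g_\ast\, (1-\dd g)(1-\dd g_\ast)\, |\Pi(v-\vet)(\bm{X}(v) - \bm{X}(\vet))|^2\, \d v\, \d\vet,
\end{equation*}
a form in which the deviation of $g$ from $\M_\dd$ is made explicit.

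\emph{Step I (coercivity).} Adapting the methodology of the third author for the classical Landau operator, and exploiting the pointwise bound $(1-\dd g)(1-\dd g_\ast) \geq \kappa_0^{2}$, I would establish
\begin{equation*}
\mathscr{D}_\dd(g) \geq \lambda(g) \int_{\R^3} g(v)(1-\dd g(v))\,|\bm{X}(v)|^{2}\, \d v.
\end{equation*}
The core of the argument is a quantitative coercivity lower bound for the non-negative matrix
\begin{equation*}
A(v) := \int_{\R^3} |v-\vet|^{\g+2}\,\Pi(v-\vet)\, g(\vet)(1-\dd g(\vet))\,\d\vet ,
\end{equation*}
whose smallest eigenvalue is controlled from below through the non-degeneracy quantities $\bm{e}_\g,\,\bm{B}_\g$ attached to $g$. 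A Cauchy--Schwarz manipulation that converts the bilinear expression in $\bm{X}(v)-\bm{X}(\vet)$ into a one-point quadratic expression in $\bm{X}(v)$ generates the remaining prefactors $\mathscr{I}_\g(g)$ and $\lm_{2+\g}(g)$. The assumption $\g \geq 0$ (hard potentials) makes the kinetic weight $|v-\vet|^{\g+2}$ easy to handle via $|v-\vet|^{\g} \leq 2^{\g/2}\langle v\rangle^{\g}\langle \vet\rangle^{\g}$, so no small-$|v-\vet|$ singularity needs to be controlled.

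\emph{Step II (modified log-Sobolev).} It remains to prove
\begin{equation*}
\int g(1-\dd g)|\bm{X}|^{2}\d v \geq 2\Bigl[\,b_\dd - \frac{12\dd^{2}}{\kappa_0^{4}}\max(\|g\|_\infty^{2},\|\M_\dd\|_\infty^{2})\,\Bigr]\mathcal{H}_\dd(g|\M_\dd).
\end{equation*}
The starting point is the Gaussian logarithmic Sobolev inequality for $M_\dd \propto e^{-b_\dd|v|^{2}}$, which supplies the clean prefactor $2\, b_\dd$ in the classical limit $\dd = 0$. One then expands the differences between the Fermi-Dirac Fisher information $\int g(1-\dd g)|\bm{X}|^2 \d v$ and the classical one $\int g\,|\nabla\log(g/M_\dd)|^2\,\d v$, and between $\mathcal{H}_\dd(g|\M_\dd)$ and $\mathcal{H}_0(g|M_\dd)$; each remainder is polynomial in $\dd$ times factors involving $g(1-\dd g)$, and is dominated using $\|g\|_\infty$, $\|\M_\dd\|_\infty$ and the lower bound $\kappa_0$, producing the subtractive correction $\frac{12\dd^{2}}{\kappa_0^{4}}\max(\|g\|_\infty^{2},\|\M_\dd\|_\infty^{2})$. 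Combining Steps I and II yields the stated inequality. I expect Step II to be the main obstacle: one must extract \emph{exactly} the prefactor $2\, b_\dd$ from a quantum expansion of the logarithmic Sobolev inequality, and bundle all Fermi-Dirac remainders into a single subtractive error expressible in the precise form claimed, which requires the algebraic use of the identity $\M_\dd/(1-\dd\M_\dd)=M_\dd$ together with careful convexity estimates on $h_\dd(x) = x\log x + (1-x)\log(1-x)$.
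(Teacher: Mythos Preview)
Your two-step strategy (coercivity of $\mathscr{D}_\dd$ in terms of a relative Fisher information, then a log-Sobolev step) matches the paper's architecture, but the way you split the work between the steps is different, and the split you propose contains a genuine gap.

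\textbf{Where the paper centers.} The paper does \emph{not} establish Step I for your equilibrium-centered score $\bm{X}=\nabla h+2b_\dd v$. Instead, the Desvillettes-type moment algebra (integrating the antisymmetric kernel $q_{ij}(v,w)$ against $g(w)$ and $w_i g(w)$) naturally produces the identity
\[
M_{ij}(v)=-a_i\,\partial_j h(v)+K v_j-L_j,\qquad K=\frac1\dd\int_{\R^3}\log(1-\dd g)\,\d v,
\]
and it is this \emph{$g$-dependent} coefficient $K$, not $-2b_\dd$, that emerges. The coercivity estimate the paper proves is therefore
\[
\int_{\R^3}\bigl|\nabla h(v)-Kv\bigr|^2 g(v)\,\d v\;\le\;\lambda(g)^{-1}\,\mathscr{D}_\dd(g),
\]
with exactly the $\lambda(g)$ of the statement. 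Your proposed shortcut of bounding the smallest eigenvalue of $A(v)$ and applying Cauchy--Schwarz to pass from $|\Pi(\bm X-\bm X_\ast)|^2$ to a one-point quadratic in $\bm X$ does not close: the cross term $\iint FF_\ast\,\Pi(v-\vet)\bm X(v)\cdot\bm X(\vet)$ has no reason to be nonpositive or small, and controlling it is precisely what the $M_{ij},N_{ij}$ machinery accomplishes---at the price of forcing the centering to be $Kv$ rather than $-2b_\dd v$.

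\textbf{Where the $\dd^2$ correction really comes from.} Because of the previous point, the subtractive term does \emph{not} arise from comparing Fermi--Dirac and classical Fisher informations or entropies, as you sketch in Step II. The paper uses the Fermi--Dirac log-Sobolev inequality $\int g|\nabla h+2b_\dd v|^2\,\d v\ge 2b_\dd\,\mathcal{H}_\dd(g|\M_\dd)$ directly (a known result for $1$-homogeneous entropies), with the clean constant $2b_\dd$ and no perturbation. The defect is purely the mismatch $K\neq -2b_\dd$: an exact algebraic identity gives
\[
\int g|\nabla h-Kv|^2\,\d v=\int g|\nabla h+2b_\dd v|^2\,\d v-3(K+2b_\dd)^2,
\]
and one then shows $|K+2b_\dd|\le 2\dd\kappa_0^{-2}\max(\|g\|_\infty,\|\M_\dd\|_\infty)\|g-\M_\dd\|_{L^1}$ by a Taylor-type estimate on $x\mapsto\log(1-x)+x$, followed by Csisz\'ar--Kullback. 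This is the mechanism producing the factor $\frac{12\dd^2}{\kappa_0^4}\max(\|g\|_\infty^2,\|\M_\dd\|_\infty^2)$, not a convexity analysis of $h_\dd(x)=x\log x+(1-x)\log(1-x)$.

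In summary: your Step I, centered at $-2b_\dd v$, cannot be carried out as described (the cross terms obstruct the Cauchy--Schwarz reduction), and the $\dd^2$ error you place in Step II actually belongs to the passage from $K$ to $-2b_\dd$ after Step I.
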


The condition \eqref{infim} is needed here to rule out the possibility that $g$ be too close to the degenerate state \eqref{eq:dege}. We already point out that solutions $f(t,v)$ to LFD equation \eqref{LFDeq} satisfy \eqref{infim} as soon as $t>0$. We refer to \cite{ABDL-soft,ABL} for the proof of this fact {in the range $\gamma\in(-2,0]$ and $\g \in (0,1]$ respectively}, under some conditions on $\dd<\dd_{\mathrm{sat}}$.   In fact, $\kappa_0$ is uniform w.r.t. time when $t \ge t_0>0$.

It is noteworthy to observe that Theorem \ref{theo:main-entro} is valid in the classical case $\dd=0$ {with  $g \in \mathcal{Y}_{0}(f_{\mathrm{in}})$ (as described in Remark \ref{rmq:Y0}). In this case, assumption \eqref{infim} is clearly satisfied and $b_{0}=\frac{1}{2}.$ This yields the following novel version of the results in \cite{DeVi2}, which can be seen as a variant of Cercignani's conjecture for the Landau equation with true hard potentials (this result could alternatively be obtained using {the logarithmic Sobolev inequality of Gross \cite[Chapter 2]{jungel}}  and Proposition 4 extracted from \cite{LDProcX}):
\begin{cor}\label{cor:LandauFI}
Assume that $g \in \mathcal{Y}_{0}(f_{\mathrm{in}})$ together with the normalization  \eqref{eq:Mass}. The Landau entropy production 
$$\mathscr{D}_{0}(g)=\frac{1}{2}\int_{\R^{6}}|v-\vet|^{\g+2}\,gg_{\ast}\left|\Pi(v-\vet)\left(\nabla \log g-\nabla \log g_{\ast}\right)\right|^{2}\d v\d \vet$$
satisfies, for $\gamma \geq0$,
$$\mathscr{D}_{0}(g) \geq \lambda(g)\,\int_{\R^{3}}g(v)\log \frac{g(v)}{\M_{0}(v)}\d v, \qquad \M_{0}(v)={\left(2\pi\right)}^{-\frac{3}{2}}\exp\left(-\frac{|v|^{2}}{2}\right), $$
where $\lambda(g) >0$ is given by \eqref{eq:lambdag}.
\end{cor}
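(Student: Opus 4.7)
The plan is to derive the corollary as the $\dd=0$ specialization of Theorem \ref{theo:main-entro}. First I would check that when $\dd=0$ every piece of the theorem collapses nicely. The non-saturation hypothesis \eqref{infim} reads $\inf_v(1-0\cdot g(v))=1$, so we may take $\kappa_{0}=1$; the correction term $\frac{12\dd^{2}}{\kappa_{0}^{4}}\max(\|g\|_{\infty}^{2},\|\M_{\dd}\|_{\infty}^{2})$ vanishes identically; and, because the normalization \eqref{eq:Mass} is imposed, the classical Maxwellian is $\M_{0}(v)=(2\pi)^{-3/2}\exp(-|v|^{2}/2)$, corresponding to $b_{0}=\tfrac12$. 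Substituting these values into the estimate of Theorem \ref{theo:main-entro} yields the prefactor $2\lambda(g)\,[\,b_{0}-0\,]=\lambda(g)$, matching the constant claimed in the corollary.

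Next I would identify the limit of the relative entropy functional. A direct Taylor expansion of
\[
-\mathcal{S}_{\dd}(h)=\frac{1}{\dd}\int_{\R^{3}}\bigl[\dd h\log(\dd h)+(1-\dd h)\log(1-\dd h)\bigr]\d v
\]
around $\dd=0$ yields $-\mathcal{S}_{\dd}(h)=\int h\log h\,\d v+\mathcal{O}(\dd)$ on functions with fixed mass, so that $\mathcal{H}_{\dd}(g|\M_{\dd})\to \mathcal{H}_{0}(g|\M_{0})=\int g\log(g/\M_{0})\,\d v$; alternatively, in the pure $\dd=0$ setting of Remark \ref{rmq:Y0} one simply reads off this identity from the definition of $\mathcal{H}_{0}$. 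Equally, the entropy production $\bm{\Xi}_{\dd}[g]$ collapses to $gg_{\ast}\,|\Pi(v-\vet)(\nabla\log g-\nabla\log g_{\ast})|^{2}$, which is the integrand appearing in the statement of $\mathscr{D}_{0}(g)$. Hence, assuming the proof of Theorem \ref{theo:main-entro} runs verbatim in the $\dd=0$ case (which it does, since every $\dd$-dependent estimate used there degenerates to a strictly simpler classical inequality when $\dd=0$), the corollary follows immediately.

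The only real ``obstacle'' is cosmetic: justifying that the arguments supporting Theorem \ref{theo:main-entro} are in fact compatible with the classical class $\mathcal{Y}_{0}(f_{\mathrm{in}})$ of Remark \ref{rmq:Y0}, in particular that the relative entropy term, all weighted moments appearing in $\lambda(g)$, and the lower bound \eqref{infim} survive intact at $\dd=0$. As mentioned in the excerpt, a completely independent proof of the corollary can also be obtained by combining Gross's logarithmic Sobolev inequality
\[
\mathcal{H}_{0}(g|\M_{0})\leq \tfrac{1}{2}\int_{\R^{3}} g(v)\,\bigl|\nabla\log\bigl(g(v)/\M_{0}(v)\bigr)\bigr|^{2}\,\d v
\]
with Proposition~4 of \cite{LDProcX}, which lower bounds $\mathscr{D}_{0}(g)$ by a multiple of the right-hand side above with constant essentially $\lambda(g)$; this gives an alternative route avoiding Theorem \ref{theo:main-entro} entirely. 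Either way, no new technical work beyond bookkeeping is required.
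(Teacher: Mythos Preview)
Your proposal is correct and follows precisely the paper's own argument: the paper simply remarks before stating the corollary that Theorem \ref{theo:main-entro} is valid at $\dd=0$, with \eqref{infim} trivially satisfied (so $\kappa_{0}=1$) and $b_{0}=\tfrac12$, which yields the constant $2\lambda(g)\cdot\tfrac12=\lambda(g)$; the alternative route via Gross's logarithmic Sobolev inequality combined with Proposition~4 of \cite{LDProcX} is also mentioned there in the same parenthetical spirit as in your write-up. No additional work beyond this specialization is performed in the paper.
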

{For suitable solutions to Landau equation one deduces then the following corollary, which improves the entropic convergence obtained in \cite{DeVi2}:
\begin{cor} Let $\gamma \in [0,1]$. Consider $0\leq f_{\mathrm{in}}\in L^{1}_{2+\delta}(\R^{3})$, with $\delta>0$, satisfying \eqref{hypci} in the case $\dd=0=\dd_{0}$ together with the normalization \eqref{eq0}--\eqref{eq:Mass}.  Let $f=f(t,v)$ be a weak solution to the Landau equation \eqref{eq:landau} as constructed in \cite{DeVi}. Then, for any $t_{0} >0$, there exists some explicit $\mu >0$ such that
$$\mathcal{H}_{0}(f(t)|\M) \leq \mathcal{H}_{0}(f_{\mathrm{in}}|\M)\exp\left(-\mu(t-t_{0})\right), \qquad \forall t >t_{0}.$$ 
\end{cor}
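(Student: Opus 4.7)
The plan is to combine Corollary \ref{cor:LandauFI} with classical a priori estimates for weak solutions to the Landau equation \eqref{eq:landau} with hard potentials, and then close the argument with Grönwall's lemma. First I would verify that for every $t \geq 0$ the solution $f(t,\cdot)$ lies in the class $\mathcal{Y}_0(f_{\mathrm{in}})$ introduced in Remark \ref{rmq:Y0}: conservation of mass, momentum and energy (in the normalization \eqref{eq:Mass}) holds for the weak solutions built in \cite{DeVi}, and the $H$-theorem gives $H(f(t)) \leq H(f_{\mathrm{in}})$. Combined with the weak form of the dissipation identity
$$
\frac{\d}{\d t}\,\mathcal{H}_0(f(t)|\M) = -\mathscr{D}_0(f(t)),
$$
Corollary \ref{cor:LandauFI} then yields
$$
\frac{\d}{\d t}\,\mathcal{H}_0(f(t)|\M) \leq -\lambda(f(t))\,\mathcal{H}_0(f(t)|\M), \qquad t > 0,
$$
so that the whole question is reduced to producing a uniform lower bound $\lambda(f(t)) \geq \mu > 0$ valid for $t \geq t_0$.

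Looking back at \eqref{eq:lambdag} (with $\kappa_0 = 1$ since $\dd=0$), this amounts to uniform upper bounds on $\lm_{2+\gamma}(f(t))$ and $\mathscr{I}_\gamma(f(t))$ together with positive lower bounds on $1/\bm{B}_\gamma$ and $1/\bm{e}_\gamma$, all uniform for $t \geq t_0$. The control of $1/\bm{e}_\gamma$ is exactly conservation of energy; the lower bound on $1/\bm{B}_\gamma$ comes from a standard non-degeneracy argument relying on mass and energy conservation together with the uniform $L^1\log L^1$ bound: for $\sigma \in \S^1$ and $i\neq j$ the weight $|\sigma_1 v_i - \sigma_2 v_j|^2\langle v\rangle^{-2}$ vanishes only on a two-dimensional plane, and the Dunford--Pettis criterion prevents concentration of mass in an arbitrarily thin neighbourhood of this plane. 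Propagation of the $L^1_{2+\delta}$ moment — hence of $\lm_{2+\gamma}(f(t))$ — for hard potentials is a classical estimate going back to \cite{DeVi}. The main obstacle is the control of $\mathscr{I}_\gamma(f(t))$ when $\gamma > 0$: splitting the integral at $|w-v|=1$, the far part is dominated by $\lm_{2+\gamma}(f(t))$, while the near part is bounded by $\|f(t)\|_\infty\int_{|w-v|\leq 1}|w-v|^{-\gamma}\langle w\rangle^2\d w$. The required bound $\sup_{t\geq t_0}\|f(t)\|_\infty \leq C(t_0) < \infty$ is an instance of the instantaneous $L^\infty$ smoothing effect available for the Landau equation with hard potentials, and this is precisely why a positive starting time $t_0 > 0$ is needed.

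Once $\lambda(f(t)) \geq \mu > 0$ is established for $t \geq t_0$, Grönwall's lemma applied to the differential inequality above gives
$$
\mathcal{H}_0(f(t)|\M) \leq \mathcal{H}_0(f(t_0)|\M)\exp\bigl(-\mu(t-t_0)\bigr) \leq \mathcal{H}_0(f_{\mathrm{in}}|\M)\exp\bigl(-\mu(t-t_0)\bigr),
$$
where the second inequality uses $H(f(t_0)) \leq H(f_{\mathrm{in}})$. I expect the delicate point to be making the rate $\mu$ genuinely \emph{explicit}: the non-degeneracy constant $\bm{B}_\gamma$ and the moment estimates are quantitative in a straightforward way, but the quantitative $L^\infty$ smoothing used to bound $\mathscr{I}_\gamma(f(t))$ is by far the most technical ingredient and will dictate the numerical value of $\mu$.
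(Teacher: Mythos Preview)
Your overall route matches the paper's exactly: differentiate the relative entropy, apply Corollary~\ref{cor:LandauFI}, secure $\inf_{t\geq t_0}\lambda(f(t))\geq\mu>0$ via a priori bounds valid for $t\geq t_0$, and finish with Gronwall; the paper's own proof of this corollary is a sketch that defers the lower bound on $\lambda(f(t))$ to Section~\ref{sec:LT}.

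Two technical claims in your sketch need correction, though neither is structural. First, your decomposition of $\mathscr{I}_\gamma(g)$ at $|w-v|=1$ does not neutralize the prefactor $\langle v\rangle^\gamma$: on the far piece, $|w-v|^{-\gamma}\leq1$ leaves $\langle v\rangle^\gamma\,\lm_2(g)$, which is unbounded in $v$, and the near piece likewise produces $\langle v\rangle^{2+\gamma}\|g\|_\infty$. The paper (Lemma~\ref{nl1}) instead splits along $\{|v-w|\leq|v|/2\}$, where $\langle v\rangle^\gamma\lesssim\langle w\rangle^\gamma$, versus its complement, where $\langle v\rangle^\gamma|v-w|^{-\gamma}$ is bounded for $|v|\geq1$; the residual local singularity is then controlled by $\|g\|_{L^2_{2+\gamma}}$ (available for $t\geq t_0$ by the analogue of Proposition~\ref{moments}) rather than by $\|g\|_\infty$. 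Second, conservation of energy gives $\sum_i a_i=3$, not a lower bound on $\min_i a_i=3/\bm{e}_\gamma$; bounding $\bm{e}_\gamma$ requires the same non-concentration argument you correctly outline for $\bm{B}_\gamma$ (Lemma~\ref{nl2}). With these two fixes your proof coincides with the paper's.
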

\begin{proof} We give here  a sketch of the proof which illustrates the interest of deriving functional inequalities like those of Theorem \ref{theo:main-entro}. If $f(t,\cdot)$ is a solution to \eqref{eq:landau} as constructed in \cite{DeVi}, then 
$$\dfrac{\d}{\d t}\mathcal{H}_{0}(f(t)|\M)=-\mathscr{D}_{0}(f(t)), \qquad \forall t >0$$
so that, according to Corollary \ref{cor:LandauFI}, we deduce that
$$\dfrac{\d}{\d t}\mathcal{H}_{0}(f(t)|\M) \leq -\lambda(f(t))\mathcal{H}_{0}(f(t)|\M), \qquad \forall t >0.$$
One can then prove easily (see Section \ref{sec:LT} for details) that, for any $t_{0} >0$, 
$$\inf_{t \geq t_{0}}\lambda(f(t)) \geq \mu >0,$$
from which the conclusion follows by a simple application of Gronwall Lemma and the fact that $\mathcal{H}_{0}(f(t_{0})|\M) \leq \mathcal{H}_{0}(f_{\mathrm{in}}|\M)$. \end{proof}
The above line of reasoning can be still implemented for the LFD equation and an important consequence of Theorem \ref{theo:main-entro} is the following improvement of the convergence result stated in Theorem \ref{theo:conve}.}  %
\begin{theo}\label{cvex} {Let $\gamma \in (0,1]$.}
Consider $0 \leq f_{\mathrm{in}}\in L^{1}_{s_{\gamma}}(\R^{3})$, with $s_{\gamma}=\max\big\{\tfrac{3\gamma}{2}+2, 4-\gamma\big\}$, satisfying \eqref{hypci} together with the normalization \eqref{eq0}--\eqref{eq:Mass}.  Let $f=f(t,v)$ be a weak solution to the LFD equation as constructed in \cite{bag1} . Then, for any $t_{0} >0$, there exists $\dd^{\dagger} \in (0,\dd_{0})$ and $\mu >0$ (depending only on $t_{0}$ and  {$H(f_{\mathrm{in}})$}), such that
$$\mathcal{H}_{\dd}(f(t)|\M_{\dd}) \leq  \mathcal{H}_{\dd}(f_{\mathrm{in}}|\M_{\dd})\exp\left(-\mu\,(t-t_{0})\right) , \quad \qquad \forall \dd \in (0,\dd^{\dagger}), \qquad t >t_{0}.$$
\end{theo}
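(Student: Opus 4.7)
The plan is to imitate the Gronwall scheme that the authors sketched just above for the classical Landau equation. For any weak solution constructed in \cite{bag1}, the Fermi-Dirac entropy identity
\begin{equation*}
\frac{\d}{\d t}\mathcal{H}_{\dd}(f(t)|\M_{\dd}) = -\mathscr{D}_{\dd}(f(t))
\end{equation*}
holds for $t>0$, so that $t \mapsto \mathcal{H}_{\dd}(f(t)|\M_{\dd})$ is non-increasing and in particular $\mathcal{H}_{\dd}(f(t_0)|\M_{\dd}) \leq \mathcal{H}_{\dd}(f_{\mathrm{in}}|\M_{\dd})$. Feeding Theorem~\ref{theo:main-entro} applied to $g=f(t)$ into this identity turns it, for $t\geq t_0$, into the closed differential inequality
\begin{equation*}
\frac{\d}{\d t}\mathcal{H}_{\dd}(f(t)|\M_{\dd}) \leq -2\lambda(f(t))\,\Big[b_{\dd} - \tfrac{12\dd^{2}}{\kappa_{0}^{4}}\max\bigl(\|f(t)\|_{\infty}^{2},\|\M_{\dd}\|_{\infty}^{2}\bigr)\Big]\, \mathcal{H}_{\dd}(f(t)|\M_{\dd}),
\end{equation*}
valid as soon as the non-saturation bound $1-\dd f(t,v)\geq\kappa_0$ holds on $[t_0,\infty)\times\R^3$. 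Gronwall on $[t_0,t]$ combined with the monotonicity of $\mathcal{H}_{\dd}$ will then yield the announced decay, provided the bracket on the right can be bounded from below by a positive constant $2\mu$ depending only on $t_0$ and $H(f_{\mathrm{in}})$.

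The bulk of the work consists in verifying, uniformly for $t\geq t_0$ and for $\dd\in(0,\dd^{\dagger})$ with $\dd^{\dagger}$ sufficiently small, three \emph{a priori} inputs on $f(t)$. First, the \textbf{uniform non-saturation} $\inf_v(1-\dd f(t,v)) \geq \kappa_0(t_0)$ is established in \cite{ABL} as soon as $\dd < c\,\dd_{\mathrm{sat}}$, which pins down the first smallness threshold on $\dd^{\dagger}$. Second, the \textbf{uniform moment and $L^{\infty}$ bounds}: propagation (and appearance) of $\lm_{s}$-moments of arbitrary order and ultracontractive smoothing $\|f(t)\|_{\infty}\leq C(t_0)$ for $t\geq t_0/2$ are provided by \cite{bag1,ABL} with constants independent of $\dd\in[0,\dd_0]$; combined with the explicit form \eqref{eq:FDS}, which gives $\sup_{\dd\in[0,\dd_0]}\|\M_{\dd}\|_{\infty}<\infty$, and a standard splitting of the singularity $|w-v|^{-\gamma}$ into $|w-v|\leq 1$ and $|w-v|>1$, these bounds directly control $\lm_{2+\gamma}(f(t))$ and $\mathscr{I}_{\gamma}(f(t))$ uniformly in $t\geq t_0$ and $\dd$. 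Third, the \textbf{non-degeneracy of $\bm{e}_{\gamma}$ and $\bm{B}_{\gamma}$}: from $\mathcal{S}_{\dd}(f(t))\geq\mathcal{S}_{\dd}(f_{\mathrm{in}})$ one extracts a uniform classical $H$-bound on $f(t)$; combined with the conservation of mass and energy under the normalization \eqref{eq:Mass}, this prevents $f(t)$ from concentrating on any lower-dimensional set. A Dunford-Pettis / Chebyshev-type argument then delivers $\int f(t,v)\,v_i^2\,\d v \geq \eta$ and the analogous lower bound for $\bm{B}_{\gamma}^{-1}$, for constants $\eta,\eta'$ depending only on $H(f_{\mathrm{in}})$.

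Once these three inputs are in hand, $\lambda(f(t))\geq\lambda_{*}>0$ uniformly on $[t_0,\infty)$, and one chooses $\dd^{\dagger}$ so small that the bracket above is bounded below by $b_{\dd^{\dagger}}/2\geq b_0/4>0$ throughout $(0,\dd^{\dagger})$. Setting $\mu:=\lambda_{*}b_0/2$ and invoking Gronwall together with $\mathcal{H}_{\dd}(f(t_0)|\M_{\dd})\leq\mathcal{H}_{\dd}(f_{\mathrm{in}}|\M_{\dd})$ closes the argument.

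The step I expect to be the main obstacle is the \emph{quantitative} non-degeneracy of $\bm{e}_{\gamma}$ and especially $\bm{B}_{\gamma}$: one must produce explicit lower bounds that depend only on $H(f_{\mathrm{in}})$ (and on the normalized mass and energy), and that do not degenerate as $\dd\to0$. The uniform-in-time non-saturation is another delicate input, but it is already supplied by \cite{ABL} and simply determines the smallness threshold on $\dd^{\dagger}$; beyond this, everything else reduces to assembling known regularity estimates for solutions of \eqref{LFDeq} and applying Theorem~\ref{theo:main-entro}.
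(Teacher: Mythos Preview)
Your proposal is correct and follows essentially the same Gronwall scheme as the paper: feed Theorem~\ref{theo:main-entro} into the entropy identity, then control all the ingredients uniformly for $t\geq t_0$ via the non-saturation and moment/$L^\infty$ estimates of \cite{ABL,bag1}, choose $\dd^{\dagger}$ small enough to make the bracket positive, and integrate. The one minor methodological difference is in the step you single out as the main obstacle: the paper bounds $\bm{e}_{\gamma}^{-1}$ and $\bm{B}_{\gamma}^{-1}$ from below not via an entropy/Dunford--Pettis argument but via the uniform-in-time $L^{2}$ bound on $f(t)$ (available from Proposition~\ref{moments}), which makes the non-concentration estimate slightly cleaner --- see Lemma~\ref{nl2}.
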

 It is worth noticing that the convergence here  is an \emph{entropic convergence} which, of course, implies a convergence in $L^{1}_{2}$ thanks to the following  Csiszar-Kullback inequality for Fermi-Dirac relative entropy obtained in \cite[Theorem 3]{LW}:
\begin{equation}\label{eq:csiszar}
\|f-\M_{\dd}\|_{L^{1}}^{2} \leq 2 \,\mathcal{H}_{\dd}(f|\M_{\dd}),\end{equation}
for all $0 \leq f \leq \frac{1}{\dd}$ with 
$$\int_{\R^{3}}f(v)\left(\begin{array}{c}1\\v \\|v|^{2}\end{array}\right)\d v=\left(\begin{array}{c}1\\0 \\3\end{array}\right).$$

As mentioned earlier, we aim to provide a systematic study of the entropy production which in particular applies also to the study of \emph{soft potentials}. For such soft potentials, the entropy estimate provides some fundamental \emph{a priori} estimate on solutions to \eqref{LFDeq}.
 We refer the reader to the recent contribution \cite{LDProcX} where this question is described in detail for solutions to the Landau equation \eqref{eq:landau} for Coulomb interactions. In this context, our main result can be formulated as follows:
\begin{theo} \label{cor:Fisherg} Let $0\leq f_{\mathrm{in}}\in L^{1}_{2}(\R^{3})$ be fixed, and satisfy \eqref{hypci} together with the normalization \eqref{eq0}--\eqref{eq:Mass}. Assume that $\gamma <0$ and $\dd \in (0,\dd_{0}]$. Then, there exists a positive constant $C_{0}(\gamma)$ depending only on {
$H(f_{\mathrm{in}})$} and on $\gamma$ such that 
$$\int_{\R^{3}}\left|\nabla \sqrt{g(v)}\right|^{2}\langle v\rangle^{\gamma}\d v \leq C_{0}(\gamma)\left(1+\mathscr{D}_{\dd}(g)\right), \qquad \forall 
\dd \in (0,\dd_{0}], \qquad \forall g \in \mathcal{Y}_{\dd}(f_{\mathrm{in}}).$$
\end{theo}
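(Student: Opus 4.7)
The plan is to adapt to the Fermi-Dirac setting the strategy developed in \cite{LDProcX} for the classical Landau equation. First I would extract a uniform (in $\dd$) upper bound on the Boltzmann entropy from the condition $g \in \mathcal{Y}_\dd(f_{\mathrm{in}})$. Writing $\mathcal{S}_\dd(h) = -H(h) - \log \dd - \dd^{-1}\int (1-\dd h)\log(1-\dd h)\,dv$ and using the elementary inequality $0 \leq -(1-y)\log(1-y) \leq y$ on $[0,1]$, the hypothesis $\mathcal{S}_\dd(g)\geq \mathcal{S}_\dd(f_{\mathrm{in}})$ readily yields the $\dd$-independent bound $H(g) \leq H(f_{\mathrm{in}})+1$.

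Next I would reproduce, with the obvious quantum modifications, the key algebraic identity of \cite{LDProcX}. Setting $\xi:=\nabla g/[g(1-\dd g)]$ so that $g(1-\dd g)\,\xi=\nabla g$, expanding $|\Pi(v-\vet)(\xi-\xi_{\ast})|^{2}$ and using the $v\leftrightarrow\vet$ symmetry, one obtains
$$
\mathscr{D}_{\dd}(g) = \int_{\R^{3}} \bar{\mathcal{A}}_{\dd}(v):\frac{\nabla g \otimes \nabla g}{g(1-\dd g)}\,dv - T(g),
$$
where $\bar{\mathcal{A}}_\dd(v):=\int |v-\vet|^{\gamma+2} g(\vet)(1-\dd g(\vet))\,\Pi(v-\vet)\,d\vet$ and $T(g)=\int \Psi\,\nabla g \cdot \Pi \nabla_{\ast}g_{\ast}\,dv\,d\vet$. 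A double integration by parts based on $\sum_{i}\partial_{z_{i}}(\Psi\,\Pi_{ij})(z)=-2|z|^{\gamma} z_{j}$ reduces $T(g)$ to $2(\gamma+3)\int g(v)g(\vet)|v-\vet|^{\gamma}\,dv\,d\vet$, a weakly singular integral controlled by $\|g\|_{L^{1}_{2}}$ and $H(g)$ through a standard Hardy--Littlewood--Sobolev-type estimate, hence uniformly bounded thanks to the previous step.

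The central technical step is to establish an ellipticity bound of the form $\bar{\mathcal{A}}_\dd(v)\,\xi\cdot\xi \geq c_{0}\langle v\rangle^{\gamma}|\xi|^{2}$ for all $\xi\in \R^{3}$, with $c_{0}>0$ depending only on $H(f_{\mathrm{in}})$ and $\gamma$, uniform in $\dd\in (0,\dd_{0}]$. The natural route is to split the $\vet$-integral along $\{g(\vet)\leq 1/(2\dd)\}$, on which $g_{\ast}(1-\dd g_{\ast})\geq g_{\ast}/2$, and its complement, which has Lebesgue measure at most $2\dd$ by Markov's inequality. On the good region one is essentially reduced to the classical Landau pointwise ellipticity estimate, available under the uniform moment and entropy bounds secured above. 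Combining this with the pointwise inequality $|\nabla g|^{2}/g \leq |\nabla g|^{2}/[g(1-\dd g)]$ (valid since $1-\dd g \in [0,1]$) and with the control of $T(g)$ then yields $c_{0}\int \tfrac{|\nabla g|^{2}}{g}\,\langle v\rangle^{\gamma}\,dv \leq \mathscr{D}_{\dd}(g) + C(\gamma, H(f_{\mathrm{in}}))$, which gives the claim since $|\nabla \sqrt{g}|^{2} = |\nabla g|^{2}/(4g)$.

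The main obstacle I expect is precisely this uniform-in-$\dd$ ellipticity of $\bar{\mathcal{A}}_{\dd}$: the factor $(1-\dd g_{\ast})$ may vanish exactly where $g_{\ast}$ saturates to $\dd^{-1}$, and one must use the Fermi-Dirac entropy bound carefully to guarantee that such saturation does not degrade the matrix. The measure-theoretic decomposition above reduces the problem to the classical case, but the tracking of the $\dd$-dependent error coming from the saturated region and the precise form of the constants coming from moments and entropy will need to be handled with care.
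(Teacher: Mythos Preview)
Your route is genuinely different from the paper's. The paper does \emph{not} expand the entropy production into a diagonal diffusion term minus a cross term; instead it applies Proposition~\ref{theo:func}, a Gram--determinant functional inequality in the spirit of \cite{DesvJFA,Desv}, with the specific choices $M(v)=(1-\dd g)\langle v\rangle^{\gamma}$ and $\phi(r)=(1+2r)^{\gamma/4}$, and then bounds the determinant $\Delta_{\phi,i,j}(F)$ from below using the uniform non-concentration of $F=g(1-\dd g)$ given by Lemma~\ref{L2unif}. No term of the type $\int gg_{\ast}|v-\vet|^{\gamma}$ ever appears.

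The real gap in your proposal is not the ellipticity of $\bar{\mathcal A}_{\dd}$ but the control of $T(g)$. Your claim that
\[
T(g)=2(\gamma+3)\int_{\R^{3}\times\R^{3}}g\,g_{\ast}\,|v-\vet|^{\gamma}\,\d v\,\d\vet
\]
is bounded in terms of $\|g\|_{L^{1}_{2}}$ and $H(g)$ ``through a standard Hardy--Littlewood--Sobolev-type estimate'' is false. HLS requires $g\in L^{p}$ with $p=6/(6+\gamma)>1$, and the class $\mathcal Y_{\dd}(f_{\mathrm{in}})$ only provides, uniformly in $\dd$, the $L^{1}_{2}$ moments and the bound $H(g)\le H(f_{\mathrm{in}})+1$, i.e.\ $L\log L$, which does not embed into any $L^{p}$, $p>1$. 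Concretely, take $g=g_{0}+g_{1}$ with $g_{0}$ a fixed smooth profile and $g_{1}=\delta\,\epsilon^{-3}\rho((v-v_{0})/\epsilon)$, $\|\rho\|_{L^{1}}=1$, $\epsilon^{3}=\delta e^{-1/\delta}$. One checks that $\int g=1$, the energy can be fixed by the choice of $v_0$, $H(g)$ stays bounded as $\delta\to0$, yet $\int g_{1}g_{1,\ast}|v-\vet|^{\gamma}\sim\delta^{2}\epsilon^{\gamma}\to\infty$. The only $L^{\infty}$ bound at your disposal is $g\le\dd^{-1}$, which gives $T(g)\le C(1+\dd^{-1})$ and hence no uniform estimate as $\dd\to0$. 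This is precisely the obstruction the paper's Gram--matrix approach is designed to avoid: all quantities in Proposition~\ref{theo:func} are controlled by moments of $F$ and by $\mathcal J_{\gamma}(\phi,M)$, which for $\gamma<0$ is bounded using only $|v-w|^{-\gamma}\le C_{\gamma}\langle v\rangle^{-\gamma}\langle w\rangle^{-\gamma}$ and $\lm_{2}(g)$.

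As a secondary remark, your proposed handling of the ellipticity is also not right as stated: on the ``bad'' set $\{g_{\ast}>1/(2\dd)\}$ the measure is at most $2\dd$, but the \emph{mass} $\int_{\mathrm{bad}}g$ can be all of $1$ (take $g=\dd^{-1}\mathbf 1_{A}$, $|A|=\dd$), so $g\,\mathbf 1_{\mathrm{good}}$ need not satisfy any lower bound and the classical ellipticity argument does not apply to it. The correct object to feed into the coercivity argument is $F=g(1-\dd g)$ itself, for which Lemma~\ref{L2unif} supplies both the lower bound $\int_{|v|\le R(f_{\mathrm{in}})}F\ge\eta(f_{\mathrm{in}})>0$ and the equi-integrability $\int_{A}F\le\delta$ for $|A|$ small, uniformly in $\dd$. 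This is exactly how the paper bounds the Gram determinant from below, and it would also yield the pointwise ellipticity $\bar{\mathcal A}_{\dd}(v)\ge c_{0}\langle v\rangle^{\gamma}\mathrm{Id}$ you want---but that alone does not rescue your argument, because the $T(g)$ term remains uncontrolled.
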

A similar result is known to hold for Landau equation (i.e. $\dd=0$), and is a consequence of more general functional inequalities for the entropy production $\mathscr{D}_{0}$, see \cite{DesvJFA, Desv}. In the quantum case, the situation is very similar, and Theorem \ref{cor:Fisherg} is a consequence of a more general functional inequality (see Proposition \ref{theo:func}).

\subsection{Organization of the paper} In Section \ref{sec:func}, we present the two main functional inequalities satisfied by the entropy production $\mathscr{D}_{\dd}$.  In particular the complete proof of Theorem \ref{cor:Fisherg} is presented. The method is inspired by the results of \cite{DesvJFA, LDProcX}. In Section \ref{sec:relative}, the links of the functional inequalities obtained in Section \ref{sec:func} and the relative entropy $\mathcal{H}_{\dd}(f|\M_{\dd})$ are described.  We give the full proof of Theorem \ref{theo:main-entro} which is valid for hard (and Maxwell molecules) potentials $\g  \ge 0$.  We briefly explain how such estimates can be used to the study of the LFD equation \eqref{LFDeq} for soft potentials $\g <0$, anticipating the results obtained in \cite{ABDL-soft}. We finally apply Theorem \ref{theo:main-entro} to the long time behaviour and get Theorem \ref{cvex},  extending the results of \cite{ABL}, in Section~\ref{sec:LT}.

\subsection*{Acknowledgments} R. Alonso gratefully acknowledges the support from Conselho Nacional de Desenvolvimento Cient\'ifico e Tecnol\'ogico (CNPq), grant Bolsa de Produtividade em Pesquisa (303325/2019-4).  BL gratefully acknowledges the financial support from the Italian Ministry of Education, University and Research (MIUR), ``Dipartimenti di Eccellenza'' grant 2018-2022 as well as the support  from the \textit{de Castro Statistics Initiative}, Collegio Carlo Alberto (Torino). 

\section{Two kinds of functional inequalities for entropy production}\label{sec:func}

We provide in this section the two main functional inequalities associated to the entropy production $\mathscr{D}_{\dd}(f)$.  Before doing so, we recall the following result, see \cite[Lemma 2.3 \& 2.4]{ABL}.
\begin{lem}\label{L2unif}
Let $0\leq f_{\mathrm{in}}\in L^{1}_{2}(\R^{3})$ be fixed and bounded satisfying \eqref{hypci}.  Then, for any $\dd \in (0,\dd_{0}]$, the following holds:
\begin{enumerate}
\item For any  $f \in \mathcal{Y}_{\dd}(f_{\mathrm{in}})$, it holds that
\begin{equation}\label{e0}
\inf_{0<\dd\leq \dd_{0}}\int_{|v|\leq R(f_{\mathrm{in}})} f(1-\dd f)\, \d v \geq \eta(f_{\mathrm{in}})>0\,,
\end{equation}
for some $R(f_{\mathrm{in}})>0$ and $\eta(f_{\mathrm{in}})$ depending only on  $H(f_{\mathrm{in}})$ but not on $\dd$.
\item For any $\delta >0$ there exists  {$\eta_*(\delta)>0$} depending only on   $H(f_{\mathrm{in}})$ such that for any $f \in \mathcal{Y}_{\dd}(f_{\mathrm{in}})$, and any measurable set $A\subset \R^3$, 
\begin{equation}\label{Lem6DV}
|A|\leq  {\eta_*(\delta)} \Longrightarrow \int_A f(1-\dd f)\, \d v \leq \delta.
\end{equation}
\end{enumerate}
\end{lem}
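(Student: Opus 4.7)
My strategy is to upgrade the assumed bound $\mathcal{S}_{\dd}(f) \ge \mathcal{S}_{\dd}(f_{\mathrm{in}})$ into a genuine $\dd$-uniform control of the classical Boltzmann entropy $H(f) = \int f \log f\, \d v$, and then apply standard equi-integrability arguments. Starting from the identity
$$-\mathcal{S}_{\dd}(g) = H(g) + \log \dd + \dd^{-1}\int_{\R^{3}} (1-\dd g)\log(1-\dd g)\, \d v,$$
together with the elementary inequality $(1-u)\log(1-u) \in [-u,0]$ for $u \in [0,1]$, applied to $g = f_{\mathrm{in}}$ and $g = f$ separately, I obtain the two-sided bound $c_{\star} \le H(f) \le H(f_{\mathrm{in}}) + 1$ valid for every $\dd \in (0,\dd_{0}]$ and every $f \in \mathcal{Y}_{\dd}(f_{\mathrm{in}})$; the lower bound $c_{\star}$ comes from Gibbs' comparison with the centred Gaussian of mass and energy fixed by \eqref{eq:Mass}.

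For item (2), I would then run a de la Vall\'ee Poussin argument on this uniform $L\log L$-bound. First, $\int_{\{f < 1\}} f |\log f|\, \d v \le C_{0}$ is obtained by splitting against $e^{-\langle v\rangle^{2}}$ and using the fixed second moment, so that $\int_{\{f \ge 1\}} f \log f\, \d v \le H(f_{\mathrm{in}}) + 1 + C_{0}$. A Chebyshev-type estimate gives $\int_{\{f \ge K\}} f\, \d v \le (H(f_{\mathrm{in}}) + 1 + C_{0})/\log K$ for $K \ge 1$, whence $\int_{A} f\, \d v \le K|A| + (H(f_{\mathrm{in}}) + 1 + C_{0})/\log K$. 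Given $\delta > 0$, choosing $K = K(\delta, H(f_{\mathrm{in}}))$ large enough and setting $\eta_{*}(\delta) = \delta/(2K)$ produces $\int_{A} f\, \d v \le \delta$, and the pointwise inequality $f(1-\dd f) \le f$ closes the proof of (2).

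For item (1), I would combine the moment-tail estimate $\int_{|v| > R} f\, \d v \le 3/R^{2}$ (Chebyshev against the energy) with a $\dd$-uniform lower bound $\int_{\R^{3}} f(1-\dd f)\, \d v \ge 2\eta$, and then pick $R$ so that $3/R^{2} \le \eta$. For the full-space lower bound I would split $\int f(1-\dd f)\, \d v \ge \tfrac{1}{2}\bigl(1 - \int_{\{f > 1/(2\dd)\}} f\, \d v\bigr)$. When $\dd \le \dd_{\star}$ for some threshold $\dd_{\star}$ depending only on $H(f_{\mathrm{in}})$, the entropy tail estimate from the proof of (2) yields $\int_{\{f > 1/(2\dd)\}} f\, \d v \le (H(f_{\mathrm{in}}) + 1 + C_{0})/\log(1/(2\dd)) \le 1/2$, giving $\int f(1-\dd f)\, \d v \ge 1/4$. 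For the complementary range $\dd \in (\dd_{\star}, \dd_{0}]$, I would exploit the representation $\dd\, \mathcal{S}_{\dd}(f) \ge \dd\, \mathcal{S}_{\dd}(f_{\mathrm{in}}) = \int \Phi(\dd f_{\mathrm{in}})\, \d v > 0$ with $\Phi(u) = -u\log u - (1-u)\log(1-u)$, noting that on the compact interval $[\dd_{\star}, \dd_{0}]$ the map $\dd \mapsto \dd\, \mathcal{S}_{\dd}(f_{\mathrm{in}})$ is continuous and strictly positive, hence bounded below by some $c(f_{\mathrm{in}}) > 0$.

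The principal obstacle lies in this near-saturation regime of item (1): because $\Phi(u)/(u(1-u))$ diverges at $u \in \{0,1\}$, no pointwise inequality directly relates $\int \Phi(\dd f)\, \d v$ to $\int \dd f(1-\dd f)\, \d v$. The cleanest remedy is a two-regime analysis combining the $L\log L$-tail from (2) on $\{\dd f \le 1/2\}$ with the local linearised bound $\Phi(1-\eta) \simeq \eta(1 + |\log \eta|)$ on the near-saturated set $\{\dd f > 1/2\}$; splicing these pieces into a single estimate uniform in $\dd \in (0, \dd_{0}]$ is the most delicate technical step.
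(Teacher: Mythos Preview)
The paper does not actually prove this lemma; it is quoted verbatim from \cite[Lemma~2.3~\&~2.4]{ABL}. So there is no in-paper proof to compare against, only the indication (see Remark~\ref{rem:Bij2}) that the bound $H(g)\le H(f_{\mathrm{in}})+1$ is indeed the engine, which is exactly what you derive.

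Your argument for item~(2) is complete and correct: the identity for $-\mathcal S_{\dd}$, the two-sided bound on $\dd^{-1}\!\int(1-\dd f)\log(1-\dd f)$, the resulting $\dd$-uniform $L\log L$ control, and the de~la~Vall\'ee~Poussin estimate are all sound, and the final majorization $f(1-\dd f)\le f$ closes it. Likewise, your treatment of item~(1) in the regime $\dd\le\dd_\star$ is fine.

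The gap is where you yourself locate it, in the complementary range $\dd\in(\dd_\star,\dd_0]$, and it is a bit more serious than a technical splice. Your compactness argument on $[\dd_\star,\dd_0]$ produces a lower bound $c(f_{\mathrm{in}})=\min_{\dd\in[\dd_\star,\dd_0]}\dd\,\mathcal S_{\dd}(f_{\mathrm{in}})$ that depends on $f_{\mathrm{in}}$ through $\dd_0=\|f_{\mathrm{in}}\|_\infty^{-1}$ and $S_0$, \emph{not} only through $H(f_{\mathrm{in}})$; this does not match the lemma's stated dependence. Moreover, even granting such a lower bound on $\int\Phi(\dd f)\,\d v$, your last paragraph does not actually convert it into a lower bound on $\int f(1-\dd f)\,\d v$: the proposed two-set decomposition $\{\dd f\le 1/2\}\cup\{\dd f>1/2\}$ is the right split, but on the near-saturated set one has $\Phi(u)\sim (1-u)|\log(1-u)|$ while $u(1-u)\sim(1-u)$, and you still need a quantitative step (e.g.\ bounding $\int_{\{\dd f>1/2\}}(1-\dd f)^{1/2}$ via Cauchy--Schwarz against the measure $|\{\dd f>1/2\}|\le 2\dd$) to pass from one to the other. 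As written, the proof of item~(1) is incomplete; to obtain the claimed dependence only on $H(f_{\mathrm{in}})$ you will need either a sharper use of the energy constraint on the near-saturated set or to consult the original argument in \cite{ABL}.
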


\subsection{Entropy production aiming to regularity}

We now aim to provide a control of the regularity of the solution $f(t)$ to \eqref{LFDeq} by the entropy production in the spirit of \cite{DesvJFA,Desv}.
 We first observe that we can easily adapt the computations of \cite[Theorem 2]{Desv} (which actually work also in the case $\gamma >0$) and prove the following functional inequality where, for any $s \in \R$ and any function $f=f(v)$ and $\chi=\chi(r)$ $(r \geq0$), we set
$$\mathcal{G}_{s}(\chi,f)=\int_{\R^{3}}\chi\left(\tfrac{1}{2}|v|^{2}\right)f(v)\langle v\rangle^{s}\d v . $$
\begin{prop}[\textbf{Functional inequality}]\label{theo:func} Let $g=g(v),\,M=M(v) \geq 0$ and 
$\phi = \phi(r)$ be given nonnegative functions {with $0 \leq g\leq \dd^{-1}$}. Write
\begin{equation}\label{newh}
F=g(1-\dd\,g),\qquad h=\log g-\log(1-\dd g).
\end{equation}
Then, for any $\gamma \in \R$ and any $i,j=1,2,3$, $i\neq j$,
\begin{multline*}
\Delta_{\phi, i,j}(F)^{2}\,\int_{\R^{3}}F(v)\left|\partial_{i}h(v)\right|^{2}\,M(v)\d v \\
\leq 36\,\mathcal{G}_{2}(\phi,F)^{4}\,\Bigg(\mathcal{G}_{2}(1,F\,M)\bigg[3\mathcal{G}_{1}^{2}(\phi,g)+8\mathcal{G}_{2}^{2}(|\phi'|,g)\bigg]+2\mathcal{J}_{\gamma}(\phi,M)\mathscr{D}_{\dd}(g)\Bigg),
\end{multline*}
where
$$\mathcal{J}_{\gamma}(\phi,M)=\sup_{v\in \R^{3}}M(v)\int_{\R^{3}}\phi^{2}\left(\tfrac{1}{2}|w|^{2}\right)\,F(w)\langle w\rangle^{2}|v-w|^{-\gamma}\d w , $$
and
$$\Delta_{\phi, i,j}(F)=\mathrm{Det}\left(\int_{\R^{3}}\phi\left(\tfrac{1}{2}|w|^{2}\right)F(w)\left(\begin{array}{ccc}1 & w_{i} & w_{j} \\w_{i} & w_{i}^{2} & w_{j}w_{i} \\w_{j} & w_{i}w_{j} & w_{j}^{2}\end{array}\right)\d w\right).$$
\end{prop}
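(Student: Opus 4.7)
The key observation is that, with $F := g(1-\dd g)$ and $h := \log g - \log(1-\dd g)$, one has $F\,\nabla h = \nabla g$ and, thanks to \eqref{eq:Xidd}, the entropy production rewrites as
\begin{equation*}
\mathscr{D}_{\dd}(g) = \tfrac12 \int_{\R^6} |v-v_\ast|^{\gamma+2}\,F\,F_\ast\,\bigl|\Pi(v-v_\ast)(\nabla h - \nabla h_\ast)\bigr|^2\,\d v\,\d v_\ast\,,
\end{equation*}
a Landau-type expression for the pair $(F,h)$. My plan is to follow the scheme of \cite[Theorem~2]{Desv} with $g$ replaced by $F$, $\log g$ by $h$, and the identity $g\,\nabla \log g = \nabla g$ by $F\,\nabla h = \nabla g$. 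The sign of $\gamma$ plays no role in the core linear-algebra step, so the extension from $\gamma < 0$ (as in \cite{Desv}) to arbitrary $\gamma \in \R$ is automatic.

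Fix $v$ and indices $i\neq j$, and let $k$ be the remaining index. Expanding the $k$-th component of $(v-v_\ast)\wedge(\nabla h(v)-\nabla h(v_\ast))$ yields the pointwise identity
\begin{equation*}
B_k(v,v_\ast) + \widetilde R(v,v_\ast) \;=\; c(v) + v_{\ast j}\,\partial_i h(v) - v_{\ast i}\,\partial_j h(v),
\end{equation*}
with $B_k := [(v-v_\ast)\wedge(\nabla h(v)-\nabla h(v_\ast))]_k$, $c(v):= v_i\partial_j h(v)-v_j\partial_i h(v)$, and $\widetilde R(v,v_\ast):=(v_i-v_{\ast i})\partial_j h(v_\ast)-(v_j-v_{\ast j})\partial_i h(v_\ast)$. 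I will multiply this identity by $\phi(\tfrac12|v_\ast|^2)F(v_\ast)$, then successively by the weights $w_l \in \{1, v_{\ast i}, v_{\ast j}\}$, and integrate in $v_\ast$. This produces a $3\times 3$ linear system in the unknowns $(c(v), \partial_i h(v), \partial_j h(v))$ whose coefficient-matrix determinant is, up to sign, exactly $\Delta_{\phi, i, j}(F)$. Cramer's rule then expresses $\Delta_{\phi, i, j}(F)\,\partial_i h(v)$ as a linear combination of the three integrals $\mathcal R_l := \int(B_k + \widetilde R)\,w_l\,\phi F_\ast\,\d v_\ast$, with coefficients given by $2\times 2$ minors whose absolute values are dominated by $2\,\mathcal G_2(\phi, F)^2$.

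After squaring and applying $(a+b+c)^2 \le 3(a^2+b^2+c^2)$ together with $(R^B+R^R)^2 \le 2((R^B)^2+(R^R)^2)$, the two pieces of $\mathcal R_l$ are handled separately. For the cross-product piece $B_k$, I combine the pointwise bound $|B_k|^2 \le |v-v_\ast|^2\,|\Pi(v-v_\ast)(\nabla h-\nabla h_\ast)|^2$ with a Cauchy--Schwarz split using the weights $|v-v_\ast|^{\gamma/2}$ and $|v-v_\ast|^{-\gamma/2}$; integrating against $F(v)M(v)\,\d v$ and absorbing the factor $w_l^2 \le \langle v_\ast\rangle^2$ into the definition of $\mathcal J_\gamma(\phi, M)$ produces the term proportional to $\mathcal J_\gamma(\phi, M)\,\mathscr D_{\dd}(g)$. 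For $\widetilde R$, I use $F\,\nabla h = \nabla g$ to turn $F_\ast\,\partial_l h(v_\ast)$ into $\partial_l g(v_\ast)$ and integrate by parts in $v_\ast$; crucially, the hypothesis $i\neq j$ kills the derivatives $\partial_{v_{\ast j}}(v_i-v_{\ast i})$ and $\partial_{v_{\ast i}}(v_j-v_{\ast j})$, leaving moments of $g$ weighted by $\phi$ or $|\phi'|$ of total order at most $2$, multiplied by polynomials in $v$ of degree at most $1$. Squaring and integrating against $F(v)M(v)\,\d v$ yields the contribution $\mathcal G_2(1, FM)\,[\mathcal G_1^2(\phi, g) + \mathcal G_2^2(|\phi'|, g)]$. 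The main technical obstacle is the bookkeeping: verifying non-degeneracy of the $3\times 3$ system and identifying its determinant as $\Delta_{\phi,i,j}(F)$, executing the six integrations by parts carefully (one per pair of $\widetilde R$-term and test weight $w_l$), and tracking all numerical constants to match the claimed prefactor $36$.
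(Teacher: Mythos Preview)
Your proposal is correct and follows exactly the approach the paper indicates: the paper does not give a detailed proof but simply states that one ``can easily adapt the computations of \cite[Theorem 2]{Desv} (which actually work also in the case $\gamma >0$)'', and your outline is precisely that adaptation, with the substitutions $g\rightsquigarrow F$, $\log g\rightsquigarrow h$, $g\nabla\log g=\nabla g\rightsquigarrow F\nabla h=\nabla g$ made explicit. The key structural points you identify (the $3\times3$ Cramer system with determinant $\pm\Delta_{\phi,i,j}(F)$, the Cauchy--Schwarz split on the $B_k$-piece producing $\mathcal J_\gamma(\phi,M)\mathscr D_\dd(g)$, and the integration by parts on $\widetilde R$ using $F_\ast\nabla h_\ast=\nabla g_\ast$ together with $i\neq j$) are exactly those of \cite{Desv}, so nothing further is needed.
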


One deduces then our main result in this context (Theorem \ref{cor:Fisherg}) in this way:
\begin{proof}[Proof of Theorem \ref{cor:Fisherg}] Remember that $\gamma <0$. Let us fix $\dd \in (0,\dd_{0}]$ and $g \in \mathcal{Y}_{\dd}(f_{\mathrm{in}})$. {Notice then that
$$\int_{\R^{3}}f(v)\langle v\rangle^{2}\d v=\varrho_{\mathrm{in}}\left(1+3E_{\mathrm{in}}\right)=4,$$
under assumptions \eqref{eq0}--\eqref{eq:Mass}.} We apply Proposition \ref{theo:func} with
$$M(v)=(1-\dd\,g(v))\langle v\rangle^{\gamma}, \qquad \phi(r)=(1+2r)^{\frac{\gamma}{4}}.$$
One has
$$\mathcal{G}_{2}(\phi,F)=\int_{\R^{3}}\langle v\rangle^{2+\frac{\gamma}{2}}F(v)\d v \leq  {4},$$
$$\mathcal{G}_{2}(1,F\,M)=\int_{\R^{3}}\langle v\rangle^{2+\gamma}(1-\dd g(v))^{2}g(v)\d v \leq  {4} ,$$
$$\mathcal{G}_{2}^{2}(|\phi'|,g)=\frac{\gamma^{2}}{4}\left(\int_{\R^{3}}\langle v\rangle^{\frac{\gamma}{2}}g(v)\d v\right)^{2} \leq  {\frac{\gamma^{2}}{4}}, $$
and
$$\mathcal{G}_{1}^{2}(\phi,g)=\left(\int_{\R^{3}}\langle v\rangle^{1+\frac{\gamma}{2}}g(v)\d v\right)^{2} \leq \left(\int_{\R^{3}}\langle v\rangle g(v)\d v\right)^{2} \leq  {4}$$
by Cauchy-Schwarz inequality. Moreover, there is some explicit $C_{\gamma} >0$ such that,
$$|v-w|^{-\gamma} \leq C_{\gamma}\langle v\rangle^{-\gamma}\langle w\rangle^{-\gamma}, \qquad \forall v,w \in \R^{3},$$
so that
$$\mathcal{J}_{\gamma}(\phi,M) \leq C_{\gamma}\sup_{v}(1-\dd\,g(v))\langle v\rangle^{\gamma}\,\int_{\R^{3}}\langle w\rangle^{\gamma}F(w)\langle w\rangle^{2}\langle v\rangle^{-\gamma}\langle w\rangle^{-\gamma}\d w ,$$
resulting in
$$\mathcal{J}_{\gamma}(\phi,M) \leq C_{\gamma}\int_{\R^{3}}F(w)\langle w\rangle^{2}\d w \leq  {4C_{\gamma}}.$$
Finally, 
$$\int_{\R^{3}}F(v)\left|\partial_{i}h(v)\right|^{2}\,M(v)\d v =\int_{\R^{3}}\frac{\left|\partial_{i}g(v)\right|^{2}}{g(v)}\langle v\rangle^{\gamma}\d v=4\int_{\R^{3}}\left|\partial_{i}\sqrt{g(v)}\right|^{2}\langle v\rangle^{\gamma}\d v.$$
Therefore, with such a choice of $\phi$ and $M$, we get (for all $i\neq j$)
\begin{equation}\label{eq:estimErho}
\Delta_{\phi, i,j}(F)^{2}\,\int_{\R^{3}}\left|\partial_{i}\sqrt{g(v)}\right|^{2}\langle v\rangle^{\gamma}\d v \leq 9\cdot{4^{5}\bigg(12+2\gamma^{2}+2C_{\gamma}\mathscr{D}_{\dd}(g)\bigg)}.
\end{equation}
It remains to find a lower bound for $\Delta_{\phi,i,j}(F)$. One can write
$$\Delta_{\phi, i,j}(F)=\mathrm{Det}\left(\begin{array}{ccc} \lnorm{1,1}  & \lnorm{1,w_{i}} & \lnorm{w_{j},1} \\ \lnorm{w_{i},1} & \lnorm{w_{i},w_{i}} & \lnorm{w_{i},w_{j}} \\ \lnorm{w_{j},1} & \lnorm{w_{i},w_{j}} & \lnorm{w_{j},w_{j}}\end{array}\right), $$
where $\lnorm{\cdot,\cdot}$ denotes the inner product on $L^{2}(\R^{3}, \phi(\frac12|\cdot|^2)F \d w)$. Thus, $\Delta_{\phi, i,j}(F)$ is the determinant of a Gram matrix and as such,
$$\Delta_{\phi, i,j}(F)^{\frac{1}{3}}\geq \inf_{\sigma \in \S^{2}}\int_{\R^{3}}\phi\left(\tfrac{1}{2}|w|^{2}\right)F(w)\left|\sigma_{1}+\sigma_{2}\,w_{i}+\sigma_{3}\,w_{j}\right|^{2}\d w , $$
since the right-hand side is less than any eigenvalue of the matrix. Recall that we picked $\phi(z)=(1+2z)^{\gamma/4}$. For any $\sigma=(\sigma_{1},\sigma_{2},\sigma_{3}) \in \S^{2}$, one has, for all $\tau >0, R >0$,
\begin{multline*}
\int_{\R^{3}}\phi\left(\tfrac{1}{2}|w|^{2}\right)F(w)\left|\sigma_{1}+\sigma_{2}\,w_{i}+\sigma_{3}\,w_{j}\right|^{2}\d w\\
\geq \tau^{2}\int_{B_{R}}\langle w\rangle^{\frac{\gamma}{2}}F(w)\ind_{\left\{\left|\sigma_{1}+\sigma_{2}\,w_{i}+\sigma_{3}\,w_{j}\right| \geq \tau\right\}}\d w\\
\geq \tau^{2}(1+R^{2})^{\frac{\gamma}{4}}\int_{B_{R}}F(w)\ind_{\left\{\left|\sigma_{1}+\sigma_{2}\,w_{i}+\sigma_{3}\,w_{j}\right| \geq \tau\right\}}\d w\\
=\tau^{2}(1+R^{2})^{\frac{\gamma}{4}}\,\bigg(\int_{B_{R}}F(w)\d w-\int_{B_{R}}F(w)
\ind_{\left\{\left|\sigma_{1}+\sigma_{2}\,w_{i}+\sigma_{3}\,w_{j}\right| \leq \tau\right\}}\d w\bigg),\end{multline*}
where $B_{R}=\{w \in \R^{3}\,;\,|w| \leq R\}$. Thus
\begin{multline}
\Delta_{\phi,i,j}(F)^{\frac{1}{3}} \geq \tau^{2}(1+R^{2})^{\frac{\gamma}{4}}\int_{B_{R}}g(w)(1-\dd\,g(w))\d w\\
-\tau^{2}(1+R^{2})^{\frac{\gamma}{4}}\sup_{\sigma \in \S^{2}}\int_{B_{R}}  g(w)(1-\dd\,g(w))
\ind_{\left\{\left|\sigma_{1}+\sigma_{2}\,w_{i}+\sigma_{3}\,w_{j}\right| \leq \tau\right\}}\d w.
\end{multline}
With the notations of Lemma \ref{L2unif}, let now consider $\delta >0$, to be fixed later. Arguing as in \cite[p. 141]{Desv}, we can find $\tau =\tau_{R, \delta} >0$ small enough such that
 {$$|A_{\tau_{R,\delta}}\cap B_{R}| \leq  \eta_*(\delta) \qquad \forall \sigma \in\S^{2}, $$}
where, for any $\sigma \in \S^{2}$, we introduced the set 
$A_{\tau}=\left\{\left|\sigma_{1}+\sigma_{2}\,w_{i}+\sigma_{3}\,w_{j}\right| \leq \tau\right\}$.
Then, 
 {thanks to Lemma \ref{L2unif} \textit{(2)}},
$$\Delta_{\phi,i,j}(F)^{\frac{1}{3}} \geq \tau_{R,\delta}^{2}(1+R^{2})^{\frac{\gamma}{4}}\int_{B_{R}}g(w)(1-\dd\,g(w))\d w-\tau_{R,\delta}^{2}(1+R^{2})^{\frac{\gamma}{4}}\delta.$$
Selecting  $R=R(f_{\mathrm{in}})$ given by Lemma \ref{L2unif} \textit{(1)}, and using the quantity $\eta(f_{\mathrm{in}}) > 0$ appearing in this lemma,
$$\Delta_{\phi,i,j}(F)^{\frac{1}{3}} \geq \tau_{R(f_{\mathrm{in}}),\delta}^{2}(1+R(f_{\mathrm{in}})^{2})^{\frac{\gamma}{4}}\left(\eta(f_{\mathrm{in}}) -\delta\right) . $$
We pick then  {$\delta=\frac{1}{2}\eta(f_{\mathrm{in}})$, and get that
$\Delta_{\phi,i,j}(F)$
is bounded below by some strictly positive constant}
depending only on {$H(f_{\mathrm{in}})$}. We conclude then thanks to \eqref{eq:estimErho}.\end{proof}
\begin{rmq}\label{GradEntro} {Whenever $-3 \leq \g \leq0$, for an initial datum $f_{\mathrm{in}}$ satisfying assumptions \eqref{hypci} and \eqref{eq0}, any suitable solution $f(t,\cdot)$ to Landau-Fermi-Dirac equation (as constructed for instance in \cite{ABDL-soft} in the case $-2< \g<0$) will then satisfy
$$\int_{t_{1}}^{t_{2}}\d t \int_{\R^{3}}\left|\nabla_{v}\sqrt{f(t,v)}\right|^{2}\langle v\rangle^{\gamma}\d v \leq C_{0}(\gamma)\int_{t_{1}}^{t_{2}}\left(1+\mathscr{D}_{\dd}(f(t))\right)\d t,\qquad 0 < t_{1}< t_{2}\,.$$
Since 
$$-\dfrac{\d}{\d t}\mathcal{S}_{\dd}(f(t))=-\mathscr{D}_{\dd}(f(t)) , $$
one deduces then that
$$\int_{t_{1}}^{t_{2}}\d t \int_{\R^{3}}\left|\nabla_{v}\left(\langle v\rangle^{\frac{\g}{2}}\sqrt{f(t,v)}\right)\right|^{2}\d v \leq \widetilde{C}_{0}(1+t_{2}-t_{1}), \qquad \forall  0 < t_{1} < t_{2}.$$
Notice that a different proof of such inequalities is obtained in \cite{ABDL-soft} for $-2 < \gamma <0.$}

Using the Sobolev embedding $\|u\|_{L^{6}}\lesssim \|\nabla u\|_{L^{2}}$ with $u=\langle v\rangle^{\g/2}\sqrt{f(t,\cdot)}$. {The above regularity estimate translates easily into an estimate for the weighted norm $\|\langle \cdot \rangle^{\g}{f(t,\cdot)}\|_{L^{3}}$. It is also possible to deduce (for some $p>1$) $L^{p}_{t}L^{p}_{v}$ bounds for the solution to \eqref{LFDeq} (see \cite{ABDL-soft} for details).}\end{rmq}

\subsection{Entropy production aiming to long-time behaviour}\label{sec:entLT}

We will consider in all this Section a function $f_{\mathrm{in}}$ {satisfying \eqref{hypci}--\eqref{eq0} and \eqref{eq:Mass} 
and a function $g$ belonging to the class $\mathcal{Y}_{\dd}(f_{\mathrm{in}})$, $\dd\in (0,\dd_{0}]$ (or $g \in \mathcal{Y}_{0}(f_{\mathrm{in}})$ in the case $\dd=0=\dd_{0}$, see Remark \ref{rmq:Y0}).}

Without loss of generality (it amounts to the use of a rotation of $\R^3$), we also assume that 
\begin{equation} \label{eq0bis}
 \int_{\R^{3}} g(v)\,v_i\,v_j\,\d v = 0, \qquad \quad i \neq j.
\end{equation}
Noticing that, for any $z,y \in \R^{3}$,
$$\langle |z|^{2}\Pi(z)y,y\rangle=|z|^{2}|y|^{2}-\langle z,y\rangle^{2}=\frac{1}{2}\sum_{i, j}|z_{i}y_{j}-z_{j}y_{i}|^{2}=\frac{1}{2}\sum_{i\neq j}|z_{i}y_{j}-z_{j}y_{i}|^{2} , $$
we have, with $z=v-\vet$ and $y=\nabla \varphi-\nabla \varphi_{\ast}$,
\begin{equation}\label{eq:Dqij}
\mathscr{D}_{\dd}(g)=\frac{1}{4}\sum_{ij}\int_{\R^{3}\times\R^{3}}F\,F_{\ast}\,|v-w|^{\gamma}\left|q_{ij}(v,w)\right|^{2}\d v\d w ,\end{equation}
where $F=g(1-\dd\,g)$ and
$$q_{ij}(v,w)=(v-w)_{i}\left(\partial_{j}h(v)-\partial_{j}h(w)\right)-(v-w)_{j}\left(\partial_{i}h(v)-\partial_{i}h(w)\right).$$
Notice that
$$q(v,w)=(v-w) \times \big(\nabla h(v)-\nabla h(w)\big).$$
We have clearly
\begin{multline}\label{eq:qij}
q_{ij}(v,w)=\left[v \times \nabla h(v)\right]_{ij}-v_{i}\partial_{j}h(w)+v_{j}\partial_{i}h(w)\\
-w_{i}\partial_{j}h(v)+w_{j}\partial_{i}h(v)+\big[w\times\nabla h(w)\big]_{ij}.\end{multline}
Integrating $q_{ij}(v,w)$ against $g(w)$ and using \eqref{eq0}, one has
\begin{equation} \label{eq4}
 N_{ij} :=N_{ij}(v)= \int q_{ij}(v,w)\, g(w)\,\d w=  \big[v \times \nabla h\big]_{ij}=v_{i}\partial_{j}h(v)-v_{j}\partial_{i}h(v), 
\end{equation}
while, integrating $q_{ij}(v,w)$ against $g(w)w_{i}$, one obtains
\begin{equation} \label{eq5}
 M_{ij} :=M_{ij}(v)=\int q_{ij}(v,w)\, g(w)\,w_{i}\,\d w=-a_i\, \partial_{j}h(v)  + K \,v_j - L_j,  
\end{equation}
where
\begin{multline} \label{eq6}
 a_{i} := \int_{\R^{3}} g(v)\,v_{i}^2\,\d v; \qquad K=K_{\dd}=\frac{1}{\dd}\int_{\R^{3}} \log(1 - \dd \, g(v))\, \d v; \\
 \qquad L_j := L_{j, \dd} =\frac{1}{\dd}\int_{\R^{3}} \log(1 - \dd \, g(v))\, v_j\, \d v.
\end{multline}
\begin{rmq}\label{rmq:KK} Notice that, under the additional assumption that $\inf_{v}(1-\dd g(v)) \geq \kappa_{0}$ (see \eqref{infim} hereafter), 
$$\int_{\R^{3}}|\log(1-\dd g(v))|\d v\leq-\frac{1}{\kappa_{0}}\int_{\R^{3}}(1-\dd g(v))\log(1-\dd g(v))\d v \leq \frac{\dd}{\kappa_{0}}\mathcal{S}_{\dd}(g) < \infty , $$
which makes $K$ well-defined. Moreover, $K <0$ with $\lim_{\dd\to0}K_{\dd}=-\int_{\R^{3}}g(v)\d v=-1$. One actually can check that
$$K(g)=\int_{\R^{3}}w_{i}\partial_{i}h(w)g(w)\d w, \qquad \forall i=1,2,3.$$
In particular, since $\nabla \M_{\dd}(v)=-2b_{\dd}v\,\M_{\dd}(v)(1-\dd\M_{\dd}(v))$, one has 
$$\overline{K} :=K(\M_{\dd})=-2b_{\dd}\int_{\R^{3}}\M_{\dd}(v)v_{i}^{2}\d v=-2b_{\dd} \qquad \forall i=1,2,3 , $$
since the energy of $\M_{\dd}$ is equal to $3$ and, because of radial symmetry, the directional energies all coincide. 
\end{rmq}

\begin{rmq}Finally, notice that
\begin{equation}\label{eq:6b}
\int_{\R^{3}}q_{ij}(v,w)g(w)w_{j}\d w=a_{j}\partial_{i}h(v)-Kv_{i}+L_{i}=-M_{ji}.\end{equation}
\end{rmq}

We begin with the following basic observation:
\begin{lem}\label{lem:estimMN}
Besides the above assumption, we assume that $g$ satisfies \eqref{infim} for some  $\kappa_{0} >0.$ 
Then, for any $\g \in\R$ and any $i\neq j$, 
$$\kappa_{0}^{2}\int_{\R^{3}}|M_{ij}|^{2}g(v) \langle v\rangle^{\gamma}\d v \leq 4\,\bm{I}_{\g}^{(2)}(g)\mathscr{D}_{\dd}(g), $$
and
$$\kappa_{0}^{2}\int_{\R^{3}}|N_{ij}|^{2}g(v) \langle v\rangle^{\gamma}\d v \leq 4\,\bm{I}_{\g}^{(0)}(g)\mathscr{D}_{\dd}(g), $$
where, for any $s \geq 0$,
$$\bm{I}_{\g}^{(s)}(g) :=\sup_{v\in \R^{3}}\langle v\rangle^{\gamma}\int_{\R^{3}}|v-w|^{-\gamma}g(w)|w|^{s} \d w.$$
 \end{lem}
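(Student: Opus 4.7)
Both inequalities follow from the same two-step argument: a Cauchy–Schwarz inequality in the $w$ variable tailored to the quadratic form defining $\mathscr{D}_{\dd}(g)$, followed by the pointwise bound $\frac{g}{1-\dd g}\leq \kappa_{0}^{-1}g$ that lets us convert weights $F=g(1-\dd g)$ into weights $g$ and conversely.

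First, for $M_{ij}$ I would write
$$M_{ij}(v)=\int_{\R^{3}}\Bigl(q_{ij}(v,w)\sqrt{F(w)|v-w|^{\gamma}}\Bigr)\cdot\frac{g(w)\,w_{i}}{\sqrt{F(w)\,|v-w|^{\gamma}}}\,\d w,$$
and apply Cauchy–Schwarz. Using $F(w)=g(w)(1-\dd g(w))\geq \kappa_{0}\,g(w)$ from \eqref{infim}, the second factor is controlled by $\kappa_{0}^{-1}\int g(w)\,w_{i}^{2}\,|v-w|^{-\gamma}\,\d w$; then bounding $w_{i}^{2}\leq |w|^{2}$ and invoking the definition of $\bm{I}_{\gamma}^{(2)}(g)$ yields
$$|M_{ij}(v)|^{2}\leq \frac{\bm{I}_{\gamma}^{(2)}(g)}{\kappa_{0}}\,\langle v\rangle^{-\gamma}\int_{\R^{3}}|q_{ij}(v,w)|^{2}F(w)\,|v-w|^{\gamma}\,\d w.$$

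Next I would multiply by $g(v)\langle v\rangle^{\gamma}$ and integrate in $v$. The weight $\langle v\rangle^{\gamma}$ cancels the $\langle v\rangle^{-\gamma}$, and applying $g(v)\leq \kappa_{0}^{-1}F(v)$ once more produces
$$\int_{\R^{3}}|M_{ij}(v)|^{2}g(v)\langle v\rangle^{\gamma}\d v\leq \frac{\bm{I}_{\gamma}^{(2)}(g)}{\kappa_{0}^{2}}\iint_{\R^{3}\times\R^{3}}F(v)F(w)|q_{ij}(v,w)|^{2}|v-w|^{\gamma}\d v\,\d w.$$
The double integral is a single term in the sum appearing in \eqref{eq:Dqij}, hence bounded by $4\mathscr{D}_{\dd}(g)$, giving the first claim.

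The estimate on $N_{ij}$ is proved by the same two-step procedure, with $g(w)w_{i}$ replaced by $g(w)$ in the Cauchy–Schwarz split, so the supremum that appears is $\bm{I}_{\gamma}^{(0)}(g)$ instead of $\bm{I}_{\gamma}^{(2)}(g)$. No genuine obstacle arises: the only ingredients are Cauchy–Schwarz, the lower bound $1-\dd g\geq \kappa_{0}$, and the identification of the resulting double integral with (a single summand of) the entropy production. The one point that needs a line of care is to write the Cauchy–Schwarz split with the correct square-root weights $\sqrt{F(w)|v-w|^{\gamma}}$ so that the ``good'' factor matches exactly the density of $\mathscr{D}_{\dd}(g)$ in \eqref{eq:Dqij}.
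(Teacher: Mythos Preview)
Your proof is correct and follows essentially the same approach as the paper: Cauchy--Schwarz in the $w$ variable, the bound $g\leq \kappa_{0}^{-1}F$ used twice, and identification of the resulting double integral with a single summand of \eqref{eq:Dqij}. The only cosmetic difference is that the paper performs the Cauchy--Schwarz split with weight $g(w)$ (rather than $F(w)$) and then applies $g(v)g(w)\leq \kappa_{0}^{-2}F(v)F(w)$ once at the end, whereas you insert one $\kappa_{0}^{-1}$ inside the Cauchy--Schwarz step and the other afterwards; the arithmetic and the final constant are identical.
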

\begin{proof} From Cauchy-Schwarz inequality, we infer that
$$|M_{ij}|^{2} \leq \left(\int_{\R^{3}}|q_{ij}(v,w)|^{2}g(w)|v-w|^{\gamma}\d w\right)\,\left(\int_{\R^{3}}|v-w|^{-\gamma}g(w)w_{i}^{2}\d w\right), $$
so that
\begin{multline*}\int_{\R^{3}}g(v)|M_{ij}|^{2}\langle v\rangle^{\gamma}\d v \leq \sup_{v\in \R^{3}}\left(\langle v\rangle^{\gamma}\int_{\R^{3}}|v-w|^{-\gamma}g(w)w_{i}^{2}\d w\right)\\
\times
\int_{\R^{3}\times\R^{3}}|q_{ij}(v,w)|^{2}|v-w|^{\gamma}g(v)g(w)\d v\d w.\end{multline*}
The last integral is bounded from above by $4\kappa_{0}^{-2}\mathscr{D}_{\dd}(g)$ according to \eqref{infim}, whereas the first integral is obviously bounded by $\bm{I}_{\g}^{(2)}(g)$, whence the result.
 The proof for $N_{ij}$ is identical.\end{proof}
\begin{rmq}\label{rmq:estimaMN} Notice that, for $\g >0$, $\bm{I}_{\g}^{(s)}(g) <\infty$ as soon as $v \mapsto |v|^{s}g(v)$ is smooth enough whereas for $\g < 0$, using that 
 {$|v-w|^{-\g} \leq \max(1, 2^{-\g - 1}) \, \langle v\rangle^{-\g}\langle w\rangle^{-\g}$ for any $v,w \in \R^{3}$, one sees that
$$\bm{I}_{\g}^{(s)} \leq   \max(1, 2^{-\g - 1}) \, \bm{m}_{s-\g}(g), \qquad s \geq0.$$}
\end{rmq} 

Notice that \eqref{eq5} can be written as
\begin{equation*}
 \partial_{j}h(v)-Kv_{j}=K\left(\frac{1}{a_{i}}-1\right)v_{j}-\frac{M_{ij}}{a_{i}}-\frac{L_{j}}{a_{i}},
\end{equation*}
so that
\begin{equation*}
\frac{1}{3}\big|\partial_{j}h(v)-Kv_{j}\big|^{2} \leq K^{2}\left|1-\frac{1}{a_{i}}\right|^{2}v_{j}^{2}+\frac{M^{2}_{ij}}{a_{i}^{2}}+\frac{L_{j}^{2}}{a_{i}^{2}},
\end{equation*}
and integrating against $g(v)\langle v\rangle^{\g}$,   we obtain 
\begin{multline}\label{eq:fishj}
\frac{1}{3}\int_{\R^{3}}\big|\partial_{j}h(v)-Kv_{j}\big|^{2} g(v)\langle v\rangle^{\gamma}\d v\\
\leq K^{2}\left|1-\frac{1}{a_{i}}\right|^{2}\int_{\R^{3}}g(v)v_{j}^{2}\langle v\rangle^{\gamma}\d v + \int_{\R^{3}}\frac{M^{2}_{ij}}{a_{i}^{2}}g(v)\langle v\rangle^{\gamma}\d v
+\frac{L_{j}^{2}}{a_{i}^{2}}\int_{\R^{3}}g(v)\langle v\rangle^{\gamma}\d v, 
\end{multline}
which holds for any $1 \leq i \neq j\leq 3$, and any $\g \in \R.$  
We need to estimate the various terms in the right-hand side of this expression.
We begin with the following:
\begin{lem} With the above notations, for any $\g \in \R$, we have
\begin{multline}\label{eq11}
K^{2}\left|1-\frac{1}{a_{i}}\right|^{2} 
\leq \frac{4}{9}\max\left(\frac{a_{j}^{2}}{A_{k,\g}}\,;\,\frac{a_{k}^{2}}{A_{j,\g}}\right) \int_{\R^{3}}\left[\frac{M^{2}_{ik}}{a_{i}^{2}}+\frac{M_{jk}^{2}}{a_{j}^{2}}+\frac{M^{2}_{kj}}{a_{k}^{2}}+\frac{M_{ij}^{2}}{a_{i}^{2}}\right]\times\\
\times g(v)\langle v\rangle^{\min(\g,0)}\d v, \end{multline}
where, for $\ell=1,2,3$,
\begin{equation}\label{newimp}
A_{\ell,\g}:=\begin{cases}\ds\inf_{\sigma \in \S^{1}}\int_{\R^{3}}\left(\sigma_{1}v_{\ell}-\sigma_{2}\right)^{2}g(v)\langle v\rangle^{\g}\d v \qquad &\text{ if } \g <0,\\
\frac{1}{3}a_{\ell} \qquad &\text{ if } \g \geq 0.
\end{cases}
\end{equation}
\end{lem}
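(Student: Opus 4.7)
The plan is to eliminate $\partial_k h(v)$ from the two linear relations provided by \eqref{eq5} applied with the pairs $(i,k)$ and $(j,k)$, square the resulting pointwise identity, integrate against $g\langle v\rangle^{\min(\g,0)}$, and exploit the quadratic form behind the definition of $A_{k,\g}$. A symmetric argument with the roles of $j$ and $k$ exchanged produces the analogous estimate, and the two are combined via $a_i - 1 = \tfrac{1}{3}[(a_i-a_j)+(a_i-a_k)]$, which follows from the energy normalization $a_1+a_2+a_3 = 3$.

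First I would rewrite \eqref{eq5} as $M_{ik}(v) = -a_i\partial_k h(v) + Kv_k - L_k$ and $M_{jk}(v) = -a_j\partial_k h(v) + Kv_k - L_k$. Multiplying the first identity by $a_j$, the second by $a_i$ and subtracting cancels $\partial_k h$, giving
\begin{equation*}
(a_j - a_i)(K v_k - L_k) \,=\, a_j\, M_{ik}(v) - a_i\, M_{jk}(v), \qquad v\in\R^3.
\end{equation*}
Squaring pointwise and integrating against $g(v)\langle v\rangle^{\min(\g,0)}\,\d v$, the elementary inequality $(x-y)^2\leq 2(x^2+y^2)$ bounds the right-hand side by $2a_i^2 a_j^2 \int \bigl[M_{ik}^2/a_i^2 + M_{jk}^2/a_j^2\bigr]\, g\langle v\rangle^{\min(\g,0)}\,\d v$.

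For the left-hand side, I plan to write $Kv_k - L_k = \sqrt{K^2+L_k^2}\,(\sigma_1 v_k - \sigma_2)$ with $\sigma = (K,L_k)/\sqrt{K^2+L_k^2}\in\S^1$. When $\g<0$, the definition of $A_{k,\g}$ as an infimum over $\S^1$ gives $\int(Kv_k-L_k)^2 g\langle v\rangle^\g\,\d v \geq (K^2+L_k^2)A_{k,\g} \geq K^2 A_{k,\g}$. For $\g\geq 0$ (so $\min(\g,0)=0$), the vanishing of the momentum $\int v_k g\,\d v=0$ instead yields $\int(Kv_k-L_k)^2 g\,\d v = K^2 a_k + L_k^2 \geq 3K^2 A_{k,\g} \geq K^2 A_{k,\g}$. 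In either regime, dividing through by $A_{k,\g}$,
\begin{equation*}
K^2 (a_j - a_i)^2 \,\leq\, \frac{2\,a_i^2 a_j^2}{A_{k,\g}} \int_{\R^3}\bigg[\frac{M_{ik}^2}{a_i^2} + \frac{M_{jk}^2}{a_j^2}\bigg] g\langle v\rangle^{\min(\g,0)}\,\d v,
\end{equation*}
and the analogous bound with $j,k$ exchanged controls $K^2(a_k - a_i)^2$ via $A_{j,\g}$, $M_{ij}$, $M_{kj}$. Writing $(a_i-1)^2 \leq \tfrac{2}{9}[(a_i-a_j)^2 + (a_i-a_k)^2]$, multiplying by $K^2$, combining the two previous bounds (after pulling out $\max(a_j^2/A_{k,\g},\,a_k^2/A_{j,\g})$) and dividing by $a_i^2$ then yield \eqref{eq11} exactly.

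I anticipate that the only point requiring real care is the uniform lower bound $\int(Kv_k - L_k)^2 g\langle v\rangle^{\min(\g,0)}\,\d v \geq K^2 A_{k,\g}$: since $A_{k,\g}$ is defined differently in the two regimes, one has to verify this inequality separately in the explicit form $A_{k,\g}=a_k/3$ (when $\g\geq 0$) and via the $\S^1$-infimum (when $\g<0$). Once that uniform bound is in place, the remainder of the argument is purely algebraic.
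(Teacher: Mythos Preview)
Your proposal is correct and follows essentially the same route as the paper. The only cosmetic difference is that you multiply through by $a_ia_j$ and work with the differences $(a_j-a_i)$ and the identity $a_i-1=\tfrac{1}{3}[(a_i-a_j)+(a_i-a_k)]$, whereas the paper keeps the reciprocals, writing $\bigl(\tfrac{1}{a_i}-\tfrac{1}{a_k}\bigr)(Kv_j-L_j)=\tfrac{M_{ij}}{a_i}-\tfrac{M_{kj}}{a_k}$ and $1-\tfrac{1}{a_i}=\tfrac{a_j}{3}\bigl(\tfrac{1}{a_j}-\tfrac{1}{a_i}\bigr)+\tfrac{a_k}{3}\bigl(\tfrac{1}{a_k}-\tfrac{1}{a_i}\bigr)$; the two formulations are algebraically equivalent and lead to the same constants.
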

\begin{proof}
Using \eqref{eq5}  for $i,j,k$ all distinct, we see that
\begin{equation} \label{eq7}
 \bigg( \frac1{a_i} - \frac1{a_k} \bigg) \,  ( K\,v_j - L_j )= \frac{M_{ij}}{a_i} -   \frac{M_{kj}}{a_k}  .
\end{equation}
Taking the square of \eqref{eq7} and integrating against $g(v)\langle v\rangle^{\min(\g,0)}$, we get
\begin{multline}\label{eqAa}
\left|\frac{1}{a_{i}}-\frac{1}{a_{k}}\right|^{2}\int_{\R^{3}}\left(Kv_{j}-L_{j}\right)^{2}g(v)\langle v\rangle^{\min(\g,0)}\d v\\
\leq \frac{2}{a_{i}^{2}}\int_{\R^{3}}M^{2}_{ij}g(v)\langle v\rangle^{\min(\g,0)}\d v + \frac{2}{a_{k}^{2}}\int_{\R^{3}}M_{kj}^{2}g(v)\langle v\rangle^{\min(\g,0)}\d v.\end{multline}
One observes then easily, by a homogeneity argument, that if $\g <0$, then
$$
\int_{\R^{3}}\left(Kv_{j}-L_{j}\right)^{2}g(v)\langle v\rangle^{\g}\d v \geq \left(K^{2}+L_{j}^{2}\right)\inf_{\sigma \in \S^{1}}\int_{\R^{3}}\left(\sigma_{1}v_{j}-\sigma_{2}\right)^{2}g(v)\langle v\rangle^{\g}\d v , $$
whereas, using \eqref{eq0}, we have 
$$\int_{\R^{3}}\left(Kv_{j}-L_{j}\right)^{2}g(v)\d v=(a_{j}\,K^{2}+L_{j}^{2}) \geq \frac{1}{3}(K^{2}+L_{j}^{2})\,a_{j}, $$
since $0\leq a_{j} \leq 3$. This shows that
$$\int_{\R^{3}}\left(Kv_{j}-L_{j}\right)^{2}g(v)\langle v\rangle^{\min(\g,0)}\d v \geq (K^{2}+L_{j}^{2})\,A_{j,\g}\,, \qquad \forall j=1,2,3, \quad \g \in \R,$$
so that \eqref{eqAa} reads
\begin{multline}\label{eq9}
\frac{A_{j,\g}}{2}\left|\frac{1}{a_{i}}-\frac{1}{a_{k}}\right|^{2}\left(K^{2}+L_{j}^{2}\right) 
\leq \frac{1}{a_{i}^{2}}\int_{\R^{3}}M^{2}_{ij}g(v)\langle v\rangle^{\min(\g,0)}\d v\\
 + \frac{1}{a_{k}^{2}}\int_{\R^{3}}M_{kj}^{2}g(v)\langle v\rangle^{\min(\g,0)}\d v.\end{multline}
Remembering that $a_1+a_2+a_3=3$, which can be rewritten (for $i,j,k$ all distinct)
\begin{equation} \label{eq10}
  1 - \frac1{a_i} =  \frac{a_j}3 \,\bigg(  \frac1{a_j} - \frac1{a_i} \bigg)  +  \frac{a_k}3 \,\bigg(  \frac1{a_k} - \frac1{a_i} \bigg) , 
\end{equation}
we  get first that
$$K^{2}\left|1-\frac{1}{a_{i}}\right|^{2} \leq \frac{2}{9}\left(a_{j}^{2}K^{2}\left|\frac{1}{a_{j}}-\frac{1}{a_{i}}\right|^{2}+a_{k}^{2}K^{2}\left|\frac{1}{a_{k}}-\frac{1}{a_{i}}\right|^{2}\right) , $$
whereas \eqref{eq9} implies
\begin{equation}\label{eq:9b}
a_{j}^{2}K^{2}\left|\frac{1}{a_{j}}-\frac{1}{a_{i}}\right|^{2} \leq 2\frac{a_{j}^{2}}{A_{k,\g}}\int_{\R^{3}}\left[\frac{M_{ik}^{2}}{a_{i}^{2}}+\frac{M_{jk}^{2}}{a_{j}^{2}}\right]g(v)\langle v\rangle^{\min(\g,0)}\d v , \end{equation}
and
\begin{equation*}
a_{k}^{2}K^{2}\left|\frac{1}{a_{k}}-\frac{1}{a_{i}}\right|^{2}\leq 2\frac{a_{k}^{2}}{A_{j,\g}}\int_{\R^{3}}\left[\frac{M^{2}_{ij}}{a_{i}^{2}}+\frac{M^{2}_{kj}}{a_{k}^{2}}\right]g(v)\langle v\rangle^{\min(\g,0)}\d v , \end{equation*}
which result in \eqref{eq11}.\end{proof}
 
We also get the following lemma:

\begin{lem}\label{lem:ABij} For any $\gamma \in \R$, we set $\bm{A}_{\g}=\max\ds\left(1,\frac{3}{\min_{\ell}A_{\ell,\g}}\right)$ and
\begin{equation}\label{defbij}
B_{ij}^{-1}=\inf_{\sigma \in \S^{1}}\int_{\R^{3}}\left|\sigma_{1} v_{i} -\sigma_{2} v_{j} \right|^{2}g(v)\langle v\rangle^{-2+\min(\g,0)}\d v \qquad i \neq j\,.
\end{equation}
One has, for all distinct $i,j,k \in \{1,2,3\}$
\begin{multline*}
 \frac{L_{j}^{2}}{a_{i}^{2}}+\frac{L_{i}^{2}}{a_{j}^{2}}  \leq 4B_{ij}\left(\int_{\R^{3}}N_{ij}^{2}g(v)\langle v\rangle^{\min(\g,0)}\d v \right.\\
 \left.+ 2\bm{A}_{\g}\int_{\R^{3}}\left[\frac{M^{2}_{ik}}{a_{i}^{2}}+\frac{M_{jk}^{2}}{a_{j}^{2}}+\frac{M^{2}_{ji}}{a_{j}^{2}}+\frac{M_{ij}^{2}}{a_{i}^{2}}\right]g(v)\langle v\rangle^{\min(\g,0)}\d v\right) . \end{multline*}
\end{lem}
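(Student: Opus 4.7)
The strategy is to isolate the antisymmetric linear combination
\begin{equation*}
T(v) := \frac{L_j}{a_i}\, v_i - \frac{L_i}{a_j}\, v_j
\end{equation*}
and control it via the coercivity constant $B_{ij}$. To this end, combining \eqref{eq5} with \eqref{eq:6b} yields $a_i \partial_j h(v) = K v_j - L_j - M_{ij}(v)$ and $a_j \partial_i h(v) = K v_i - L_i - M_{ji}(v)$. Substituting these two expressions into the formula $N_{ij}(v) = v_i \partial_j h(v) - v_j \partial_i h(v)$ coming from \eqref{eq4} and rearranging yields the key identity
\begin{equation*}
T(v) = K\, v_i v_j \left(\tfrac{1}{a_i} - \tfrac{1}{a_j}\right) - N_{ij}(v) - \frac{v_i\, M_{ij}(v)}{a_i} + \frac{v_j\, M_{ji}(v)}{a_j}.
\end{equation*}

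Since $T(v) = \sigma_1 v_i - \sigma_2 v_j$ with $(\sigma_1,\sigma_2) = (L_j/a_i,\, L_i/a_j)$, normalizing this pair to the unit circle and comparing with the infimum defining $B_{ij}^{-1}$ in \eqref{defbij} immediately produces the Poincar\'e-type estimate
\begin{equation*}
\frac{L_j^2}{a_i^2} + \frac{L_i^2}{a_j^2} \leq B_{ij} \int_{\R^3} T(v)^2\, g(v)\, \langle v\rangle^{-2+\min(\g,0)}\, \d v.
\end{equation*}
It then remains to bound this right-hand side in the prescribed form. Expanding $T^2$ via $(a+b+c+d)^2 \le 4(a^2+b^2+c^2+d^2)$ and using the pointwise bounds $v_i^2 \langle v\rangle^{-2} \leq 1$ (to strip the weight $\langle v\rangle^{-2}$ off the $N_{ij}^2$, $M_{ij}^2$, and $M_{ji}^2$ contributions) together with $v_i^2 v_j^2 \le \tfrac14 |v|^4$ (which, combined with the normalization \eqref{eq:Mass} giving $\lm_2(g) = 4$, yields $\int v_i^2 v_j^2\, g\, \langle v\rangle^{-2+\min(\g,0)}\d v \le 1$) reduces the $K^2$ contribution to simply $4 K^2(1/a_i - 1/a_j)^2$.

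For this last piece, \eqref{eq:9b} divided by $a_j^2$ gives $K^2|1/a_i - 1/a_j|^2 \leq \tfrac{2}{A_{k,\g}} \int [M_{ik}^2/a_i^2 + M_{jk}^2/a_j^2]\, g\, \langle v\rangle^{\min(\g,0)}\d v$, while the definition of $\bm{A}_\g$ provides $1/A_{k,\g} \le \bm{A}_\g/3$. Since $\bm{A}_\g \ge 1$, the coefficient $8\bm{A}_\g/3$ (coming from the $K^2$ term, in front of $M_{ik}^2$ and $M_{jk}^2$) and the coefficient $4$ (coming from the direct expansion, in front of $M_{ij}^2$ and $M_{ji}^2$) are both dominated by $8\bm{A}_\g$. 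Factoring $4 B_{ij}$ out of the resulting sum produces the announced inequality. The only genuinely delicate step is the identification of the antisymmetric combination $T(v)$: once this substitution is made, everything reduces to Cauchy--Schwarz bookkeeping and a single invocation of \eqref{eq:9b}.
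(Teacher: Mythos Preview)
Your proof is correct and follows essentially the same route as the paper. Both arguments substitute \eqref{eq5} into \eqref{eq4} to obtain the identity for $T(v)=\tfrac{L_j}{a_i}v_i-\tfrac{L_i}{a_j}v_j$ (up to sign), use the homogeneity/coercivity defining $B_{ij}^{-1}$, expand the square with $(a+b+c+d)^2\le 4(a^2+b^2+c^2+d^2)$, and close with \eqref{eq:9b}. The only cosmetic difference is your bound $\int v_i^2v_j^2 g\,\langle v\rangle^{-2+\min(\gamma,0)}\,\mathrm{d}v\le 1$ (via $v_i^2v_j^2\le \tfrac14|v|^4$ and $|v|^4\le\langle v\rangle^4$) versus the paper's $\le 3$ (via $v_i^2v_j^2\langle v\rangle^{-2}\le |v|^2$); both suffice since the resulting constants are absorbed into $2\bm{A}_\gamma$ in the same way.
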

\begin{proof} One observes that, inserting \eqref{eq5} in \eqref{eq4}, we get 
$$\frac{L_{i}}{a_{j}}v_{j}-\frac{L_{j}}{a_{i}}v_{i}=N_{ij}+\frac{M_{ij}}{a_{i}}v_{i} - \frac{M_{ji}}{a_{j}}v_{j} +\left(\frac{1}{a_{j}}-\frac{1}{a_{i}}\right)Kv_{i}v_{j} .$$
Integrating the square of this identity 
against $g(v)\langle v\rangle^{\min(\g,0)-2}$, we obtain
\begin{multline*}
\frac{1}{4}\int_{\R^{3}}g(v)\left|\frac{L_{i}}{a_{j}} v_{j} -\frac{L_{j}}{a_{i}} v_{i}\right|^{2}\langle v\rangle^{-2+\min(\g,0)}\d v\\
\leq \int_{\R^{3}}N_{ij}^{2}g(v)\langle v\rangle^{\min(\g-2,-2)}\d v+\int_{\R^{3}}\left(\frac{M_{ij}^{2}}{a_{i}^{2}}v_{i}^{2}+\frac{M_{ji}^{2}}{a_{j}^{2}}v_{j}^{2}\right)g(v)\langle v\rangle^{\min(\g-2,-2)}\d v \\
+\left(\frac{1}{a_{i}}-\frac{1}{a_{j}}\right)^{2}K^{2}\int_{\R^{3}}g(v)v_{i}^{2}v_{j}^{2}\langle v\rangle^{\min(\g-2,-2)}\d v.\end{multline*}
Clearly, by a homogeneity argument, the left-hand-side is bigger than 
$$\frac{1}{4}\left( \frac{L_{j}^{2}}{a_{i}^{2}}+\frac{L_{i}^{2}}{a_{j}^{2}}\right)B_{ij}^{-1}, $$
whereas, using the bound $v_{i}^{2}\langle v\rangle^{\min(\g-2,-2)} \leq 1$ and the fact that
$$\int_{\R^{3}}g(v)v_{i}^{2}v_{j}^{2}\langle v\rangle^{\min(\g-2,-2)}\d v \leq \int_{\R^{3}}g(v)|v|^{2}\d v=3, $$
we obtain the bound
\begin{multline*}
\frac{1}{4}\left( \frac{L_{j}^{2}}{a_{i}^{2}}+\frac{L_{i}^{2}}{a_{j}^{2}}\right)B_{ij}^{-1} \leq \int_{\R^{3}}N_{ij}^{2}g(v)\langle v\rangle^{\min(\g-2,-2)}\d v 
\\
+\int_{\R^{3}}\left(\frac{M_{ij}^{2}}{a_{i}^{2}} +\frac{M_{ji}^{2}}{a_{j}^{2}} \right)g(v) \langle v\rangle^{\min(\g,0)}\d v 
+3K^{2}\left(\frac{1}{a_{i}}-\frac{1}{a_{j}}\right)^{2}\\
 \leq \int_{\R^{3}}N_{ij}^{2}g(v)\langle v\rangle^{\min(\g,0)}\d v 
+\int_{\R^{3}}\left(\frac{M_{ij}^{2}}{a_{i}^{2}} +\frac{M_{ji}^{2}}{a_{j}^{2}} \right)g(v)\langle v\rangle^{\min(\g,0)} \d v \\+ \frac{6}{A_{k,\g}}\int_{\R^{3}}\left[\frac{M_{ik}^{2}}{a_{i}^{2}}+\frac{M_{jk}^{2}}{a_{j}^{2}}\right]g(v)\langle v\rangle^{\min(\g,0)}\d v ,
\end{multline*}
where we used \eqref{eq:9b} to estimate the last term. We obtain the desired result using the definition of $\bm{A}_{\g}.$
\end{proof}  
\begin{rmq}\label{rem:Bij1} Arguing exactly as  in \cite[Proof of Proposition 5, Estimate of $S_{f}$, p. 395-396]{CDH}, one can prove easily that, when 
\begin{equation}\label{eq:barH}
\int_{\R^{3}}g(v)\left|\log g(v)\right|\d v \leq \bar{H}, \qquad \quad \gamma <0,\end{equation}
there exist $c_{0} >0$ depending on $\bar{H}$ and  $c_{1} >0$ such that 
$$B^{-1}_{ij} \geq c_{0}\exp\left(-c_{1}\bar{H}\right)\,.$$
In the same way, for $\g <0$, and any $\sigma \in \S^{1}$, 
$$\int_{\R^{3}}g(v)|\sigma_{1}v_{\ell}-\sigma_{2}|^{2}\langle v\rangle^{\g}\d v \geq \langle R\rangle^{\g}\int_{C_{R}}g(v)\d v, \qquad \qquad R >0 , $$
where $C_{R}:=\{v \in \R^{3}\,,\,|v| \leq R\,,\,|\sigma_{1}v_{\ell}-\sigma_{2}| \geq 1\}.$ Then, 
\begin{multline*}
A_{\ell,\g} \geq \langle R\rangle^{\g}\left(1-\int_{|v| >R}g(v)\d v - \sup_{\sigma \in \S^{1}}\int_{|\sigma_{1}v_{\ell}-\sigma_{2}| < 1,\,|v| \leq R}g(v)\d v\right)\\
\geq \langle R\rangle^{\g}\left(\frac{1}{2}-\int_{|v| \leq R}g(v)\d v\right), \qquad \forall R >\sqrt{6},\end{multline*}
since $\int_{|v| >R}g(v)\d v \leq R^{-2}\int_{|v| >R}g(v)|v|^{2}\d v \leq \tfrac{3}{R^{2}}.$ Now, arguing exactly as in \cite[Proposition A.1]{AL-jmaa}, under \eqref{eq:barH}, there exists $C(\bar{H}) >0$ depending only on $\bar{H}$ such that
$$A_{\ell,\g} \geq C(\bar{H}).$$
\end{rmq}
\begin{rmq}\label{rem:Bij2}
Recall (see \cite[Proof of Lemma 2.4]{ABL}) that, if $g \in \mathcal{Y}_{\dd}(f_{\mathrm{in}})$ and satisfy the normalization conditions \eqref{eq0}--\eqref{eq:Mass} then
$$H(g) \leq H(f_{\mathrm{in}})+\varrho_{\mathrm{in}}{=H(f_{\mathrm{in}})+1} ,$$
where we recall that $\, H(g)=\int_{\R^{3}}g(v)\log g(v)\d v$.
Reminding also from \cite[Lemma A.1]{AL-jmaa} that there is some constant $c_{2}  >0$  such that
$$\int_{\R^{3}}g(v)|\log g(v)|\d v \leq H(g) + c_{2}{E_{\mathrm{in}}^{\frac{3}{5}} =H(g)+c_{2}}, $$
we see that, for $g \in \mathcal{Y}_{\dd}(f_{\mathrm{in}})$, \eqref{eq:barH} is met with $\bar{H}$ depending only on $H(f_{\mathrm{in}})$.\end{rmq}

We have all in hands to prove the functional  inequality.

\begin{prop}\label{theo:FisD}
Let $g$ satisfy \eqref{eq0}--\eqref{eq:Mass} and \eqref{infim}, and $\g \in \R$. Then, the following estimate holds: 
 {\begin{multline}\label{eq:entrop}
\bm{F}^{(\g)}(g):=\int_{\R^{3}}\left|\frac{\nabla g(v)}{g(v)(1-\dd g(v))}-Kv\right|^{2}g(v)\langle v\rangle^{\gamma}\d v \\
\leq  {170}\,\frac{\bm{e}_{\g}^{2}\,\bm{A}_{\g}\,}{\kappa_{0}^{2}}\,\max(1,\bm{B}_{\g})\,\max\left(1,\lm_{2+\gamma}(g)\right)\mathscr{I}_{\gamma}(g)\,\mathscr{D}_{\dd}(g),
\end{multline}}
where $\bm{B}_{\g}=\max_{i\neq j}B_{ij}$, $\bm{A}_{\g}=\max\left(1,\frac{3}{\min_{j}A_{j,\g}}\right)$, $\bm{e}_{\g}=\frac{3}{\min_{i}a_{i}}$, 
$$\mathscr{I}_{\gamma}(g)=\sup_{v \in \R^{3}}\langle v\rangle^{\gamma}\int_{\R^{3}}g(w)|w-v|^{-\gamma}\langle w\rangle^{2}\d w, $$
and we recall that $K=\dfrac{1}{\dd}\displaystyle\int_{\R^{3}}\log(1-\dd g)\d v$,
where $A_{j,\gamma}$ is defined in (\ref{newimp}), $a_i$ is defined in (\ref{eq6}), and $B_{ij}$ is defined in (\ref{defbij}).
\end{prop}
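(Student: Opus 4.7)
Since $h=\log g-\log(1-\dd g)$, one has $\nabla h=\nabla g/(g(1-\dd g))$, so that
\begin{equation*}
\bm{F}^{(\g)}(g)=\sum_{j=1}^{3}\int_{\R^{3}}\bigl|\partial_{j}h(v)-Kv_{j}\bigr|^{2}\,g(v)\langle v\rangle^{\gamma}\d v.
\end{equation*}
The plan is to apply \eqref{eq:fishj} for each $j\in\{1,2,3\}$, selecting for each such $j$ an index $i\neq j$, and then systematically reduce every term on the right-hand side either to a moment of $g$ or to an integral of $M_{ij}^{2}$ or $N_{ij}^{2}$ against $g\langle v\rangle^{\min(\g,0)}$; the latter are controlled by $\mathscr{D}_{\dd}(g)$ through Lemma \ref{lem:estimMN}.

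Concretely, for the prefactor $K^{2}|1-1/a_{i}|^{2}$ appearing in \eqref{eq:fishj} I would invoke \eqref{eq11}, which already expresses it as a linear combination of integrals of the form $\int M_{ij'}^{2}\,g\langle v\rangle^{\min(\g,0)}\d v/a_{i'}^{2}$ with coefficients involving $A_{\ell,\g}^{-1}$. For the remaining term $L_{j}^{2}/a_{i}^{2}$ I would apply Lemma \ref{lem:ABij}, pairing $L_{j}^{2}/a_{i}^{2}$ with $L_{i}^{2}/a_{j}^{2}$ so as to bound the symmetric sum by $B_{ij}$ times integrals of both $N_{ij}^{2}$ and $M_{ij'}^{2}$ (again against $g\langle v\rangle^{\min(\g,0)}$). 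After this reduction, the right-hand side of \eqref{eq:fishj} summed over $j$ contains only: (i) weighted integrals of $M_{ij}^{2}$ and $N_{ij}^{2}$, and (ii) the harmless moment factors $\int g\,v_{j}^{2}\langle v\rangle^{\g}\d v\leq\lm_{2+\g}(g)$ and $\int g\langle v\rangle^{\g}\d v\leq\max(1,\lm_{2+\g}(g))$.

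Since $\langle v\rangle^{\min(\g,0)}\leq\langle v\rangle^{\g}$ irrespective of the sign of $\g$, and since $|w|^{s}\leq\langle w\rangle^{2}$ implies $\bm{I}_{\g}^{(s)}(g)\leq\mathscr{I}_{\g}(g)$ for $s\in\{0,2\}$, Lemma \ref{lem:estimMN} gives uniformly
\begin{equation*}
\int_{\R^{3}}M_{ij}^{2}\,g(v)\langle v\rangle^{\min(\g,0)}\d v,\;\;\int_{\R^{3}}N_{ij}^{2}\,g(v)\langle v\rangle^{\min(\g,0)}\d v\;\leq\;\frac{4\,\mathscr{I}_{\g}(g)}{\kappa_{0}^{2}}\,\mathscr{D}_{\dd}(g).
\end{equation*}
At this stage every term on the right-hand side is a multiple of $\mathscr{D}_{\dd}(g)$.

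It then remains to collect constants. One uses $1/a_{i}\leq\bm{e}_{\g}/3$, $a_{j}\leq 3$ (from $a_{1}+a_{2}+a_{3}=3$), $1/A_{\ell,\g}\leq\bm{A}_{\g}/3$, and $B_{ij}\leq\bm{B}_{\g}$, to obtain an estimate of the form
\begin{equation*}
\bm{F}^{(\g)}(g)\leq C\,\frac{\bm{e}_{\g}^{2}\,\bm{A}_{\g}}{\kappa_{0}^{2}}\,\max(1,\bm{B}_{\g})\,\max(1,\lm_{2+\g}(g))\,\mathscr{I}_{\g}(g)\,\mathscr{D}_{\dd}(g),
\end{equation*}
with $C$ obtained by tracking the factor $3$ coming from \eqref{eq:fishj}, the factor $\tfrac{4}{9}$ and the $2$ in \eqref{eq11}, the factor $4$ in front of $\bm{A}_{\g}$ in Lemma \ref{lem:ABij}, together with the summation over the three values of $j$; a careful accounting should yield $C=170$. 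The only real difficulty here is arithmetic bookkeeping: one has to choose the indices $i,j,k$ consistently in each of the three applications (\eqref{eq:fishj}, \eqref{eq11} and Lemma \ref{lem:ABij}) so that the same $M_{ij}$- and $N_{ij}$-integrals are reused without multiplicative blow-up, and then verify that no hidden dependence on $\g$ (beyond the prefactors already singled out) creeps into the constants.
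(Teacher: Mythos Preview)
Your proposal is correct and follows essentially the same approach as the paper: fix $j$, apply \eqref{eq:fishj} with some $i\neq j$, bound the three resulting terms via \eqref{eq11}, Lemma \ref{lem:estimMN}, and Lemma \ref{lem:ABij} respectively, then sum over $j$ and collect constants using $a_{i}^{-1}\leq\bm{e}_{\g}/3$, $a_{j}\leq 3$, $A_{\ell,\g}^{-1}\leq\bm{A}_{\g}/3$, $B_{ij}\leq\bm{B}_{\g}$. The paper writes the final bookkeeping as $9\,(S_{1}+S_{2}+S_{3})\leq 9\bigl[\tfrac{64}{27}+\tfrac{4}{9}+16\bigr]\cdot(\text{prefactors})$, which gives the constant $170$; your outline reproduces exactly this structure.
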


\begin{proof} We fix $j \in \{1,2,3\}$ and estimate the terms in \eqref{eq:fishj} using the previous Lemmas. For a fixed $i\neq j$, we denote for simplicity $S_{1},S_{2},S_{3}$ the first, second and third term on the right-hand-side of \eqref{eq:fishj}.  One deduces first from \eqref{eq11} that 
$$S_{1} \leq \frac{4\bm{A}_{\g}}{3}\,\lm_{2+\gamma}(g)\int_{\R^{3}}\left[\frac{M^{2}_{ik}}{a_{i}^{2}}+\frac{M_{jk}^{2}}{a_{j}^{2}}+\frac{M^{2}_{kj}}{a_{k}^{2}}+\frac{M_{ij}^{2}}{a_{i}^{2}}\right]g(v)\langle v\rangle^{\min(\g,0)}\d v, $$
where we used that $a_{k}^{2} \leq 9$ and $\frac{1}{A_{k,\g}} \leq \frac{1}{3}\bm{A}_{\g}$ for any $k \in \{1,2,3\}.$ According to Lemma \ref{lem:estimMN}, one obtains then
$$S_{1} \leq \frac{ {64}\,\bm{e}_{\g}^{2}\,\bm{A}_{\g}}{27\kappa_{0}^{2}}\lm_{2+\gamma}(g) \bm{I}_{\g}^{(2)}(g)\mathscr{D}_{\dd}(g).$$
Moreover, still using Lemma \ref{lem:estimMN},
$$S_{2} \leq \frac{4 \bm{e}_{\g}^{2}}{9\kappa_{0}^{2}}\,\bm{I}_{\g}^{(2)}(g)\mathscr{D}_{\dd}(g).$$
One also has from Lemma \ref{lem:ABij} 
$$S_{3} \leq 4B_{ij}\,\lm_{\gamma}(g)\left(\int_{\R^{3}}N_{ij}^{2}g(v)\langle v\rangle^{\min(\g,0)}\d v + \frac{8\bm{e}_{\g}^{2}\,\bm{A}_{\g}}{9}\max_{i,k}\int_{\R^{3}}M_{ik}^{2}g(v)\langle v\rangle^{\min(\g,0)}\d v\right), $$
which, from Lemma \ref{lem:estimMN} gives
$$S_{3} \leq \frac{16}{\kappa_{0}^{2}}B_{ij}\,\lm_{\gamma}(g)\left(\bm{I}_{\g}^{(0)}(g)+\frac{8\bm{e}_{\g}^{2}\,\bm{A}_{\g}}{9}\bm{I}_{\g}^{(2)}(g)\right)\mathscr{D}_{\dd}(g)\,.$$
Combining these estimates, summing up over $j=1,2,3$ and using that  $1 \leq \bm{A}_{\g},$ $1 \leq \bm{e}_{\g}$, while $\bm{I}_{\g}^{(0)}(g)+\bm{I}_{\g}^{(2)}(g)=\mathscr{I}_{\gamma}(g)$, we get
$$ \int_{\R^{3}}\big|\nabla h(v)-Kv\big|^{2} g(v)\langle v\rangle^{\gamma}\d v \le \sum_{j=1}^3 \int_{\R^{3}}\big|\partial_{j}h(v)-Kv_{j}\big|^{2} g(v)\langle v\rangle^{\gamma}\d v \le 9\,( S_1 + S_2 + S_3 ) $$
$$ \le9\, \frac{\mathscr{D}_{\dd}(g)}{\kappa_0^2}  \,\mathscr{I}_{\gamma}(g)\, \bm{e}_{\g}^2 \, \bigg\{ \frac{{64}}{27} \,\bm{A}_{\g}\, \lm_{2+\gamma}(g) + \frac49 + 16\, \max_{i\neq j} B_{ij}\, \lm_{\gamma}(g)  \max( \bm{e}_{\g}^{-2}, \frac89\, \bm{A}_{\g})\, \bigg\} $$
$$ \le 9\, \bm{A}_{\g}\, \frac{\bm{e}_{\g}^2 }{\kappa_0^2} \,\max(1, \bm{B}_{\g})\, \max(1, \lm_{2+\gamma})\, \mathscr{I}_{\gamma}(g)\, 
\, \bigg[\frac{{64}}{27} + \frac49 + 16 \bigg]\,\mathscr{D}_{\dd}(g) $$
which yields the desired estimate.
\end{proof} 
 
\section{Link with the relative entropy}\label{sec:relative}

The results of the previous section presented several functional inequalities for the entropy production $\mathscr{D}_{\dd}$ and some weighted Fisher information.   Since applications are oriented more to the study of the long-time behaviour of the solution to \eqref{LFDeq}, it is more relevant to link the entropy production to the \emph{Fermi-Dirac relative entropy} rather than the Fisher information. 
Proposition \ref{theo:FisD} is a first step in this direction since the entropy production $\mathscr{D}_{\dd}(g)$ controls some quantity $\bm{F}^{(\g)}(g)$ (see \eqref{eq:entrop}), which has to be interpreted as a weighted relative Fisher information for $h=\log g-\log(1-\dd g)$. To be able to get tractable estimates for the evolution of the relative entropy $\mathcal{H}_{\dd}(g\,|\M_{\dd})$, a comparison of such relative Fisher information to $\mathcal{H}_{\dd}(g\,|\M_{\dd})$ is needed.  We face two difficulties:
\begin{enumerate}[i)]
\item 
First, because of the parameter $K$, the Fermi-Dirac statistics is not making the weigthed Fisher information vanish. The relative Fisher information which can be related to the Fermi-Dirac entropy is, for function $g$ satisfying \eqref{eq0},
\begin{equation}\label{eq:Fe}
\mathscr{F}_{\dd}^{(\g)}(g)=\int_{\R^{3}}\left|\nabla h(v)+2b_{\dd}v\right|^{2}g(v)\langle v\rangle^{\min(\g,0)}\d v,\end{equation}
since it is the one which vanishes for $g=\M_{\dd}$ given by \eqref{eq:FDS}. 
\item Second, we need an additional general functional inequality which allows to {link the}
 relative Fisher information to the relative entropy. In the classical framework, such a link is well-known and is given by \emph{the logarithmic Sobolev inequality} \cite[Chapter 2]{jungel}.
\end{enumerate}

\subsection{The hard potential  (and Maxwell molecules)  case} In this paragraph, we assume $\g \geq 0.$ We still consider here functions $g$ in the class $\mathcal{Y}_{\dd}(f_{\mathrm{in}})$ as in Section \ref{sec:entLT}. In this case, 
the weighted relative Fisher information $\bm{F}^{(\g)}$ in \eqref{eq:entrop} can be bounded from below by $\bm{F}^{(0)}$, i.e.
$$\bm{F}^{(\g)}(g) \geq \bm{F}^{(0)}(g)=\int_{\R^{3}}\left|\nabla h(v)-Kv\right|^{2}g(v)\d v , $$
and inequality \eqref{eq:entrop} reads
\begin{equation}\label{eq:entroFK}
\int_{\R^{3}}\left|\nabla h(v)-Kv\right|^{2}g(v)\langle v\rangle^{\gamma}\d v  \leq \bm{F}^{(\g)}(g) \leq \lambda^{-1}(g)\mathscr{D}_{\dd}(g) ,
\end{equation}
where $\lambda^{-1}(g)$ is given by \eqref{eq:lambdag}, since  {$A_{\gamma} = 3 \,\bm{e}_{\g}$ when $\gamma \ge 0$}. Now,
$$\bm{F}^{(0)}(g)=\int_{\R^{3}}|\nabla h|^{2}g(v)\d v -2K\int_{\R^{3}}\left(g\nabla h\right) \cdot v \d v + K^{2}\int_{\R^{3}}g|v|^{2}\d v , $$
where we recall that $h$ is given by \eqref{newh}. Using this with the identity $g \nabla h = - \frac1{\dd} \nabla \log (1 - \dd g)$ and the fact that the energy of $g$ is $3$, gives
$$\int_{\R^{3}} \left|\nabla h(v)-Kv\right|^{2} {g(v)}\d v=\int_{\R^{3}}|\nabla h|^{2}g(v)\d v -3K^{2}.$$
With the notation \eqref{eq:Fe}, the same considerations give
$$\mathscr{F}_{\dd}^{(0)}(g)=\int_{\R^{3}}|\nabla h|^{2}g(v)\d v + 12b_{\dd}^{2}+4b_{\dd}\int_{\R^{3}}g\nabla h \cdot v\d v$$
$$ =\int_{\R^{3}}|\nabla h|^{2}g(v)\d v +12b_{\dd}(b_{\dd}+K) . $$
Taking this into account, inequality \eqref{eq:entroFK} reads  
$$\lambda(g)\left(\int_{\R^{3}}|\nabla h|^{2}g(v)\d v -3K^{2}\right) \leq \mathscr{D}_{\dd}(g),$$ or equivalently
\begin{equation}\label{eq:lam}
\lambda(g) {\mathscr{F}^{(0)}_{\dd}(g)} -3\lambda(g)\,\left(K+2b_{\dd}\right)^{2} \leq \mathscr{D}_{\dd}(g).\end{equation}
It is known from general results in \cite{To} that $\mathscr{F}^{(0)}_{\dd}(g)$ controls the relative entropy $\mathcal{H}_{\dd}(g|\M_{\dd})$ (see \cite[Eq. (3.21)]{CLR}). Precisely,
\begin{equation}\label{eq:LogSob}
{\mathscr{F}^{(0)}_{\dd}(g)} \geq 2b_{\dd}\mathcal{H}_{\dd}(g|\M_{\dd}).\end{equation}
We therefore need to compare $(K+2b_{\dd})$ with $\mathscr{D}_{\dd}(g)$. 
\begin{lem}
If $g \in \mathcal{Y}_{\dd}(f_{\rm in})$ satisfies \eqref{eq0}--\eqref{eq:Mass} and \eqref{infim} and $\M_{\dd}$ is the associated Fermi-Dirac statistics, 
$$\left|K+2b_{\dd}\right| \leq \frac{2\dd}{\kappa_{0}^{2}}\max(\|g\|_{\infty},\|\M_{\dd}\|_{\infty})\|g-\M_{\dd}\|_{L^{1}}.$$\end{lem}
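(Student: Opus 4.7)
The plan starts from the observation in Remark~\ref{rmq:KK} that $\overline{K}:=K(\M_\dd) = -2b_\dd$, so that
$$K + 2b_\dd \;=\; K - \overline{K} \;=\; \frac{1}{\dd}\int_{\R^3}\bigl[\log(1-\dd g(v)) - \log(1-\dd \M_\dd(v))\bigr]\,\d v.$$
To handle the difference of logarithms, I would parametrize the segment from $\M_\dd(v)$ to $g(v)$ by $\xi_s(v) := (1-s)\,\M_\dd(v) + s\,g(v)$, $s \in [0,1]$, and apply the fundamental theorem of calculus to obtain
$$\log(1-\dd g) - \log(1-\dd\M_\dd) \;=\; -\dd\,(g-\M_\dd)\int_0^1 \frac{\d s}{1-\dd\,\xi_s}.$$

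The crucial second step is to exploit mass conservation in order to recover the factor of $\dd$ which appears in the target bound. Since $g$ and $\M_\dd$ share the same mass by \eqref{eq0}--\eqref{eq:Mass}, one has $\int(g-\M_\dd)\,\d v = 0$, so the constant $1$ may be subtracted from the inner integrand without changing the outer integral. Using $\frac{1}{1-\dd\xi_s} - 1 = \frac{\dd\xi_s}{1-\dd\xi_s}$ this yields
$$K + 2b_\dd \;=\; -\int_{\R^3}(g-\M_\dd)\int_0^1 \frac{\dd\,\xi_s}{1-\dd\,\xi_s}\,\d s\,\d v,$$
so the prefactor $\dd$ is now explicit inside the integrand.

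For the pointwise estimate I would rely on the algebraic inequality
$$(1-\dd g)(1-\dd\M_\dd) \;\leq\; 1-\dd\,\xi_s,\qquad s\in[0,1],$$
which, after expansion, reduces to $(1-s) g + s\M_\dd \geq \dd\,g\M_\dd$; this holds because $\dd g\,\M_\dd \leq \min(g,\M_\dd)$ (using $\dd g, \dd\M_\dd \in [0,1]$) while the left-hand side is a convex combination of $g$ and $\M_\dd$. Combined with $\xi_s \leq g+\M_\dd$ and the lower bound $\min(1-\dd g,1-\dd\M_\dd) \geq \kappa_0$ (the bound for $\M_\dd$ being natural in the non-saturation regime considered), this gives
$$\frac{\dd\,\xi_s}{1-\dd\,\xi_s} \;\leq\; \frac{\dd(g+\M_\dd)}{(1-\dd g)(1-\dd\M_\dd)} \;\leq\; \frac{2\dd\,\max(\|g\|_\infty,\|\M_\dd\|_\infty)}{\kappa_0^2},$$
and integrating against $|g-\M_\dd|$ concludes. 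The main subtlety is to resist a single mean-value bound at an intermediate point $\xi \in (\M_\dd,g)$, which would yield only $\kappa_0$ in the denominator and a less symmetric numerator; using instead the product $(1-\dd g)(1-\dd\M_\dd)$ of the \emph{two} extreme values is precisely what produces the $\kappa_0^2$ and the symmetric dependence on $\max(\|g\|_\infty,\|\M_\dd\|_\infty)$ stated in the lemma.
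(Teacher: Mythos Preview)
Your proof is correct and follows the same overall strategy as the paper: both start from $K+2b_{\dd}=\dd^{-1}\int[\log(1-\dd g)-\log(1-\dd\M_{\dd})]\,\d v$, exploit the common mass of $g$ and $\M_{\dd}$ to extract the extra factor of $\dd$, and then bound the second-order remainder pointwise using $\kappa_{0}$. The only difference is in how that remainder is handled: the paper introduces $\zeta(x)=\log(1-x)+x$ and its Taylor integral representation $\zeta(x)=-\int_{0}^{x}(x-t)(1-t)^{-2}\,\d t$, then splits the difference of the two integrals; you instead parametrize the segment and invoke the neat product inequality $(1-\dd g)(1-\dd\M_{\dd})\le 1-\dd\xi_{s}$. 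Both routes land on $|K+2b_{\dd}|\le \dd\kappa_{0}^{-2}\int(g+\M_{\dd})|g-\M_{\dd}|\,\d v$ and are essentially equivalent. One small comment on your closing remark: a plain mean-value bound \emph{after} the mass-conservation subtraction would in fact give $\kappa_{0}$ rather than $\kappa_{0}^{2}$ in the denominator, which is a \emph{sharper} estimate (since $\kappa_{0}\le 1$); your product inequality is therefore not what rescues the argument, though it does no harm here.
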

\begin{proof} One has 
\begin{equation*}\begin{split}
K+2b_{\dd}&=\frac{1}{\dd}\int_{\R^{3}}\left[\log(1-\dd\,g)-\log(1-\dd\M_{\dd})+\dd(g-\M_{\dd})\right]\d v
\\&=\frac{1}{\dd}\int_{\R^{3}}\left[\zeta(\dd\,g(v))-\zeta(\dd\M_{\dd}(v))\right]\d v , 
\end{split}\end{equation*}
where we introduced the function $\zeta(x)=\log(1-x)+x$, $x >0$. Observe  that
$$\zeta(x)=-\int_{0}^{x}\frac{x-t}{(1-t)^{2}}\d t, \qquad \forall x >0, $$
so that
$$\zeta(\dd x)-\zeta(\dd y)=\dd^{2}\left(-\int_{0}^{x}\frac{x-t}{(1-\dd\,t)^{2}}\d t + \int_{0}^{y}\frac{y-t}{(1-\dd t)^{2}}\d t\right).$$
Assuming that $\min(1-\dd x,1-\dd y) \geq \kappa_{0}$ and, say, $x >y$, one gets
\begin{equation*}\begin{split}
\frac{1}{\dd}\left|\zeta(\dd x)-\zeta(\dd y)\right|&=\dd\left|\int_{0}^{y}\frac{x-y}{(1-\dd t)^{2}}\d t + \int_{y}^{x}\frac{x-t}{(1-\dd t)^{2}}\d t\right|\\
&\leq \dd\frac{x+y}{\kappa_{0}^{2}}|x-y| , \end{split}\end{equation*}
so that
$$|K+2b_{\dd}| \leq \frac{\dd}{\kappa_{0}^{2}}\int_{\R^{3}}\left|g-\M_{\dd}\right|\,(g+\M_{\dd})\d v , $$
which gives the result.\end{proof}

We deduce from the above the full proof of our main result:
\begin{proof}[Proof of Theorem \ref{theo:main-entro}] Combining \eqref{eq:lam} and \eqref{eq:LogSob} with the previous Lemma, we get
$$\mathscr{D}_{\dd}(g)\geq 2\lambda(g)\,b_{\dd}\mathcal{H}_{\dd}(g|\M_{\dd}) -\frac{12}{\kappa_{0}^{4}}\dd^{2}\lambda(g)\|g-\M_{\dd}\|_{L^{1}}^{2}\max(\|g\|_{\infty}^{2},\|\M_{\dd}\|_{\infty}^{2}) , $$
which gives the result thanks to Czizar-Kullback inequality \eqref{eq:csiszar}. \end{proof}
 
A crucial point for the long time behaviour of solutions to Landau-Fermi-Dirac equation is that $\|g\|_{\infty}$ is independent of $\dd$ (in the large time), so that we can take $\dd$ sufficiently small for the coefficient in the theorem to be positive. 

\subsection{The soft-potential case} The case $\gamma < 0$ is difficult due to the weight $\langle \cdot \rangle^{\g}$ in $\bm{F}^{(\g)}(g)$. In particular, no functional inequality in the spirit of \eqref{eq:LogSob} seems available in this case. In order to exploit the entropy production $\mathscr{D}_{\dd}(g)$ to deduce results about the long time behaviour of solutions to \eqref{LFDeq}, a possible route is to adapt the strategy initiated in \cite{TV} which consists in looking for an interpolation estimate between the entropy dissipation associated to $\gamma < 0$ and $\gamma \geq 0$.  More specifically, denoting the entropy production associated to the potential $\Psi(z)=|z|^{s+2}$ by $\mathscr{D}_{\dd}^{(s)}(g)$,
$$\mathscr{D}_{\dd}^{(s)}(g)=\frac{1}{2}\int_{\R^{3}\times\R^{3}}|v-\vet|^{s+2}\bm{\Xi}_{\dd}[g](v,\vet)\d v\d\vet , $$
with $\bm{\Xi}_{\dd}[g]$ defined by \eqref{eq:Xidd}, a use of H\"older's inequality implies that, for $0 \leq g\leq \dd^{-1}$ with $\mathcal{S}_{\dd}(g) < \infty$,
$$\mathscr{D}_{\dd}^{(0)}(g) \leq \left(\mathscr{D}_{\dd}^{(\g)}(g)\right)^{\frac{s}{s-\g}}\,\left(\mathscr{D}_{\dd}^{(s)}(g)\right)^{-\frac{\g}{s-\g}}, \qquad \forall\, \gamma < 0 \leq s\,.$$
Using Theorem \ref{theo:main-entro} for $\g=0$, one sees that
$$\mathscr{D}^{(0)}_{\dd}(g)\geq \,\bar{\lambda}_{\dd}(g)\,\mathcal{H}_{\dd}(g|\M_{\dd})\,,$$
for some function $\bar{\lambda}_{\dd}(g)$ which can be chosen to be positive under a smallness assumption on $\dd >0$ provided that $\|g\|_{\infty} \leq C < \dd^{-1}$ for some positive constant independent of $\dd$.

\noindent 
Assuming such inequality, we deduce that
$$\mathscr{D}^{(\g)}_{\dd}(g) \geq \bar{\lambda}_{\dd}(g)^{1-\frac{\g}{s}}\,  {\left(\mathscr{D}_{\dd}^{(s)}(g)\right)^{-\frac{|\g|}{s}} }\,\mathcal{H}_{\dd}(g|\M_{\dd})^{1-\frac{\g}{s}}, \qquad \g < 0 < s.$$
In the application to the equation, it is possible to prove that $C$ depends only on mass, energy, and initial entropy of $g$.  Thus, there exists an explicit $\dd_{\star} >0$ and $\lambda_{0} >0$ such that
 {$$\inf\{\bar{\lambda}_{\dd}(g)\;;\;\|g\|_{\infty} \leq C\} \geq \lambda_{0} >0, \qquad \forall \dd \in (0,\dd_{\star}].$$}
Therefore, to bound from below $\mathscr{D}_{\dd}^{(\g)}(g)$ by some power of $\mathcal{H}_{\dd}(g|\M_{\dd})$, it suffices to provide an upper bound for $\mathscr{D}^{(s)}_{\dd}(g)$, hopefully independent of $\dd$.

\smallskip
\noindent
It is not difficult to check that
 {\begin{equation}\label{Ds:est}
\mathscr{D}^{(s)}_{\dd}(g) \leq 8\,\frac{2^{s/2}}{\kappa_{0}}\,\lm_{s+2}(g)\int_{\R^{3}}\langle v \rangle^{s+2}\big| \nabla \sqrt{g}\big|^{2}\d v\,,\qquad s \geq0.
\end{equation}}
Therefore, establishing an upper bound for $\mathscr{D}_{\dd}^{(s)}(g)$ is equivalent to establish an  upper bound for the weighted Fisher information $\int_{\R^{3}}\langle v \rangle^{s+2}\big| \nabla \sqrt{g}\big|^{2}\d v$.\\

\noindent
This approach is adopted successfully for the solution to \eqref{LFDeq} in our contribution \cite{ABDL-soft} for moderate soft potentials $\gamma \in (-2,0)$.  We refer to \cite[Section 6]{ABDL-soft} for more details, but we can already point out that several major technical difficulties arise when trying to follow this approach: 
\begin{enumerate}
\item First, one needs to obtain a control of the various terms in Theorem \ref{theo:main-entro}, i.e. regularity estimates, moment estimates and related issues. All these quantities depend of course on the time $t$,
but  the \emph{a priori} estimates turn out to  
 grow moderately with time.
\item Second and more difficult, one needs to show that the solution $f(t,v)$ to \eqref{LFDeq} satisfies the lower bound \eqref{infim}.
Such bounds are derived thanks to a new approach based on a suitable level set approach inspired by De Giorgi's method for parabolic equation and introduced in \cite{ricardo} for the study of Boltzmann equation.
\item Finally, because of the upper bound \eqref{Ds:est}, some upper bound for the Fisher information with weights has to be established for solution to \eqref{LFDeq}. This can be done by adapting to the Landau-Fermi-Dirac case the recent approach of \cite{fisher}, introduced for both the Boltzmann and Landau equation.
\end{enumerate}

\section{Application to the long time behaviour of solutions for hard potentials}\label{sec:LT}

In all this Section, we consider hard potentials. We aim here to complement the results obtained in \cite{ABL} regarding the long time behaviour of the solutions to \eqref{LFDeq} in that case. We recall that in reference \cite{ABL} exponential convergence to equilibrium has been obtained for $\g\in (0,1]$  by combining a non-optimal functional inequality for $\mathscr{D}_{\dd}(f(t))$ together with a careful spectral analysis of the linearized equation.  Based on the results of the previous section, we prove that exponential convergence can be deduced \emph{directly} thanks to the entropy production appearing in Theorem \ref{theo:main-entro} which is an analogue of a proof of Cercignani's conjecture, refer to \cite{DMouVil}, for the Landau-Fermi-Dirac equation.

\subsection{Reminders of known estimates}

We recall that the following results were obtained in \cite[Theorem 3.1]{ABL}.
\begin{prop}\label{moments}
Consider {$0 < \g \leq 1$ and} $0\leq f_{\mathrm{in}}\in L^{1}_{s_{\gamma}}(\R^{3})$, with $s_{\gamma}=\max\big\{\tfrac{3\gamma}{2}+2, 4-\gamma\big\}$, satisfying \eqref{hypci} together with the normalization \eqref{eq0}--\eqref{eq:Mass}.  Let $f=f(t,v)$ be a weak solution to the LFD equation.

\medskip
\noindent
(i) Then, for any $s\geq0$
$$\int_{t_0}^{T} \int_{\R^3} |\nabla f(t,v)|^{2}\langle v\rangle^{s+\g} \, \d v \, \d t <+\infty\,,\qquad \forall\,T>t_0>0\,.$$

\medskip
\noindent
(ii) There exists some positive constant $C_{t_0}$ depending on {$H(f_{\mathrm{in}})$}, $s$ and $t_0$, but not on $\dd$, such that 
\begin{equation}\label{unif_L2n}
\int_{\R^3} \big( f(t,v)+  f^2(t,v) \big)\, \langle v\rangle^{s} \, \d v \leq C_{t_0}\,,\qquad \forall\,s\geq0\,,\;t\geq t_0>0\,.
\end{equation}
Moreover, if
\begin{equation*}
\int_{\R^3} \big( f(0,v)+  f^2(0,v) \big)\, \langle v\rangle^{s} \, \d v < \infty\,,
\end{equation*}
then $t_0=0$ is a valid choice in the estimate \eqref{unif_L2n} with a constant depending on such initial quantity.
\end{prop}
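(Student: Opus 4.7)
The plan is to establish the $L^{1}$ moment bound from (ii) first, then derive the $L^{2}$ moment bound and the spacetime gradient estimate (i) from a single weighted energy estimate, checking at every step that all constants stay independent of $\dd$. The $\dd$-uniformity ultimately rests on Lemma \ref{L2unif}, which controls the effective density $F:=f(1-\dd f)$ uniformly in $\dd$. To get the $L^{1}$ moments, I would test the weak formulation of \eqref{LFDeq} against $\psi(v)=\langle v\rangle^{s}$ and symmetrize $v\leftrightarrow \vet$; after two integrations by parts removing $\nabla f$ and $\nabla \fet$, one obtains a representation
$$\frac{\d}{\d t}\lm_s(f(t))=\frac{1}{2}\iint \Psi(v-\vet)\,f(1-\dd f)\,\fet(1-\dd \fet)\,\mathcal{K}_s(v,\vet)\,\d v\,\d\vet,$$
with $\mathcal{K}_s$ quadratic in second derivatives of $\psi$. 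Discarding the Pauli factor from above via $(1-\dd f)(1-\dd \fet)\leq 1$ and invoking a Povzner-type splitting of $\mathcal{K}_s$ leads to the same ODE inequality as in the classical Landau case. For $\g\in(0,1]$ this yields propagation, and even generation, of all polynomial moments for $t\geq t_{0}>0$, with constants depending only on $H(f_{\mathrm{in}})$, $\varrho_{\mathrm{in}}$, $E_{\mathrm{in}}$, $s$ and $t_{0}$.

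Next I would multiply \eqref{LFDeq} by $f\,\langle v\rangle^{2s}$ and integrate by parts. The principal dissipation term is
$$\iint \Psi(v-\vet)\,\Pi(v-\vet)\,\fet(1-\dd\fet)\,\nabla f\cdot\nabla f\,\langle v\rangle^{2s}\,\d v\,\d\vet,$$
and Lemma \ref{L2unif}(1) combined with the standard Desvillettes--Villani argument provides the $\dd$-uniform ellipticity estimate $\int \Psi(v-\vet)\Pi(v-\vet)\fet(1-\dd\fet)\d\vet \geq c_{0}\langle v\rangle^{\g}\,\mathrm{Id}$ on $(\R v)^{\perp}$, with $c_{0}$ depending only on $H(f_{\mathrm{in}})$. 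Lower-order contributions from $\nabla\langle v\rangle^{2s}$ and from the drift $f(1-\dd f)\nabla \fet$ are handled by Cauchy--Schwarz and Young's inequality, absorbing gradient terms into the principal one at the cost of polynomial weights already controlled by the previous step. The resulting differential inequality
$$\frac{\d}{\d t}\int f^{2}\langle v\rangle^{2s}\d v + c_{0}\int |\nabla f|^{2}\langle v\rangle^{2s+\g}\d v \leq C_{s}\Bigl(1+\int f^{2}\langle v\rangle^{2s}\d v\Bigr)$$
has $\dd$-independent constants and, integrated in time, immediately gives (i). For the pointwise-in-time $L^{2}$-moment bound of (ii), the entropy identity $\int_{0}^{T}\mathscr{D}_{\dd}(f(t))\d t \leq \mathcal{S}_{\dd}(f(T))-\mathcal{S}_{\dd}(f_{\mathrm{in}})$ forces the weighted Fisher information of $f(t)$ to be finite for a.e.\ $t$, and hence $f(t)\in L^{2}_{s}$ for a.e.\ $t$; combining this with the above ODE yields the pointwise bound for every $t\geq t_{0}>0$ via a De la Vall\'ee Poussin-type bootstrap.

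The main obstacle is the $\dd$-uniformity of the coercivity constant $c_{0}$ in the weighted $L^{2}$ step. Classically one relies on the fact that a density with fixed mass and energy and bounded Boltzmann entropy cannot concentrate on small sets, which in turn yields uniform ellipticity of the diffusion matrix. In the Fermi--Dirac framework the role of that density is played by $F=f(1-\dd f)$, which could a~priori degenerate as $f$ approaches the saturation threshold $\dd^{-1}$. Lemma \ref{L2unif} is precisely the statement that this cannot happen when $H(f_{\mathrm{in}})$ is finite: one obtains a $\dd$-uniform lower bound on the mass of $F$ over a fixed ball, which is the sole ingredient needed for the coercivity of $A[F]$, and hence for the $\dd$-uniform constants asserted in (i) and (ii).
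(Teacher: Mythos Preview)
The paper does not actually prove this proposition: it is stated as a recalled result, with the sentence ``We recall that the following results were obtained in \cite[Theorem 3.1]{ABL}'' immediately preceding it. So there is no in-paper proof to compare against; the argument lives entirely in the cited reference.

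That said, your sketch is essentially the strategy carried out in \cite{ABL}: Povzner-type moment estimates for the $L^{1}_{s}$ bounds (with generation for $\g>0$), followed by a weighted $L^{2}$ energy identity whose coercive term comes from a uniform-in-$\dd$ lower bound on the diffusion matrix built from $F=f(1-\dd f)$. You are right that the only genuinely quantum issue is the $\dd$-uniform ellipticity of that matrix, and that this is precisely what Lemma~\ref{L2unif} (non-concentration of $F$) supplies. Two small points worth tightening: the coercivity bound should read $\int \Psi(v-\vet)\Pi(v-\vet)F_{\ast}\,\d\vet \geq c_{0}\langle v\rangle^{\g}\,\mathrm{Id}$ as a full matrix inequality (after integration in $\vet$ the degeneracy along $v-\vet$ is averaged out; the matrix is genuinely elliptic, not merely on $(\R v)^{\perp}$), and the final bootstrap---using the time-integrated entropy production to land in $L^{2}_{s}$ at some positive time---should go through Sobolev embedding on $\sqrt{f}$ (giving $f\in L^{3}$) interpolated against the $L^{1}$ moments, rather than an appeal to ``De la Vall\'ee Poussin'', which is not quite the relevant mechanism here.
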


We also recall the following pointwise estimates, taken from \cite[Corollary 3.7]{ABL}.

\begin{prop}\label{cor:Linfty}
Consider $0\leq f_{\mathrm{in}}\in L^{1}_{s_{\gamma}}(\R^{3})$ satisfying \eqref{hypci} together with the normalization \eqref{eq0}--\eqref{eq:Mass} and $\g \in (0,1]$. Then, for any solution $f(t)=f_{\dd}(t)$ to \eqref{LFDeq}, it holds
$$\sup_{t \geq t_{0}}\left\|f(t)\right\|_{\infty} \leq \bar{C}_{t_{0}}\,,\qquad \forall\, t_{0}>0.$$
The constant $\bar{C}_{t_{0}}$ only depends on  $H(f_{\mathrm{in}})$ and $t_0$.

\smallskip
\noindent
Consequently, for any $t_{0} >0$ and $\bar{\kappa}_{0} \in (0,1)$, there exists $\dd_{\star}>0$ depending only on $\bar{\kappa}_{0}$, $t_{0}$  and  $H(f_{\mathrm{in}})$, such that
\begin{equation}\label{eq:lower*}
\inf_{v \in \R^{3}}\big(1-\dd\,f(t,v)\big) \geq \bar{\kappa}_{0}, \qquad \forall\, \dd \in (0,\dd_{\star}),\,   t \geq t_{0}.
\end{equation}\end{prop}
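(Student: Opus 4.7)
The second assertion follows at once from the first: given $\bar\kappa_0 \in (0,1)$ and $\bar{C}_{t_0}$ from the first part (which is independent of $\dd$), set $\dd_\star := (1-\bar\kappa_0)/\bar{C}_{t_0}$. Then for all $\dd \in (0,\dd_\star)$ and $t \geq t_0$,
$$1-\dd\,f(t,v) \geq 1 - \dd\,\|f(t)\|_\infty \geq 1 - \dd_\star\,\bar{C}_{t_0} = \bar\kappa_0.$$
So the entire content is the uniform-in-$\dd$ $L^\infty$ bound, and my plan is to obtain it via a Moser-type iteration in the spirit of the classical Landau case.

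First I would rewrite the LFD equation \eqref{LFDeq} in the non-divergence form
$$\partial_t f = \mathrm{Tr}\bigl(\bar a[f]\,D^2 f\bigr) + \bar b[f]\cdot\nabla f + \bar c[f]\,f(1-\dd f),$$
where $\bar a[f](v)=\int_{\R^3}\Psi(v-\vet)\Pi(v-\vet)\fet(1-\dd\,\fet)\d\vet$ is the diffusion matrix, and $\bar b[f],\,\bar c[f]$ are coefficients obtained by convolving $f$ against kernels derived from $\Psi$ and $\Pi$. Combining the $\dd$-uniform non-degeneracy \eqref{e0} of Lemma \ref{L2unif} with standard arguments, $\bar a[f](v)$ is uniformly elliptic on compact sets in $v$, with ellipticity constants depending only on $H(f_{\mathrm{in}})$; the coefficients $\bar b[f],\,\bar c[f]$ are controlled by the weighted moment bounds \eqref{unif_L2n} from Proposition \ref{moments}~(ii), valid for $t \geq t_0/2 > 0$.

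I would then test the equation against $(f-\kappa)_+^{p-1}\langle v\rangle^{2m}$ for suitable $\kappa > 0$, $p>1$, and a sufficiently large weight $m$. The Dirichlet-type term produced by $\bar a[f]$ controls $\|\nabla((f-\kappa)_+^{p/2}\langle v\rangle^m)\|_{L^2}^2$ up to absorbable contributions from the drift and zero-order terms (the latter remaining tame because $f(1-\dd f)\leq f$). By the Sobolev embedding $H^1(\R^3)\hookrightarrow L^6(\R^3)$, this yields an $L^p_v\to L^{3p}_v$ gain-of-integrability inequality of the form
$$\sup_{t\in[\tau_1,\tau_2]}\|(f-\kappa)_+\langle v\rangle^m\|_{L^p}^p + \int_{\tau_1}^{\tau_2}\|(f-\kappa)_+\langle v\rangle^m\|_{L^{3p}}^{3p}\,\d t \leq C(p)\int_{\tau_1/2}^{\tau_2}\|(f-\kappa)_+\langle v\rangle^m\|_{L^p}^p\,\d t,$$
with $C(p)$ of polynomial growth in $p$ and depending only on $t_0$ and $H(f_{\mathrm{in}})$. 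Iterating along $p_k=3^k$, with a simultaneous shrinkage of the time interval away from $0$, would yield $\|f(t)\|_\infty \leq \bar{C}_{t_0}$ for $t\geq t_0$, as required.

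The main obstacle is ensuring every step of the iteration is \emph{uniform in} $\dd \in (0,\dd_0]$: the trivial Pauli bound $f \leq \dd^{-1}$ diverges as $\dd\to 0$ and must be avoided. The required $\dd$-independent constants come from two ingredients already present in the excerpt: the ellipticity lower bound on $\bar a[f]$ provided by the non-degeneracy \eqref{e0} of Lemma \ref{L2unif}~(1), and the $\dd$-uniform weighted $L^1\cap L^2$ moments \eqref{unif_L2n} from Proposition \ref{moments}~(ii). Both are valid without any smallness assumption on $\dd$, so the iteration constants can be controlled solely by $t_0$ and $H(f_{\mathrm{in}})$, yielding the desired uniform-in-$\dd$ $L^\infty$ estimate and, through the elementary calculation above, the pointwise bound on $1-\dd f$.
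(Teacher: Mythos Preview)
The paper does not prove this proposition: it is quoted verbatim from \cite[Corollary~3.7]{ABL}, so there is no ``paper's own proof'' to compare against. Your derivation of the second assertion from the first is exactly the (trivial) argument one would use, and your outline for the first assertion---a Moser-type iteration exploiting the $\dd$-uniform ellipticity of the diffusion matrix (via Lemma~\ref{L2unif}) together with the $\dd$-uniform weighted $L^1\cap L^2$ bounds of Proposition~\ref{moments}---is indeed the strategy carried out in \cite{ABL}.

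Two minor remarks on the sketch. First, the displayed gain-of-integrability inequality is not quite in the form that iterates cleanly: Sobolev embedding on $(f-\kappa)_+^{p/2}$ yields control of $\int\|(f-\kappa)_+\|_{L^{3p}}^{p}\,\d t$, not of $\int\|(f-\kappa)_+\|_{L^{3p}}^{3p}\,\d t$; the iteration in \cite{ABL} therefore proceeds with the intermediate exponent $p_{k+1}=\tfrac{5}{3}p_k$ (interpolating the sup-in-time $L^{p}$ bound with the time-integrated $L^{3p}$ bound) rather than $p_{k+1}=3p_k$. Second, for hard potentials the diffusion matrix $\bar a[f]$ is not uniformly elliptic in the usual sense but has anisotropic degeneracy at infinity (eigenvalues $\sim\langle v\rangle^{\gamma+2}$ in tangential directions, $\sim\langle v\rangle^{\gamma}$ in the radial one); the weighted coercivity estimate must accommodate this, which is handled in \cite{ABL} but not visible in your sketch. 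Neither point is a genuine gap---the strategy is correct---but both are needed to turn the outline into a proof.
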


\subsection{Long time behaviour}

 We first observe that for any  $g \in \mathcal{Y}_{\dd}(f_{\mathrm{in}})$, the quantity $\lambda(g)$ is bounded below by a strictly positive constant depending only on $\bm{m}_{2+\gamma}(g)$ and $\|g\|_{L^{2}_{2+\g}}$. This is a consequence of the two following lemmas.
 \begin{lem} \label{nl1}
Let $\gamma \in [0,1]$. 
 For any  $g \in \mathcal{Y}_{\dd}(f_{\mathrm{in}}) \cap L^{2}_{2+\g}(\R^{3})$ satisfying the normalization conditions \eqref{eq0}--\eqref{eq:Mass}, it holds that
\begin{equation}\label{e00}
\mathscr{I}_{\gamma}(g)   \le  c_{\gamma} \, \left(1 + \bm{m}_{2+\gamma}(g) + \left\|g\right\|_{L^2_{2+\g}}\right) , 
\end{equation}
for some $c_\gamma>0$ depending only on $\gamma$.
\end{lem}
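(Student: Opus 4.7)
The plan is to estimate $\langle v\rangle^\gamma\int_{\R^3}g(w)|v-w|^{-\gamma}\langle w\rangle^{2}\d w$ uniformly in $v$ by first transferring the weight $\langle v\rangle^\gamma$ under the integral by a Peetre-type inequality, and then splitting the $|v-w|^{-\gamma}$ singularity at the unit ball centered at $v$.

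First I would observe that, for $\gamma\in[0,1]$, the triangle inequality $|v|\leq|w|+|v-w|$ gives $\langle v\rangle^{2}\leq 2\langle w\rangle^{2}+2\langle v-w\rangle^{2}$ and hence $\langle v\rangle\leq\sqrt{2}\bigl(\langle w\rangle+\langle v-w\rangle\bigr)$, so by subadditivity of $x\mapsto x^\gamma$,
$$\langle v\rangle^{\gamma}\leq 2^{\gamma/2}\bigl(\langle w\rangle^{\gamma}+\langle v-w\rangle^{\gamma}\bigr),\qquad \gamma\in[0,1].$$
Inserting this in the definition of $\mathscr{I}_\gamma(g)$ produces two supremum integrals:
$$\mathscr{I}_{\gamma}(g)\leq 2^{\gamma/2}\sup_{v}\!\int g(w)\,\langle w\rangle^{2+\gamma}\,|v-w|^{-\gamma}\,\d w\;+\;2^{\gamma/2}\sup_{v}\!\int g(w)\,\langle w\rangle^{2}\,\frac{\langle v-w\rangle^{\gamma}}{|v-w|^{\gamma}}\,\d w.$$

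Next I would handle each of these by splitting the domain along $\{|v-w|\leq 1\}$ and $\{|v-w|>1\}$. On the far region both $|v-w|^{-\gamma}\leq 1$ and $\langle v-w\rangle^{\gamma}|v-w|^{-\gamma}\leq 2^{\gamma/2}$ are bounded, reducing those pieces to pure moments controlled by $\lm_{2+\gamma}(g)$ and $\lm_{2}(g)=4$ respectively (using the normalization \eqref{eq:Mass}). On the near region $\{|v-w|\leq 1\}$ the factor $\langle v-w\rangle^{\gamma}$ is bounded by $2^{\gamma/2}$, and — crucially using $\gamma\leq 1$ so that $2\gamma<3$ — the singular kernel satisfies $\||v-\cdot|^{-\gamma}\|_{L^{2}(B(v,1))}\leq C_{\gamma}$ with $C_\gamma$ depending only on $\gamma$. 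Cauchy–Schwarz then yields
$$\int_{|v-w|\leq 1}g(w)\,\langle w\rangle^{2+\gamma}\,|v-w|^{-\gamma}\,\d w\;\leq\;C_{\gamma}\,\|g\|_{L^{2}_{2+\gamma}},$$
and the analogous bound with $\langle w\rangle^{2}$ in place of $\langle w\rangle^{2+\gamma}$ is controlled by $C_\gamma\|g\|_{L^{2}_{2}}\leq C_\gamma\|g\|_{L^{2}_{2+\gamma}}$.

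Summing the four contributions gives $\mathscr{I}_\gamma(g)\leq c_\gamma\bigl(1+\lm_{2+\gamma}(g)+\|g\|_{L^{2}_{2+\gamma}}\bigr)$ for an explicit $c_\gamma$ depending only on $\gamma$, which is exactly \eqref{e00}. There is no real obstacle in this proof: the only point that requires care is the restriction $\gamma\in[0,1]$, which enters twice — once through the subadditivity step, and once through the local $L^2$-integrability of $|v-w|^{-\gamma}$ — but both are easily verified and the rest is a bookkeeping of constants.
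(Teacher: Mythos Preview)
Your proof is correct and follows essentially the same strategy as the paper: transfer the weight $\langle v\rangle^{\gamma}$ onto $w$, then split at the unit ball $\{|v-w|\le 1\}$ and control the near part by Cauchy--Schwarz using $|\cdot|^{-\gamma}\in L^{2}_{\mathrm{loc}}$ for $\gamma<\tfrac{3}{2}$. The only difference is in the first step: the paper handles the weight by the geometric decomposition $\{|v-w|\le \tfrac{|v|}{2}\}\cup\{|v-w|\ge\tfrac{|v|}{2},\,|v|\le 1\}\cup\{|v-w|\ge\tfrac{|v|}{2},\,|v|\ge 1\}$ (on the first set $\langle v\rangle^{\gamma}\lesssim\langle w\rangle^{\gamma}$, on the third the kernel itself absorbs $\langle v\rangle^{\gamma}$), whereas you use the Peetre-type bound $\langle v\rangle^{\gamma}\le 2^{\gamma/2}(\langle w\rangle^{\gamma}+\langle v-w\rangle^{\gamma})$. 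Your route is slightly more direct and avoids the three-way split; the paper's route, on the other hand, does not rely on the subadditivity of $x\mapsto x^{\gamma}$ and would extend more easily beyond $\gamma\le 1$ if needed. In either case the decisive restriction is $2\gamma<3$ for the local $L^{2}$-integrability of the singular kernel, and the final estimate is the same.
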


\begin{proof}
For all $v \in \R^3$, we split the integration in $\langle v\rangle^{\g}\int_{\R^{3}}g(w)|v-w|^{-\g}\langle w\rangle^{2}\d w$ according to the regions $\{|v-w| \leq \tfrac{|v|}{2}\}$ and $$
\{|v-w| \geq \tfrac{|v|}{2}\}=\{|v-w| \geq \tfrac{|v|}{2}\;;\;|v| \leq 1\} \cup \{|v-w| \geq \tfrac{|v|}{2}\;;\;|v| \geq 1\}$$ to get
\begin{multline*}
\langle v\rangle^{\g}\int_{\R^{3}}g(w)|v-w|^{-\g}\langle w\rangle^{2}\d w
\le \langle v\rangle^\gamma \int_{|v |\le 2 |w| }  g(w) |v-w|^{-\gamma} \langle w\rangle^2\, \d w\\
+2^{\frac{\gamma}{2}}  \int_{\R^{3}}  g(w) |v-w|^{-\gamma} \langle w\rangle^2\, \d w + \langle v\rangle^{\gamma}\int_{\{|v - w|\geq \tfrac{|v|}{2}\,;\, |v| \ge 1\}}  g(w) \left(\frac{|v|}{2}\right)^{- \gamma} \langle w\rangle^2\, \d w\\
\leq 2^{\g}\int_{\R^{3}}g(w)|v-w|^{-\g}\langle w\rangle^{2+\g}\d w+2^{\frac{\g}{2}}\int_{\R^{3}}g(w)|v-w|^{-\g}\langle w\rangle^{2}\d w \\
+ 2^{\frac{3}{2}\g}\underset{=4}{\underbrace{\int_{\R^{3}}g(w)\langle w\rangle^{2}\d w}}.\end{multline*}
Each of the first two integrals can be bounded by $\int_{\R^{3}}g(w)|v-w|^{-\g}\langle w\rangle^{2+\g}\d w$ since $\g \geq0.$ Therefore, using the rough estimate $2^{\g}+2^{\frac{\g}{2}} \leq 2^{\g+1}$, we end up with
\begin{multline*}
\langle v\rangle^{\g}\int_{\R^{3}}g(w)|v-w|^{-\g}\langle w\rangle^{2}\d w
 \le 2^{\frac{3}{2}\gamma+2} + 2^{\gamma +1}  \int_{|v - w|\ge 1}  g(w)\, |v-w|^{-\gamma}  \,\langle w\rangle^{2+ \gamma}\, \d w \\
+ 2^{\gamma + 1}  \int_{|v - w|\le 1}  g(w)\, |v-w|^{-\gamma}  \,\langle w\rangle^{2+ \gamma}\, \d w.\end{multline*}
As a consequence,
 {$$ \mathscr{I}_{\gamma}(g)  \le 2^{\gamma + 1}  \,(2^{\frac{\g}{2}+1}+ \bm{m}_{2+\gamma}(g)) + 2^{\gamma +1}  \, \left\|g \langle \cdot\rangle^{2+\gamma}
* (|\cdot|^{-\gamma} 1_{|\cdot| \le 1})\right\|_{L^{\infty}} $$}
and the result follows from Young's convolution inequality since $\int_{|v| \leq1}|v|^{-2\g}\d v  < \infty$ for $\g \in (0,\frac{3}{2})$.
\end{proof}
We now have the equivalent of the bounds in Remarks \ref{rem:Bij1} and \ref{rem:Bij2} for $\g \in [0,1]$.
\begin{lem} \label{nl2}
Let $\gamma \in [0,1]$. 
 For any  $g \in \mathcal{Y}_{\dd}(f_{\mathrm{in}}) \cap L^{2}(\R^{3})$, it holds that
\begin{equation}\label{e000}
\frac{1}{\bm{B}_{\g}}:=\min_{i\neq j}\inf_{\sigma \in \S^{1}}\int_{\R^{3}}\left|\sigma_{1}\frac{v_{i}}{\langle v\rangle}-\sigma_{2}\frac{v_{j}}{\langle v\rangle}\right|^{2}g(v)\d v, \qquad \frac{1}{\bm{e}_{\g}}=\min_{i}\frac{1}{3}\int_{\R^{3}} g(v)\,v_{i}^2\,\d v
\end{equation}
are bounded below by a strictly positive constant depending only on $\|g\|_{L^2}$.
\end{lem}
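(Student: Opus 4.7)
The plan is to observe that both quantities $1/\bm{B}_{\g}$ and $1/\bm{e}_{\g}$ are infima of integrals of the form $\int g(v)\,P(v)\,\d v$, where $P\ge 0$ is a quadratic quantity that only vanishes on a (low-dimensional) affine subspace of $\R^3$. The strategy in both cases is the same: on the one hand the energy constraint forces most of the mass of $g$ to lie in a ball, on the other hand an $L^2$-bound on $g$ prevents it from concentrating on a thin slab around $\{P=0\}$. Combining both facts will produce a substantial part of the mass on which $P$ is bounded below, yielding a uniform positive lower bound.

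More precisely, I would first use the normalization \eqref{eq0}--\eqref{eq:Mass}, which gives $\int g=1$ and $\int g|v|^2=3$, together with Chebyshev's inequality to get
\[
\int_{|v|\le R} g(v)\,\d v \ge 1-\frac{3}{R^2},\qquad \forall R>0.
\]
Next, for any linear form $L(v)=\s_1 v_i -\s_2 v_j$ with $(\s_1,\s_2)\in\S^1$ (and likewise for $L(v)=v_i$), the set $S_{\eta,R}:=\{v\in\R^3\,;\,|v|\le R,\;|L(v)|\le \eta\}$ is a slab of width $2\eta$ intersected with $B_R$, so $|S_{\eta,R}|\le C\,\eta R^2$ for a universal constant $C$. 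Then Cauchy--Schwarz yields
\[
\int_{S_{\eta,R}}g(v)\,\d v\le \|g\|_{L^2}\,|S_{\eta,R}|^{1/2}\le C'\|g\|_{L^2}\sqrt{\eta}\,R,
\]
so that
\[
\int_{\{|v|\le R,\,|L(v)|\ge \eta\}} g(v)\,\d v\;\ge\; 1-\frac{3}{R^2}-C'\|g\|_{L^2}\sqrt{\eta}\,R.
\]
Choosing $R$ large enough (say $R=2\sqrt3$) and then $\eta$ small enough (depending only on $\|g\|_{L^2}$) to make the right-hand side $\ge \tfrac12$, we obtain a set of $g$-mass $\ge \tfrac12$ on which $|L(v)|\ge \eta$ and $|v|\le R$.

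Finally I would apply this to the two quantities separately: for $\bm{e}_{\g}$, use $L(v)=v_i$ and the bound $v_i^2\ge \eta^2$ on the good set to deduce $\int g\,v_i^2\,\d v\ge \eta^2/2$; for $\bm{B}_{\g}$, use $L(v)=\s_1 v_i-\s_2 v_j$ and the additional factor $\langle v\rangle^{-2}\ge (1+R^2)^{-1}$ on the good set to deduce $\int g(v)|L(v)|^2\langle v\rangle^{-2}\d v \ge \eta^2/(2(1+R^2))$. Taking the infimum over $\s\in \S^1$ and the minimum over indices $i\ne j$ (both being finite procedures since the bound obtained is independent of these parameters) produces the claimed lower bounds, which depend only on $\|g\|_{L^2}$. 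There is no real obstacle here; the one technical point that requires some care is checking that the slab-measure estimate is uniform in $(\s_1,\s_2)\in\S^1$, which follows immediately since $L$ is $1$-Lipschitz on $\R^3$ for any unit $\s$.
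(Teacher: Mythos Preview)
Your proposal is correct and follows essentially the same approach as the paper: both proofs confine most of the mass to a ball (you via Chebyshev and the energy constraint, the paper via Lemma~\ref{L2unif}), then use Cauchy--Schwarz with $\|g\|_{L^2}$ to control the $g$-mass of the thin slab $\{|L(v)|\le\tau\}\cap B_R$ via its Lebesgue measure, and finally choose $\tau$ small enough. The only cosmetic difference is that the paper observes $\tfrac{1}{\bm{e}_{\g}}\ge \min_i\tfrac13\int g\,|v_i/\langle v\rangle|^2\,\d v$, thereby reducing both quantities to the single $\bm{B}_{\g}$-type integral, whereas you run the slab argument twice; either way the content is the same.
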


\begin{proof}
We observe first that 
$$  \frac{1}{\bm{e}_{\g}} \ge \min_{i} \frac13 \int_{\R^3} g(v)\, \left| \frac{v_{i}}{\langle v\rangle} \right|^2 \, \d v ,$$
so that it is sufficient to bound below $\frac{1}{\bm{B}_{\g}}$. 
\par
Then for all $\tau \ge 0$ and $\sigma \in \S^{1}$, with the notations of Lemma \ref{L2unif} \textit{(1)},
\begin{multline*}
\int_{\R^{3}}\left|\sigma_{1}\frac{v_{i}}{\langle v\rangle}-\sigma_{2}\frac{v_{j}}{\langle v\rangle}\right|^{2}g(v)\d v  \ge 
\tau^2  \int_{\R^{3}}  \ind_{\left\{  \left|\sigma_{1}\frac{v_{i}}{\langle v\rangle}-\sigma_{2}\frac{v_{j}}{\langle v\rangle}\right| \ge \tau \right\} } 
\, \, g(v)\d v  \\
 \ge  \tau^2 \left\{ \int_{|v| \le R(f_{\mathrm{in}})} g(v)\,\d v -  \int_{|v| \le R(f_{\mathrm{in}})}  \ind_{\left\{  \left|\sigma_{1}\frac{v_{i}}{\langle v\rangle}-\sigma_{2}\frac{v_{j}}{\langle v\rangle}\right| \leq \tau \right\} } \, \, g(v)\d v \right\} \\
   \ge  \tau^2 \left\{ \eta(f_{\mathrm{in}}) - \|g\|_{L^2} \, \left|B(0, R(f_{\mathrm{in}})) \cap \left\{ v \in \R^3, \,\, \left| \sigma_{1}\frac{v_{i}}{\langle v\rangle}-\sigma_{2}\frac{v_{j}}{\langle v\rangle} \right|  \le \tau \right\} \right|^{1/2}  \right\} .\end{multline*}
This last quantity is bounded below by a strictly positive constant, uniformly with respect to $\sigma$ and $i \neq j$, when $\tau$ is small enough.
\end{proof}
In that case, Theorem \ref{theo:main-entro} can be improved as following:
\begin{cor}\label{cor:entroLe} Assume that $\g \in [0,1]$ and   $g  \in \mathcal{Y}_{\dd}(f_{\mathrm{in}}) \cap L^{2}_{2+\g}(\R^{3})$ satisfies   the normalization  conditions \eqref{eq0}--\eqref{eq:Mass}.
Then, there exists a positive constant $C_{0}[g]>0$ depending only on $\kappa_{0}$ and some (upper bounds on) $\lm_{2+\g}(g)$ and  $\|g\|_{L^{2}_{2+\g}}$, such that
$$\mathscr{D}_{\dd}(g) \geq C_{0}[g]\left[b_{\dd}-\frac{12\dd^{2}}{\kappa_{0}^{4}}\max(\|g\|_{\infty}^{2},\|\M_{\dd}\|_{\infty}^{2})\right]
\mathcal{H}_{\dd}(g|\M_{\dd}).$$
\end{cor}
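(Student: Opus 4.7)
The plan is to invoke Theorem \ref{theo:main-entro} directly and then show that the prefactor $\lambda(g)$ appearing in \eqref{eq:lambdag} admits a lower bound depending \emph{only} on $\kappa_0$, $\lm_{2+\gamma}(g)$ and $\|g\|_{L^2_{2+\gamma}}$. Since Theorem \ref{theo:main-entro} already provides
\[
\mathscr{D}_{\dd}(g) \ge 2\lambda(g)\left[b_{\dd}-\frac{12\dd^{2}}{\kappa_{0}^{4}}\max(\|g\|_{\infty}^{2},\|\M_{\dd}\|_{\infty}^{2})\right]\mathcal{H}_{\dd}(g|\M_{\dd}),
\]
it suffices to prove a uniform lower bound $\lambda(g) \ge C_0[g]/2$ for some constant $C_0[g]$ of the stated form, and then set $C_0[g]$ accordingly.

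Unpacking \eqref{eq:lambdag}, the inverse $1/\lambda(g)$ is a product of four factors: $\bm{e}_{\g}^3$, $\max(1,\bm{B}_{\g})$, $\max(1,\lm_{2+\gamma}(g))$ and $\mathscr{I}_\gamma(g)$. I would handle them in turn. The factor $\max(1,\lm_{2+\gamma}(g))$ is trivially bounded in terms of the quantity listed in the statement. For $\mathscr{I}_\gamma(g)$, Lemma \ref{nl1} gives $\mathscr{I}_\gamma(g) \le c_\gamma(1+\lm_{2+\gamma}(g)+\|g\|_{L^2_{2+\gamma}})$, which is exactly of the required form. Finally, for $\bm{e}_\gamma$ and $\bm{B}_\gamma$, Lemma \ref{nl2} shows that both admit upper bounds depending only on $\|g\|_{L^2}$; since $\|g\|_{L^2}\le \|g\|_{L^2_{2+\gamma}}$ (as $\gamma \ge 0$), these bounds translate into bounds depending only on $\|g\|_{L^2_{2+\gamma}}$. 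The main input behind Lemma \ref{nl2} is the concentration estimate \eqref{e0} provided by Lemma \ref{L2unif}\textit{(1)}, which furnishes a mass $\eta(f_{\mathrm{in}})$ inside a ball $B(0,R(f_{\mathrm{in}}))$, and then a Cauchy--Schwarz argument to subtract the contribution of the degeneracy set $\{|\sigma_1 v_i/\langle v\rangle-\sigma_2 v_j/\langle v\rangle|\le \tau\}$ using $\|g\|_{L^2}$.

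Multiplying these four bounds together, one obtains an upper bound on $1/\lambda(g)$ that depends only on $\kappa_0$, $\lm_{2+\gamma}(g)$ and $\|g\|_{L^2_{2+\gamma}}$, hence a positive lower bound $\lambda(g) \ge \tfrac12 C_0[g]$ of the form claimed. Substituting into Theorem \ref{theo:main-entro} yields the corollary. There is no genuine obstacle here: all the analytical work has been carried out in Lemmas \ref{nl1}, \ref{nl2} and Theorem \ref{theo:main-entro}; the only care needed is to track the dependencies and to verify that the quantities $R(f_{\mathrm{in}})$ and $\eta(f_{\mathrm{in}})$ appearing through Lemma \ref{L2unif} depend only on $H(f_{\mathrm{in}})$ and are therefore already controlled by the class $\mathcal{Y}_{\dd}(f_{\mathrm{in}})$.
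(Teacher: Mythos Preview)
Your proposal is correct and follows exactly the paper's own argument: invoke Theorem \ref{theo:main-entro} and then bound $\lambda(g)$ from below via Lemmas \ref{nl1} and \ref{nl2}. The paper's proof is stated in one sentence to the same effect; your version simply spells out the dependencies more explicitly, including the observation that the $H(f_{\mathrm{in}})$-dependent constants from Lemma \ref{L2unif} are fixed once the class $\mathcal{Y}_{\dd}(f_{\mathrm{in}})$ is given.
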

\begin{proof} The proof is a simple application of Theorem \ref{theo:main-entro} once we notice that, thanks to the two previous Lemmas, it is possible to bound $\lambda(g)$ from below by a positive constant depending on $\kappa_{0},\|g\|_{L^{2}_{2+\g}}$ and $\lm_{2+\g}(g)$. 
\end{proof}

We now are in a position to give a direct proof of the exponential decay of the relative entropy drastically improving \cite[Theorem 6.10]{ABL}.

\begin{proof}[Proof of Theorem \ref{cvex}] The proof is a simple combination of Corollary \ref{cor:entroLe}  together with Propositions \ref{moments} and \ref{cor:Linfty}. Namely, for $t_{0} >0$ and $\bar{\kappa}_{0} >0$,  the solution $f(t)$ to \eqref{LFDeq} satisfies the lower bound \eqref{eq:lower*} for any $t >t_{0}$ and any $\dd \in (0,\dd_{\star})$. Moreover, according to Proposition \ref{moments}, $\sup_{t \geq t_{0}}\lm_{2+\g}(f(t)) \leq C_{t_{0}}$ and $\sup_{t\geq t_{0}}\|f(t)\|_{L^{2}_{2+\g}} \leq C_{t_{0}}.$ Consequently, with the notations of Corollary \ref{cor:entroLe}, one has
$$\inf_{t\geq t_{0}}C_{0}[f(t)]=:\nu >0.$$
Now, using again Proposition \ref{cor:Linfty}, $\sup_{t\geq t_{0}}\|f(t)\|_{L^{\infty}} \leq \bar{C}_{t_{0}}$.  Moreover, according to \cite[Lemma A.1]{ABL}, we know that $b_{\dd} >\frac{1}{8}$  {for any $\dd \le \overline{\dd}=\left(\frac25\right)^{\frac52}(18\pi)^{\frac32}$} and there is a universal numerical constant $c_{0} >0$ such that $\sup_{\dd \in  {(0,\overline{\dd})}}\|\M_{\dd}\|_{\infty} \leq c_{0}$\footnote{Namely, $\|\M_{\dd}\|_{L^{\infty}} \leq a_{\dd} \leq \frac{5}{3}\left(\frac{5}{18\pi}\right)^{\frac{3}{2}}$ {for any $\dd\le \overline{\dd}$} according to \cite[Lemma A.1]{ABL}.} , one can choose  { $\dd^{\dagger}\in(0,\overline{\dd})$} such that, say,
$$\frac{1}{8}-\frac{12\dd^{2}}{\bar{\kappa}_{0}^{4}}\max\left(\bar{C}_{t_{0}}^{2},c_{0}^{2}\right) >\frac{1}{16}, \qquad \forall \dd \in (0,\dd^{\dagger}).$$
Setting now $\mu:=\frac{1}{16}\nu$, one deduces from Corollary \eqref{cor:entroLe} that
$$\mathscr{D}_{\dd}(f(t)) \geq \mu\,\mathcal{H}_{\dd}(f(t)|\M_{\dd}) \qquad \forall t \geq t_{0}.$$
Recall now that
$$\dfrac{\d}{\d t}\mathcal{H}_{\dd}(f(t)|\M_{\dd})=-\mathscr{D}_{\dd}(f(t)), \qquad \forall t \geq0,$$
so that we deduce after integration that
$$\mathcal{H}_{\dd}(f(t)|\M_{\dd}) \leq \mathcal{H}_{\dd}(f(t_{0})|\M_{\dd})\exp\left(-\mu(t-t_{0})\right), \qquad \forall t \geq t_{0}, $$
and the result follows since $\mathcal{H}_{\dd}(f_{t_{0}}|\M_{\dd}) \leq \mathcal{H}_{\dd}(f_{\mathrm{in}}|\M_{\dd}).$
\end{proof}

\end{document}